\renewcommand{\d}{{\rm d}}
\newcommand{\e}{{\rm e}}
\renewcommand{\k}{{\bf k}}
\newcommand{\E}{{\mathbb E}}
\renewcommand{\P}{{\mathbb P}}
\newcommand{\Q}{{\mathbb Q}}
\newcommand{\R}{{\mathbb R}}
\renewcommand{\S}{{\mathbb S}}
\newcommand{\N}{{\mathbb N}}
\newcommand{\Ccal}{{\mathcal C}}
\newcommand{\Ecal}{{\mathcal E}}
\newcommand{\Fcal}{{\mathcal F}}
\newcommand{\Gcal}{{\mathcal G}}
\newcommand{\Ical}{{\mathcal I}}
\newcommand{\Pcal}{{\mathcal P}}
\newcommand{\Qcal}{{\mathcal Q}}
\newcommand{\Rcal}{{\mathcal R}}
\newcommand{\Vcal}{{\mathcal V}}
\newcommand{\Xcal}{{\mathcal X}}
\newcommand{\Ycal}{{\mathcal Y}}
\newcommand{\Mid}{{\ \Big|\ }}
\newcommand{\1}[1]{{\boldsymbol 1_{\{#1\}}}}
\newcommand{\oo}{{\bf 1}}
\newcommand{\Rho}{{\rm P}}
\newcommand{\Eta}{{\rm H}}
\newcommand\vvec{\mathaccent"017E }
\newcommand{\Pol}{{\rm Pol}}
\newcommand{\Id}{{\mathrm{Id}}}
\DeclareMathOperator{\tr}{Tr}
\DeclareMathOperator{\Diag}{Diag}
\newtheorem{theorem}{Theorem}
\newtheorem{corollary}[theorem]{Corollary}
\newtheorem{definition}[theorem]{Definition}
\newtheorem{example}[theorem]{Example}
\newtheorem{lemma}[theorem]{Lemma}
\newtheorem{proposition}[theorem]{Proposition}
\newtheorem{remark}[theorem]{Remark}
\numberwithin{equation}{section}
\numberwithin{theorem}{section}
\begin{document}

\title{Polynomial   Diffusions and Applications in Finance\footnote{The authors wish to thank Damien Ackerer, Peter Glynn, Kostas Kardaras, Guillermo Mantilla-Soler, Sergio Pulido, Mykhaylo Shkolnikov, Jordan Stoyanov and Josef Teichmann for useful comments and stimulating discussions. Thanks are also due to the referees, co-editor, and editor for their valuable remarks. The research leading to these results has received funding from the European Research Council under the European Union's Seventh Framework Programme (FP/2007-2013) / ERC Grant Agreement n.~307465-POLYTE.}}
\author{Damir Filipovi\'c\thanks{EPFL and Swiss Finance Institute, Quartier UNIL-Dorigny, Extranef 218, 1015 Lausanne, Switzerland, email: damir.filipovic@epfl.ch} \quad\quad  Martin Larsson\thanks{ETH Zurich, Department of Mathematics, R\"amistrasse 101, CH-8092, Zurich, Switzerland, email: martin.larsson@math.ethz.ch}}
\date{March 13, 2016\\forthcoming in \emph{Finance and Stochastics}}



\maketitle

\begin{abstract}
This paper provides the mathematical foundation for polynomial   diffusions. They play an important role in a growing range of applications in finance, including financial market models for interest rates, credit risk, stochastic volatility, commodities and electricity. Uniqueness of polynomial   diffusions is established via moment determinacy in combination with pathwise uniqueness. Existence boils down to a stochastic invariance problem that we solve for semialgebraic state spaces. Examples include the unit ball, the product of the unit cube and nonnegative orthant, and the unit simplex.
\\[2ex]
\noindent{\textbf {Keywords:}} Polynomial   Diffusions, Polynomial Diffusion Models in Finance, Stochastic Invariance, Boundary Attainment, Moment Problem
\\[2ex]
\noindent{\textbf {MSC (2010) Classification:} 60H30, 60J60} 
\\[2ex]
\noindent{\textbf {JEL Classification:} G12, G13} 
\end{abstract}

\section{Introduction}

This paper provides the mathematical foundation for polynomial   diffusions on a large class of state spaces in $\R^d$. A polynomial   diffusion is characterized by having a linear drift and quadratic diffusion function. In consequence, moments are given in closed form. Such processes represent an extension of the affine class. They play an important role in a growing range of applications in finance, including financial market models of interest rates, credit risk, stochastic volatility, and commodities and electricity.

An arbitrage-free financial market model is determined by a state price density, i.e.~a positive semimartingale $\zeta$ defined on a filtered probability space $(\Omega,\Fcal,\Fcal_t,\P)$. The model price $\Pi(t,T)$ at time $t$ of any time $T$ cash-flow $C_T$ is given by
\begin{equation}\label{eq:PitTNEW}
\Pi(t,T) = \frac{1}{\zeta_t} \E\left[ \zeta_T\, C_T\mid\Fcal_t\right].
\end{equation}
We may interpret $\P$ as the historical measure, or more generally, as an auxiliary measure possibly different from, but equivalent to, the historical measure. A {\em polynomial diffusion model} consists of a polynomial   diffusion $X$ as factor process, along with a positive polynomial $p$ on the state space. The state price density is specified by $\zeta_t = \e^{-\alpha t}p(X_t)$, where $\alpha$ is a real parameter chosen to control the lower bound on implied interest rates. We let the time $T$ cash-flow of a security be given by $C_T=q(X_T)$ for some polynomial~$q$. The polynomial   property of $X$ along with the elementary fact that $p q$ is a polynomial implies that $\Pi(t,T)$ becomes a rational function in $X_t$ with coefficients given in closed form in terms of a matrix exponential. Polynomial diffusion models thus yield closed form expressions for any security with cash-flows  specified as polynomial functions of $X$, which makes them universally applicable in finance. This includes financial market models for interest rates (with $C_T=1$), credit risk in a doubly stochastic framework (with $C_T$ the conditional survival probability), stochastic volatility (with $C_T$ the spot variance), and commodities and electricity (with $C_T$ the spot price).

While polynomial   diffusions have appeared in the literature since \cite{Wong:1964}, so far no existence and uniqueness theory has been available beyond the scalar case. This paper fills this gap and thus provides the mathematical foundation for polynomial diffusion models in finance.

Our main uniqueness result (Theorem~\ref{TNEW:unique}) is based on the classical theory of the moment problem. Since the mixed moments of all finite-dimensional marginal distributions of a polynomial   diffusion are uniquely determined by its generator (Theorem~\ref{T:moments} and Corollary~\ref{C:moments}), uniqueness follows whenever these moments determine the underlying distribution. This is often true, for instance in the affine case or when the state space is compact, or more generally if exponential moments exist; Theorem~\ref{T:expmom} provides sufficient conditions. There are, however, situations where the moment problem approach fails. We therefore provide two additional results based on Yamada--Watanabe type arguments, which give uniqueness in the one-dimensional case (Theorem~\ref{T:unique11}) as well as when the process dynamics exhibits a certain hierarchical structure (Theorem~\ref{T:unique2}). These uniqueness results do not depend on the geometry of the state space.

In order to study existence, we assume that the state space is a basic closed semialgebraic set, i.e.~the nonnegativity set of a finite family of polynomials. Existence reduces to a stochastic invariance problem that we solve under suitable geometric and algebraic conditions on the state space (Theorem~\ref{T:existence}). We also study boundary attainment. In applications it is frequently of interest to know whether the trajectories of a given process may hit the boundary of the state space. In particular, simulating trajectories becomes a much more delicate task if the boundary is attained; see \citet{Lord/Koekkoek/Dijk:2010}. We present sufficient conditions for both attainment and non-attainment that are tight (Theorem~\ref{T:boundary}).

A semialgebraic state space is a natural choice for at least three reasons. First, positive semidefiniteness of the quadratic diffusion matrix boils down to nonnegativity constraints on polynomials. Second, polynomial diffusion models in finance involve polynomials that are required to be positive on the state space. And third, semialgebraic sets turn out to be an ideal setting for employing tools from real algebraic geometry to verify the hypotheses of our existence and boundary attainment results.

We give a detailed analysis of some specific semialgebraic state spaces that do and will play an important role in financial applications, and that illustrate the scope of polynomial   diffusions. Specifically, we consider certain quadric sets including the unit ball $\{x\in\R^d: \|x\|\le 1\}$; the product space $[0,1]^m\times\R^n_+$; and the unit simplex $\{x\in\R^d_+: x_1+\cdots+x_d=1\}$. We also elaborate on polynomial diffusion models in finance, and show how to specify novel stochastic models for interest rates, stochastic volatility, and stock markets.

Polynomial   processes have been studied in various degrees of generality by several authors, for instance \cite{Wong:1964}, \cite{Mazet:1997}, \cite{Zhou2003}, \cite{Forman/Sorensen:2008}, among others. The first systematic accounts treating the time-homogeneous Markov jump-diffusion case are \cite{Cuchiero:2011} and \cite{Cuchiero/etal:2012}. The use of polynomial   diffusions in financial modeling goes back at least to the early 2000s. \cite{Zhou2003} used one-dimensional polynomial   (jump-)diffusions to build short rate models that were estimated to data using a generalized method-of-moments approach, relying crucially on the ability to compute moments efficiently. A short rate model based on the Jacobi process was presented by~\cite{Delbaen/Shirakawa:2002}, and \cite{Larsen/Sorensen:2007} used the same process for exchange rate modeling. The multi-dimensional Jacobi process was studied by~\cite{Gourieroux/Jasiak:2006}, who constructed a stock price model with smooth transitions of drift and volatility regimes. More recently, polynomial   diffusions have featured in the context of financial applications in several papers; see \cite{Filipovic/Gourier/Mancini:2015,Filipovic/Larsson/Trolle:2014} for models of the term structure of variance swap rates and interest rates, respectively, and \citet{Cuchiero/etal:2012} for variance reduction for option pricing and hedging, among other applications. There are several reasons for moving beyond the affine class. In particular, non-trivial dynamics on compact state spaces becomes a possibility, which together with the polynomial   property fits well with polynomial expansion techniques; see also \cite{Filipovic/Mayerhofer/Schneider:2013}. Also on non-compact state spaces one can achieve richer dynamics than in the affine case. Examples of non-affine polynomial   processes include multidimensional Jacobi or Fisher-Wright processes \citep{Ethier:1976,Gourieroux/Jasiak:2006}, Pearson diffusions \citep{Forman/Sorensen:2008}, and Dunkl processes \citep{Dunkl:1992,Gallardo/Yor:2006}.

The rest of the paper is structured as follows. In Section~\ref{S:defPP} we define polynomial   diffusions. Section~\ref{secPEM} is concerned with power and exponential moments. In Section~\ref{secUniqueness} we discuss uniqueness. In Section~\ref{secExBa} we treat existence and boundary attainment.  Section~\ref{S:examples} contains examples of semialgebraic state spaces. Section~\ref{S:applications} outlines various polynomial diffusion models in finance. For the sake of readability most proofs are given in the appendix. Some basic notions from algebraic geometry are reviewed in Section~\ref{A:alg}.

We end this introduction with some notational conventions that will be used throughout this paper. For a function $f:\R^d\to\R$ we write $\{f=0\}$ for the set $\{x\in\R^d:f(x)=0\}$. A {\em polynomial}~$p$ on $\R^d$ is a map $\R^d\to\R$ of the form $\sum_\alpha c_\alpha x_1^{\alpha_1}\cdots x_d^{\alpha_d}$, where the sum runs over all multi-indices $\alpha=(\alpha_1,\ldots,\alpha_d)\in\N^d_0$ and only finitely many of the coefficients $c_\alpha$ are nonzero. Such a representation is unique. The {\em degree} of $p$ is the number $\deg p=\max\{\alpha_1+\cdots+\alpha_d : c_\alpha \ne 0\}$. We let $\Pol(\R^d)$ denote the ring of all polynomials on $\R^d$, and $\Pol_n(\R^d)$ the subspace consisting of polynomials of degree at most~$n$. Let $E$ be a subset of~$\R^d$. A {\em polynomial on~$E$} is the restriction $p=q|_E$ to~$E$ of a polynomial $q\in\Pol(\R^d)$. Its degree is $\deg p=\min\{\deg q : p=q|_E,\, q\in\Pol(\R^d)\}$. We let $\Pol(E)$ denote the ring of polynomials on~$E$,  and $\Pol_n(E)$ the subspace of polynomials on $E$ of degree at most~$n$. Both $\Pol_n(\R^d)$ and $\Pol_n(E)$ are finite-dimensional real vector spaces, but if there are nontrivial polynomials that vanish on~$E$ their dimensions will be different. If $E$ has a nonempty interior then $\Pol_n(\R^d)$ and $\Pol_n(E)$ can be identified. The set of real symmetric $d\times d$ matrices is denoted $\S^d$, and the subset of positive semidefinite matrices is denoted~$\S^d_+$.

\section{Definition of polynomial   diffusions} \label{S:defPP}

Throughout this paper we fix maps $a: \R^d\to\S^d$ and $b:\R^d\to\R^d$ with
\begin{equation} \label{eq:abX}
\text{$a_{ij}\in\Pol_2(\R^d)$ and $b_i\in\Pol_1(\R^d)$ for all $i,j$}
\end{equation}
and a state space $E\subseteq\R^d$. Our goal is to investigate the following issues:
\begin{enumerate}
\item[(a)] For a suitable class of state spaces $E$, find conditions on $a$, $b$, $E$ that guarantee the existence of an $E$-valued solution to the stochastic differential equation
\begin{equation} \label{eq:SDE}
\d X_t = b(X_t)\,\d t + \sigma(X_t)\,\d W_t
\end{equation}
for some $d$-dimensional Brownian motion $W$ and some continuous $\sigma:\R^d\to\R^{d\times d}$ with $\sigma\sigma^\top=a$ on $E$. We will consider the class of basic closed semialgebraic sets $E$, defined using polynomial equalities and inequalities.
\item[(b)] Find conditions for {\em uniqueness in law for $E$-valued solutions to~\eqref{eq:SDE}}. By this we mean that for any $x\in E$ and any $E$-valued solutions $X$ and $X'$ to~\eqref{eq:SDE} with $X_0=X_0'=x$, possibly with different driving Brownian motions, $X$ and $X'$ have the same law.
\item[(c)] Find conditions for a solution to~\eqref{eq:SDE} to attain the boundary of~$E$.
\item[(d)] Find large parametric classes of $a$, $b$, $E$ for which \eqref{eq:SDE} admits a solution.
\end{enumerate}

Investigating these issues is motivated by the fact that diffusions~\eqref{eq:SDE} admit closed form conditional moments and have broad applications in finance, as we shall see below.

We consider the partial differential operator $\Gcal$ given by
\begin{equation} \label{eq:G}
\Gcal f = \frac{1}{2}\tr( a\, \nabla^2 f) + b^\top \nabla f.
\end{equation}
In view of \eqref{eq:abX}, $\Gcal$ maps $\Pol_n(\R^d)$ to itself for each $n\in\N$. As we work on a state space $E\subseteq\R^d$ we now refine this property. We say that $\Gcal$ is {\em well-defined on $\Pol(E)$} if $\Gcal f=0$ on $E$ for any $f\in \Pol(\R^d)$ with $f=0$ on $E$. In this case, $\Gcal$ is well-defined as an operator on~$\Pol(E)$. This always holds if $E$ has a nonempty interior.

\begin{definition} \label{D:PPg}
The operator $\Gcal$ is called {\em polynomial   on $E$} if it is well-defined on $\Pol(E)$, and thus maps $\Pol_n(E)$ to itself for each $n\in\N$. In this case, we call any $E$-valued solution to~\eqref{eq:SDE} a {\em polynomial   diffusion on $E$}.
\end{definition}

It is a simple matter to verify that any second order partial differential operator that maps $\Pol_n(E)$ to itself for each $n\in\N$ is necessarily of the form \eqref{eq:abX} and \eqref{eq:G} on $E$.

\begin{lemma} \label{L:PPchar}
Let $\widetilde\Gcal f = \frac{1}{2}\tr( \widetilde a\, \nabla^2 f) + \widetilde b^\top \nabla f$ be a partial differential operator for some maps $\widetilde a: \R^d\to\S^d$ and $\widetilde b:\R^d\to\R^d$. Assume $\widetilde\Gcal$ is well-defined on $\Pol(E)$. Then the following are equivalent.
\begin{enumerate}
\item\label{L:PPchar1} $\widetilde\Gcal$ maps $\Pol_n(E)$ to itself for each $n\in\N$.
\item\label{L:PPchar2} $\widetilde\Gcal$ maps $\Pol_n(E)$ to itself for $n\in\{1,2\}$.
\item\label{L:PPchar3} The components of $\widetilde a$ and $\widetilde b$ restricted to $E$ lie in $\Pol_2(E)$ and $\Pol_1(E)$, respectively.
\end{enumerate}
In this case, $\widetilde a$ and $\widetilde b$ restricted to $E$ are uniquely determined by the action of $\widetilde\Gcal$ on $\Pol_2(E)$.
\end{lemma}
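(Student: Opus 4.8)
The plan is to establish the cycle \ref{L:PPchar3}$\Rightarrow$\ref{L:PPchar1}$\Rightarrow$\ref{L:PPchar2}$\Rightarrow$\ref{L:PPchar3} and then read the final uniqueness assertion off the computation used in the last step. The implication \ref{L:PPchar1}$\Rightarrow$\ref{L:PPchar2} is immediate. Throughout I would use two elementary facts: first, $\partial_i$ maps $\Pol_n(\R^d)$ into $\Pol_{n-1}(\R^d)$ and $\partial_i\partial_j$ maps it into $\Pol_{n-2}(\R^d)$; second, $\Pol_m(E)\cdot\Pol_n(E)\subseteq\Pol_{m+n}(E)$, since if $p=q|_E$ and $p'=q'|_E$ with $\deg q\le m$, $\deg q'\le n$, then $pp'=(qq')|_E$ with $\deg(qq')\le m+n$. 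The hypothesis that $\widetilde\Gcal$ is well-defined on $\Pol(E)$ is what makes ``$\widetilde\Gcal$ maps $\Pol_n(E)$ to itself'' meaningful, i.e. lets $\widetilde\Gcal$ descend to a genuine operator on each $\Pol_n(E)$.

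For \ref{L:PPchar3}$\Rightarrow$\ref{L:PPchar1}: take $q\in\Pol_n(\R^d)$ and write $\widetilde\Gcal q=\frac{1}{2}\sum_{i,j}\widetilde a_{ij}\,\partial_i\partial_j q+\sum_i\widetilde b_i\,\partial_i q$. Restricting to $E$, each term of the first sum is the product of $\widetilde a_{ij}|_E\in\Pol_2(E)$ with $(\partial_i\partial_j q)|_E\in\Pol_{n-2}(E)$, and each term of the second sum is the product of $\widetilde b_i|_E\in\Pol_1(E)$ with $(\partial_i q)|_E\in\Pol_{n-1}(E)$; by submultiplicativity of degrees both types of term lie in $\Pol_n(E)$, so $(\widetilde\Gcal q)|_E\in\Pol_n(E)$.

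The core is \ref{L:PPchar2}$\Rightarrow$\ref{L:PPchar3}, obtained by feeding the coordinate monomials into $\widetilde\Gcal$. Applying the formula to $f=x_k$ kills the second-order term and gives $\widetilde\Gcal x_k=\widetilde b_k$; since $x_k|_E\in\Pol_1(E)$, hypothesis \ref{L:PPchar2} with $n=1$ forces $\widetilde b_k|_E\in\Pol_1(E)$ for every $k$. Applying the formula to $f=x_kx_\ell$ and using symmetry of $\widetilde a$ gives, on all of $\R^d$,
\[
\widetilde\Gcal(x_kx_\ell)=\widetilde a_{k\ell}+\widetilde b_k\,x_\ell+\widetilde b_\ell\,x_k .
\]
Restricting to $E$, the left side lies in $\Pol_2(E)$ by \ref{L:PPchar2} with $n=2$, while $\widetilde b_k|_E\,x_\ell|_E$ and $\widetilde b_\ell|_E\,x_k|_E$ lie in $\Pol_1(E)\cdot\Pol_1(E)\subseteq\Pol_2(E)$ by the previous step; hence $\widetilde a_{k\ell}|_E\in\Pol_2(E)$ for all $k,\ell$, which is \ref{L:PPchar3}.

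Finally, the displayed identities rearrange to $\widetilde b_k|_E=(\widetilde\Gcal x_k)|_E$ and $\widetilde a_{k\ell}|_E=(\widetilde\Gcal(x_kx_\ell))|_E-x_\ell|_E\,(\widetilde\Gcal x_k)|_E-x_k|_E\,(\widetilde\Gcal x_\ell)|_E$, which express $\widetilde a|_E$ and $\widetilde b|_E$ entirely through the values of $\widetilde\Gcal$ on the monomials $x_k$ and $x_kx_\ell$, i.e. through its action on $\Pol_2(E)$; this yields the claimed uniqueness. I do not expect a genuine obstacle: everything reduces to direct computation. The only point needing care is the degree bookkeeping in $\Pol(E)$ rather than $\Pol(\R^d)$ — restricted polynomials can drop in degree, so all statements must be phrased with ``$\le$'' and rely on $\Pol_m(E)\cdot\Pol_n(E)\subseteq\Pol_{m+n}(E)$ — together with keeping well-definedness in force so that $\widetilde\Gcal$ really acts on the quotient spaces $\Pol_n(E)$.
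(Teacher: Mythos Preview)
Your proposal is correct and follows essentially the same approach as the paper's own proof: the implications \ref{L:PPchar1}$\Rightarrow$\ref{L:PPchar2} and \ref{L:PPchar3}$\Rightarrow$\ref{L:PPchar1} are treated as immediate, and \ref{L:PPchar2}$\Rightarrow$\ref{L:PPchar3} together with the uniqueness assertion is obtained by applying $\widetilde\Gcal$ to the monomials of degree one and two. You have simply spelled out in detail what the paper compresses into a single sentence.
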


\begin{proof}
The implications ${\ref{L:PPchar1}}\Rightarrow{\ref{L:PPchar2}}$ and ${\ref{L:PPchar3}}\Rightarrow{\ref{L:PPchar1}}$ are immediate, and the implication ${\ref{L:PPchar2}}\Rightarrow{\ref{L:PPchar3}}$ follows upon applying $\widetilde\Gcal$ to the monomials of degree one and two. In particular, this pins down $\widetilde a$ and $\widetilde b$ on $E$, and thus also establishes the last part of the lemma.  
\end{proof}

In the one-dimensional case, $d=1$, one can classify all polynomial   diffusions on intervals $E$. Indeed, one has $a(x)=a+\alpha x+Ax^2$ and $b(x)=b+\beta x$ for some scalars $a,\alpha,A,b,\beta$, and $E=\{x\in\R:a(x)\ge 0\}$. See \citet{Forman/Sorensen:2008} and \citet{Filipovic/Gourier/Mancini:2015} for details.

The multidimensional case is less trivial. For example, let $d=2$, $E=\R\times\{0\}$, and consider the operator $\Gcal f(x,y)=\frac{1}{2}\partial_{xx}f(x,y)+\partial_y f(x,y)$. This operator is not well-defined on $\Pol(E)$, since the polynomial $f(x,y)=y$ vanishes on $E$, but $\Gcal f(x,y)=1$. On the other hand, $\Gcal$ is the generator of the diffusion $\d X_t=(\d B_t,\d t)$, where $B$ is a one-dimensional Brownian motion. This process immediately leaves~$E$ for any starting point $x\in E$. If, however, an $E$-valued solution to \eqref{eq:SDE} exists for any starting point $x\in E$, then $\Gcal$ is well-defined on $\Pol(E)$. This follows from the following basic positive maximum principle.

\begin{lemma} \label{L:maxprinciple}
Consider $f\in C^2(\R^d)$ and suppose ${\overline x}\in E$ is a maximizer of $f$ over~$E$. If~\eqref{eq:SDE} admits an $E$-valued solution with $X_0=\overline x$, then $\Gcal f({\overline x})\le 0$.
\end{lemma}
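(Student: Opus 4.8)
The plan is to run a standard positive-maximum-principle argument: apply It\^o's formula to $f(X_t)$ along an $E$-valued solution started at $\overline x$, use the martingale property to eliminate the stochastic integral, exploit that $\overline x$ maximizes $f$ over $E$ to obtain an inequality, and then divide by time and let time tend to zero.

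Concretely, first I would fix $n$ with $|\overline x|<n$ and set $\tau_n=\inf\{t\ge 0:|X_t|\ge n\}$, so that $\tau_n>0$ almost surely by path-continuity of $X$. Applying It\^o's formula to $f(X_{t\wedge\tau_n})$ and using that $X_s\in E$ together with $\sigma\sigma^\top=a$ on $E$, the quadratic-variation term contributes $\tfrac12\tr(a(X_s)\nabla^2 f(X_s))$, so the finite-variation part is exactly $\int_0^{t\wedge\tau_n}\Gcal f(X_s)\,\d s$, and we obtain
\[
f(X_{t\wedge\tau_n}) = f(\overline x) + \int_0^{t\wedge\tau_n}\Gcal f(X_s)\,\d s + \int_0^{t\wedge\tau_n}\nabla f(X_s)^\top\sigma(X_s)\,\d W_s.
\]
On the stochastic interval up to $\tau_n$ the process stays in the closed ball of radius $n$, on which $\nabla f$ and $\sigma$ are bounded (being continuous), so the stopped stochastic integral is a true martingale with zero expectation. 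Since $X$ is $E$-valued we have $X_{t\wedge\tau_n}\in E$, and since $\overline x$ maximizes $f$ over $E$ it follows that $f(X_{t\wedge\tau_n})\le f(\overline x)$. Taking expectations therefore yields
\[
\E\!\left[\int_0^{t\wedge\tau_n}\Gcal f(X_s)\,\d s\right]\le 0 .
\]

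Finally I would divide by $t$ and let $t\downarrow 0$. Because $\tau_n>0$ almost surely and $s\mapsto\Gcal f(X_s)$ is continuous along the paths, $\tfrac1t\int_0^{t\wedge\tau_n}\Gcal f(X_s)\,\d s\to\Gcal f(X_0)=\Gcal f(\overline x)$ almost surely; moreover this quantity is bounded in absolute value by $\sup_{|y|\le n}|\Gcal f(y)|<\infty$, so bounded convergence gives $\Gcal f(\overline x)\le 0$, as claimed.

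The argument is essentially routine; the only points requiring a little care are the localization at $\tau_n$ (the coefficients and $\Gcal f$ need not be globally bounded, which is why one stops and invokes bounded convergence rather than arguing globally) and the observation that $\tau_n>0$ almost surely, which is what makes the $t\downarrow 0$ limit actually single out $\Gcal f(\overline x)$ instead of some average.
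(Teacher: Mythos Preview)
Your argument is correct. It differs from the paper's proof in its overall strategy: you run the classical ``average and let $t\downarrow 0$'' argument, localizing spatially at $\tau_n$ to turn the stochastic integral into a true martingale and then invoking bounded convergence to extract $\Gcal f(\overline x)$ in the limit. The paper instead argues by contradiction: assuming $\Gcal f(\overline x)>0$, it introduces the stopping time $\tau=\inf\{t:\Gcal f(X_t)\le 0\}>0$ and observes that $f(X_{t\wedge\tau})-f(\overline x)-\int_0^{t\wedge\tau}\Gcal f(X_s)\,\d s$ is a local martingale starting at zero yet strictly negative for all $t>0$, which is impossible. Your route is more explicit about integrability and the martingale property, at the cost of an extra limiting step; the paper's route is shorter and avoids any $t\downarrow 0$ limit, but relies on the (easy) fact that a continuous local martingale issued from zero cannot be strictly negative at all positive times. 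Both are entirely standard variants of the positive maximum principle.
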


\begin{proof}
Let $X$ be an $E$-valued solution to~\eqref{eq:SDE} with~$X_0=\overline x$, and assume for contradiction that $\Gcal f({\overline x})>0$. By definition of global maximizer, $f(x)\le f({\overline x})$ for all $x\in E$. Let $\tau=\inf\{t\ge 0: \Gcal f(X_t)\le 0\}$, and note that $\tau>0$. Then for $t\in(0,\tau)$ we have $f(X_t)\le f({\overline x})$ and $\Gcal f(X_t)>0$, which implies
\[
f(X_{t\wedge\tau}) - f({\overline x}) - \int_0^{t\wedge\tau} \Gcal f(X_s)\d s < 0
\]
for all $t>0$. Thus the left-hand side is a local martingale starting from zero, strictly negative for all $t>0$. This contradiction proves that $\Gcal f({\overline x})\le 0$.  
\end{proof}

Regarding uniqueness, it is crucial to restrict attention to $E$-valued solutions. To illustrate what can otherwise go wrong, consider the stochastic differential equation $\d X_t = -2\sqrt{X_t^-}\d t+ 2\sqrt{X_t^+}\d W_t$, which is well-known to have a unique $\R_+$-valued solution: the zero-dimensional squared Bessel process. However, this stochastic differential equation admits other solutions that do not remain in $\R_+$, for example $X_t = Y_t\1{t\le\tau} - (t-\tau)^2\1{t>\tau}$, where $Y$ is a zero-dimensional squared Bessel process with $Y_0\ge0$ and $\tau=\inf\{t:Y_t=0\}$. Here $\tau$ is finite almost surely.

Note that in Definition~\ref{D:PPg} we require neither uniqueness of solutions to~\eqref{eq:SDE}, nor that~$\Gcal$ be the generator of a Markov process on $E$. There are two reasons for this. First, existence of $E$-valued solutions to~\eqref{eq:SDE} does not in itself imply that those solutions are Markovian. Second, in the context of Markov processes, the polynomial   property holds if and only if the corresponding semigroup leaves $\Pol_n(E)$ invariant for each $n\in\N$. However, this fact, properly phrased, does not require the Markov property. Only It\^o calculus is needed. This observation is crucial for our approach to proving uniqueness. Finally, we remark that a polynomial   diffusion that is also a Markov process is a ``polynomial process'' in the terminology of \cite{Cuchiero/etal:2012}, with vanishing killing rate and no jumps.

\section{Power and exponential moments} \label{secPEM}

Throughout this section we assume that $\Gcal$ is polynomial   on $E$ and let $X$ be an $E$-valued solution to~\eqref{eq:SDE} realized on a filtered probability space $(\Omega,\Fcal,\Fcal_t,\P)$.

For any $n\in\N$, we let $N=N(n,E)$ denote the dimension of ${\rm Pol}_n(E)$. We fix a basis of polynomials $h_1,\dots,h_N$ for ${\rm Pol}_n(E)$ and write
\[
H(x) = (h_1(x),\dots,h_N(x))^\top.
\]
Then for each $p\in\Pol_n(E)$ there exists a unique vector $\vvec p\in\R^{{N}}$ such that
\begin{equation} \label{eq:PT}
p(x)=H(x)^\top \vvec p.
\end{equation}
The restriction of $\Gcal$ to $\Pol_n(E)$ has a unique matrix representation $G\in\R^{{N}\times {N}}$, characterized by the property that $G\,\vvec p$ is the coordinate vector of~$\Gcal p$ whenever $\vvec p$ is the coordinate vector of $p$. That is, we have
\begin{equation} \label{eq:GnPT}
\Gcal p(x) = H(x)^\top G\,\vvec p.
\end{equation}

We now show that $\E[p(X_T) \mid \Fcal_t]$ is indeed well-defined as a polynomial function of~$X_t$. Recall that we do not assume uniqueness of solutions to~\eqref{eq:SDE}, and we do not require~$X$ to be Markov. The proof is given in Section~\ref{secTmoments}.

\begin{theorem} \label{T:moments}
If $\E[\|X_0\|^{2n}]<\infty$, then for any $p\in\Pol_n(E)$ with coordinate representation~$\vvec p\in\R^{{N}}$, we have
\[
\E[p(X_T) \mid \Fcal_t] = H(X_t)^\top \e^{(T-t)G}\,\vvec p, \qquad t\le T.
\]
\end{theorem}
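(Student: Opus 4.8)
The strategy is to apply Itô's formula to $H(X_t)$ — which turns the matrix $G$ into the drift — and then to show that the resulting local martingale is a true martingale by means of a moment bound coming from a Gronwall-type estimate. Concretely, first I would apply Itô's formula to each coordinate $h_i\in\Pol_n(\R^d)$ (choosing $C^2$ polynomial representatives on $\R^d$). Since $\Gcal$ is polynomial on $E$ and $X$ is $E$-valued, $\Gcal h_i$ agrees on $E$ with the element of $\Pol_n(E)$ whose coordinate vector is the $i$th row of $G$; hence in vector form
\[
\d H(X_t) = G^\top H(X_t)\,\d t + \d M_t,
\]
where $M$ is a local martingale (a stochastic integral against $W$). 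Because the stochastic integrand is continuous, $M$ is a continuous local martingale starting at $M_0=0$.

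Next I would establish the moment bound $\E[\sup_{s\le T}\|H(X_s)\|^2]<\infty$ for every fixed $T$, under the hypothesis $\E[\|X_0\|^{2n}]<\infty$. The point is that the components of $H$ have degree at most $n$, so $\|H(x)\|^2$ is dominated by $C(1+\|x\|^{2n})$; thus it suffices to bound $\E[\sup_{s\le t}\|X_s\|^{2n}]$. For this I would apply Itô to $f(x)=(1+\|x\|^2)^n$, which again lies in $\Pol_{2n}(\R^d)$; the drift term $\Gcal f$ is a polynomial of degree at most $2n$ (using $a_{ij}\in\Pol_2$, $b_i\in\Pol_1$), hence bounded by $C(1+\|x\|^2)^n = Cf(x)$, while the quadratic variation of the martingale part is controlled similarly. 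A localization argument (stopping at $\tau_k=\inf\{t:\|X_t\|\ge k\}$), combined with the Burkholder–Davis–Gundy inequality and Gronwall's lemma, then yields $\E[\sup_{s\le t\wedge\tau_k}f(X_s)]\le f(X_0)\,\e^{Ct}$ uniformly in $k$; Fatou gives the bound for $\E[\sup_{s\le t}\|X_s\|^{2n}]$, and hence for $\E[\sup_{s\le t}\|H(X_s)\|^2]$. This is the step I expect to be the main obstacle: one must handle the interplay of the linear-growth-in-$f$ bound on the drift with the localization and BDG estimate carefully enough to close the Gronwall loop, and one must be slightly careful that $\Gcal$ acting on a polynomial representative on $\R^d$ of a $\Pol(E)$-element need not itself be small off $E$ — but since $X$ stays in $E$ this causes no trouble.

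With the $L^2$ bound in hand, $M$ is a true (square-integrable) martingale, and more generally so is $e^{-sG^\top}H(X_s) - \int_0^s$ (martingale increments), obtained by applying Itô to $s\mapsto e^{-sG^\top}H(X_s)$: indeed
\[
\d\!\left(\e^{-sG^\top}H(X_s)\right) = \e^{-sG^\top}\bigl(-G^\top H(X_s)\,\d s + \d H(X_s)\bigr) = \e^{-sG^\top}\,\d M_s,
\]
which is a martingale because its quadratic variation is bounded by a constant times $\int_0^T\|H(X_s)\|^2\,\d t$ times the (bounded on $[0,T]$) matrix norm of $e^{-sG^\top}$. Taking conditional expectations between $t$ and $T$ gives $\E[\e^{-TG^\top}H(X_T)\mid\Fcal_t] = \e^{-tG^\top}H(X_t)$, i.e. $\E[H(X_T)\mid\Fcal_t] = \e^{(T-t)G^\top}H(X_t)$. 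Finally, for $p\in\Pol_n(E)$ with coordinate vector $\vvec p$ we have $p(x)=H(x)^\top\vvec p$, so
\[
\E[p(X_T)\mid\Fcal_t] = \E[H(X_T)\mid\Fcal_t]^\top\vvec p = \bigl(\e^{(T-t)G^\top}H(X_t)\bigr)^\top\vvec p = H(X_t)^\top\e^{(T-t)G}\,\vvec p,
\]
which is the claimed identity.
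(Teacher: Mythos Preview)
Your proposal is correct and follows essentially the same route as the paper: establish a Gronwall-type moment bound on $\E[(1+\|X_t\|^{2n})]$ via localization (the paper's Lemma~\ref{L:exmom}), use it to upgrade the local martingale from It\^o's formula to a true martingale, and then identify the conditional expectation with the matrix exponential. The only cosmetic differences are that the paper proves the pointwise bound $\E[1+\|X_t\|^{2k}\mid\Fcal_0]\le(1+\|X_0\|^{2k})\e^{Ct}$ without BDG (which already suffices to control the quadratic variation, so your sup-bound is stronger than needed), and that the paper takes conditional expectations first and solves the resulting linear integral equation $F(u)=H(X_t)+G^\top\!\int_t^u F(s)\,\d s$ for $F(u)=\E[H(X_u)\mid\Fcal_t]$, whereas you equivalently recognize $\e^{-sG^\top}H(X_s)$ as a martingale directly.
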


The following result is a direct consequence of Theorem~\ref{T:moments}. Its statement and proof use standard multi-index notation: For a multi-index $\k=(k_1,\ldots,k_d)\in\N^d_0$ we write $|\k |=k_1+\cdots+k_d$ and $x^\k=x_1^{k_1}\cdots x_d^{k_d}$.

\begin{corollary} \label{C:moments}
For any time points $0\le t_1<\cdots<t_m$ and multi-indices $\k(1), \ldots, \k(m)$ such that
\[
\E\left[ \|X_0\|^{2|\k(1)|+\cdots+2|\k(m)|}\right] < \infty,
\]
the expectation $\E[ X_{t_1}^{\k(1)} \cdots X_{t_m}^{\k(m)} ]$ is uniquely determined by $\Gcal$ and the law of $X_0$.
\end{corollary}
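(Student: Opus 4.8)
The plan is to argue by induction on the number $m$ of time points, the inductive step being a single conditioning on $\Fcal_{t_{m-1}}$ combined with Theorem~\ref{T:moments}. Throughout, write $n=|\k(1)|+\cdots+|\k(m)|$ for the total order. Since $\E[\|X_0\|^{2n}]<\infty$ implies $\E[\|X_0\|^{2\ell}]<\infty$ for every $\ell\le n$, the single hypothesis of the corollary automatically covers all the lower-order sub-problems that will arise; and since the value of an expectation of the given form is independent of the polynomial basis used in Theorem~\ref{T:moments}, one is free to fix such a basis for each degree.

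For the base case $m=1$, apply Theorem~\ref{T:moments} with $t=0$, $T=t_1$ and $p(x)=x^{\k(1)}\in\Pol_{|\k(1)|}(E)$, and take expectations to obtain
\[
\E\big[X_{t_1}^{\k(1)}\big]=\E\big[H(X_0)\big]^\top \e^{t_1 G}\,\vvec p ,
\]
where $H$ is the chosen basis of $\Pol_{|\k(1)|}(E)$ and $G$ the corresponding matrix representation of $\Gcal$. Here $G$ and $\vvec p$ depend only on $\Gcal$, while $\E[H(X_0)]$ is finite (because $\E[\|X_0\|^{|\k(1)|}]<\infty$) and depends only on the law of $X_0$, which settles $m=1$.

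For the inductive step I would assume the claim for $m-1$ time points (and any multi-indices with the analogous finite moment), and then for $m$ time points condition on $\Fcal_{t_{m-1}}$, pulling out the $\Fcal_{t_{m-1}}$-measurable factor:
\[
\E\Big[\prod_{j=1}^{m}X_{t_j}^{\k(j)}\Big]=\E\Big[\prod_{j=1}^{m-1}X_{t_j}^{\k(j)}\ \E\big[X_{t_m}^{\k(m)}\mid\Fcal_{t_{m-1}}\big]\Big].
\]
Theorem~\ref{T:moments} (with $p(x)=x^{\k(m)}\in\Pol_{|\k(m)|}(E)$, $t=t_{m-1}$, $T=t_m$) identifies the inner conditional expectation with $q(X_{t_{m-1}})$ for a polynomial $q\in\Pol_{|\k(m)|}(E)$ whose coefficients are determined by $\Gcal$, $t_m-t_{m-1}$ and $\k(m)$. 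Hence $X_{t_{m-1}}^{\k(m-1)}q(X_{t_{m-1}})=\sum_{\mathbf j}c_{\mathbf j}X_{t_{m-1}}^{\mathbf j}$, a polynomial in $X_{t_{m-1}}$ with $\Gcal$-determined coefficients and $|\mathbf j|\le|\k(m-1)|+|\k(m)|$, so that
\[
\E\Big[\prod_{j=1}^{m}X_{t_j}^{\k(j)}\Big]=\sum_{\mathbf j}c_{\mathbf j}\,\E\big[X_{t_1}^{\k(1)}\cdots X_{t_{m-2}}^{\k(m-2)}X_{t_{m-1}}^{\mathbf j}\big].
\]
Every term on the right is an expectation over $m-1$ time points with total order $|\k(1)|+\cdots+|\k(m-2)|+|\mathbf j|\le n$, hence determined by $\Gcal$ and the law of $X_0$ by the induction hypothesis (for $m=2$ the empty product equals $1$ and these reduce to the base case); since the coefficients $c_{\mathbf j}$ depend only on $\Gcal$, the left-hand side is determined as well.

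The step I expect to require the most care is integrability: the tower property and the factorization above are legitimate only if $\prod_{j=1}^{m}X_{t_j}^{\k(j)}\in L^1$, which by a generalized H\"older inequality follows once $\E[\|X_{t_j}\|^{n}]<\infty$ for every $j$. I would therefore first record the relevant moment bound --- namely that $\E[\|X_0\|^{2n}]<\infty$ propagates to $\sup_{t\le T}\E[\|X_t\|^{2n}]<\infty$ for each finite $T$ --- which follows by a standard localization and Gronwall argument using that $\Gcal$ maps $\Pol_{2n}(E)$ into itself, so that $\Gcal\|x\|^{2n}$ is dominated by $C(1+\|x\|^{2n})$; this estimate is in any case implicit in the proof of Theorem~\ref{T:moments}. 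With it in place the induction goes through, and the rest is routine bookkeeping.
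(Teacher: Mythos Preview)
Your argument is correct and follows essentially the same route as the paper: the paper treats the case $m=2$ by conditioning on $\Fcal_{t_1}$, invoking Theorem~\ref{T:moments} to turn $X_{t_1}^{\k(1)}\E[X_{t_2}^{\k(2)}\mid\Fcal_{t_1}]$ into a polynomial in $X_{t_1}$ with $G$-determined coefficients, and then applying Theorem~\ref{T:moments} once more; it then dispatches general $m$ with the phrase ``by iteration''. Your explicit induction is precisely that iteration, and your integrability discussion (via the Gronwall bound that appears as Lemma~\ref{L:exmom} in the paper) makes rigorous a point the paper leaves implicit.
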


\begin{proof}
We prove the result for $m=2$; the general case follows by iteration. Set ${\bf j}=\k(1)$, $\k=\k(2)$, and $n=|{\bf j}|+|\k|$. Since $\E[\|X_0\|^{2|\k|}]<\infty$, Theorem~\ref{T:moments} yields $X_{t_1}^{\bf j}\E[X_{t_2}^{\k}\mid\Fcal_{t_1}]=p(X_{t_1})$ for some polynomial $p\in\Pol_n(E)$ whose coordinate representation $\vvec p$ only depends on $G$. Since $\E[\|X_0\|^{2n}]<\infty$, another application of Theorem~\ref{T:moments} yields
\[
\E[X_{t_1}^{\bf j}X_{t_2}^\k] = \E[\,\E[p(X_{t_1})\mid\Fcal_0]\,] = \E[H(X_0)^\top \e^{t_1G}\,\vvec q \,]
.\]
This proves the corollary. 
\end{proof}

We next provide conditions under which $X_T$ admits finite exponential moments. This result will be used in connection with proving uniqueness in Theorem~\ref{TNEW:unique} below, but is also of interest on its own for applications in finance.\footnote{We thank Mykhaylo Shkolnikov for suggesting a way to improve an earlier version of this result.} Its proof is given in Section~\ref{secTexpmom}.

\begin{theorem} \label{T:expmom}
If
\begin{equation} \label{T:expmom:1}
\E\left[ \e^{\delta \|X_0\|} \right] < \infty \quad\text{for some}\quad \delta>0
\end{equation}
and the diffusion coefficient satisfies the linear growth condition
\begin{equation} \label{eq:alinNEW}
\|a(x)\| \le C(1+\|x\|)  \quad\text{for all $x\in E$}
\end{equation}
for some constant $C$, then for each $t\ge0$ there exists $\varepsilon>0$ with $\E[ \e^{\varepsilon \|X_t\|}] < \infty$.
\end{theorem}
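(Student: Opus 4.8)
The plan is to apply It\^o's formula to the process $u(t,X_t)$, where $u(t,x)=\exp\bigl(c(t)\phi(x)\bigr)$, $\phi(x)=\sqrt{1+\|x\|^2}$ is a smooth surrogate for $\|x\|$, and $c:[0,\infty)\to(0,\infty)$ is a decreasing function to be chosen, and to arrange that $u(s,X_s)$ is a local supermartingale on each finite interval. Since $\|x\|\le\phi(x)\le 1+\|x\|$, this will give $\E[\e^{c(t)\|X_t\|}]\le\E[\e^{c(t)\phi(X_t)}]$, while on the other side $\E[\e^{\delta\phi(X_0)}]\le\e^{\delta}\,\E[\e^{\delta\|X_0\|}]<\infty$ by~\eqref{T:expmom:1}; so everything reduces to bounding $\E[\e^{c(t)\phi(X_t)}]$ by $\E[\e^{\delta\phi(X_0)}]$, after which we take $\varepsilon=c(t)$.

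First I would record the generator estimates. The function $\phi$ is $C^\infty$ with $\nabla\phi=x/\phi$ and $\nabla^2\phi=\phi^{-1}(\Id-\phi^{-2}xx^\top)$. Along the path we may replace $a(X_s)$ by $\sigma(X_s)\sigma(X_s)^\top\in\S^d_+$, so that $x^\top a(x)x\ge0$ for the relevant $x\in E$, which gives
\[
\Gcal\phi(x)=\frac{1}{2\phi(x)}\Bigl(\tr a(x)-\frac{x^\top a(x)x}{\phi(x)^2}\Bigr)+\frac{b(x)^\top x}{\phi(x)}\le\frac{d\,\|a(x)\|}{2\phi(x)}+\|b(x)\|.
\]
Using the linear growth bound~\eqref{eq:alinNEW}, the affineness of $b$, and $\phi\ge1$ (so $1+\|x\|\le2\phi(x)$), there are constants $A,B\ge0$ with $\Gcal\phi\le A+B\phi$ on $E$, and likewise $(\nabla\phi)^\top a\,\nabla\phi=x^\top a x/\phi^2\le\|a(x)\|\le 2C\phi(x)$. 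Hence
\[
\partial_t u+\Gcal u=u\Bigl(c'\phi+c\,\Gcal\phi+\tfrac12 c^2(\nabla\phi)^\top a\,\nabla\phi\Bigr)\le u\,\phi\,\bigl(c'+(A+B)c+Cc^2\bigr),
\]
again absorbing the constant terms using $\phi\ge1$.

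The key step is then to choose $c$ as the solution of the Riccati equation $c'=-(A+B)c-Cc^2$ with $c(0)=\delta$: substituting $v=1/c$ turns this into the linear equation $v'=(A+B)v+C$ with $v(0)=1/\delta>0$, whose solution is strictly positive for all $t\ge0$, so $c$ is well-defined, strictly positive and decreasing on $[0,\infty)$. With this choice the bracket above is nonpositive, hence $\partial_t u+\Gcal u\le0$. This is also where I expect the one genuine obstacle: because the drift is affine, $\Gcal\phi$ really does grow linearly in $\phi$, so a fixed-exponent Gr\"onwall estimate on $\E[\e^{\varepsilon\phi(X_t)}]$ cannot close (the term $\phi\,\e^{\varepsilon\phi}$ inflates the exponent); letting $c(t)$ decay along the Riccati flow is precisely the device that compensates, and one must verify that it does not reach $0$ in finite time.

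Finally I would localize. Fix $t\ge0$ and let $\tau_n=\inf\{s\ge0:\|X_s\|\ge n\}$, so $\tau_n\uparrow\infty$ a.s.\ by path continuity. Since $u\in C^{1,2}$ with derivatives bounded on $[0,t]\times\{\|x\|\le n\}$, It\^o's formula shows that $s\mapsto u(s\wedge\tau_n,X_{s\wedge\tau_n})-\int_0^{s\wedge\tau_n}(\partial_t u+\Gcal u)(r,X_r)\,\d r$ is a true martingale on $[0,t]$, and combining this with $\partial_t u+\Gcal u\le0$ gives $\E[u(t\wedge\tau_n,X_{t\wedge\tau_n})]\le\E[u(0,X_0)]=\E[\e^{\delta\phi(X_0)}]$. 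Letting $n\to\infty$ and invoking Fatou's lemma yields $\E[\e^{c(t)\phi(X_t)}]\le\E[\e^{\delta\phi(X_0)}]\le\e^{\delta}\E[\e^{\delta\|X_0\|}]<\infty$, which proves the theorem with $\varepsilon=c(t)$.
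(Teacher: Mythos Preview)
Your argument is correct. The computations of $\nabla\phi$, $\nabla^2\phi$, the bound $\Gcal\phi\le A+B\phi$, and the quadratic variation bound $(\nabla\phi)^\top a\,\nabla\phi\le 2C\phi$ are all valid on $E$ (using, as you note, that $a(X_s)=\sigma(X_s)\sigma(X_s)^\top\in\S^d_+$ along the path). The Riccati substitution $v=1/c$ indeed linearizes the ODE and shows $c(t)>0$ for all $t\ge0$, and the localization plus Fatou step is standard.

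Your route, however, differs substantially from the paper's. The paper first applies variation of constants to split $X_T=A_T+\e^{-\beta(T-T)}Y_T$, where $A_T$ depends deterministically on $X_0$ and $Y$ is a stochastic integral starting from~$0$. It then applies It\^o's formula to $Z_t=f(Y_t)$ with $f(y)\approx\sqrt{1+\|y\|}$; the square-root growth of $f$, combined with the linear growth of $a$, makes the drift and dispersion of $Z$ \emph{uniformly bounded}. This allows an appeal to Hajek's comparison theorem to dominate $\E[\Phi(Z_T)]$ by $\E[\Phi(V)]$ for a Gaussian $V$, whence $\E[\e^{\varepsilon'\|Y_T\|}]<\infty$; the exponential moment of $X_T$ then follows by H\"older from those of $A_T$ (controlled by~\eqref{T:expmom:1}) and $Y_T$. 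Your approach avoids both the variation-of-constants decomposition and the external comparison result: you work directly with $X$, and compensate for the linear growth of the drift by letting the exponent $c(t)$ decay along a Riccati flow, producing a genuine supermartingale. This is more self-contained and gives an explicit $\varepsilon=c(t)$ via the ODE; the paper's method, on the other hand, isolates the role of the initial condition more cleanly and yields a Gaussian tail comparison that may be of independent interest.
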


\section{Uniqueness} \label{secUniqueness}

Throughout this section we assume that $\Gcal$ is polynomial   on $E$. We present three results regarding uniqueness in law for $E$-valued solutions to~\eqref{eq:SDE}. Recall that this notion of uniqueness pertains to deterministic initial conditions, as defined under (b) in Section~\ref{S:defPP}.

The first result relies on the fact that the joint moments of all finite-dimensional marginal distributions of a polynomial   diffusion are uniquely determined by $\Gcal$; see Corollary~\ref{C:moments}. Thus uniqueness in law follows if the finite-dimensional marginal distributions are the only ones with these moments. This property is known as {\em determinacy} in the literature on the moment problem, a classical topic in mathematics; references include \cite{Stieltjes:1894,Akhiezer:1965,Berg:1979,Schmudgen:1991,Stoyanov:2000,Kleiber/Stoyanov:2013} and many others.

\begin{lemma} \label{L:unique}
Let $X$ be an $E$-valued solution to~\eqref{eq:SDE}. If for each $t\ge 0$ there exists $\varepsilon>0$ with $\E[\exp(\varepsilon\|X_t\|)]<\infty$, then any $E$-valued solution to~\eqref{eq:SDE} with the same initial law as~$X$ has the same law as $X$. In particular, this holds if \eqref{T:expmom:1} and~\eqref{eq:alinNEW} are satisfied.
\end{lemma}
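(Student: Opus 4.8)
The plan is to reduce uniqueness in law to a statement about finite-dimensional distributions, and then invoke a multivariate moment-determinacy criterion. By definition, two $E$-valued processes have the same law on path space if and only if all their finite-dimensional marginal distributions agree; since both processes are continuous and adapted, it suffices to match, for every choice of time points $0\le t_1<\cdots<t_m$, the joint law of $(X_{t_1},\dots,X_{t_m})$ with that of the competing solution $X'$. So fix such time points and let $\mu$, $\mu'$ denote the two resulting probability measures on $(\R^d)^m$, both supported in $E^m$. By Corollary~\ref{C:moments}, all mixed moments $\E[X_{t_1}^{\k(1)}\cdots X_{t_m}^{\k(m)}]$ are uniquely determined by $\Gcal$ and the common initial law; the hypothesis $\E[\|X_0\|^{2|\k(1)|+\cdots+2|\k(m)|}]<\infty$ needed to apply that corollary follows because the exponential moment bound at $t=0$ (which is implied by the finiteness of $\E[\exp(\varepsilon\|X_0\|)]$, itself a consequence of the assumption applied at $t=0$, or of \eqref{T:expmom:1}) forces all polynomial moments of $X_0$ to be finite. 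Hence $\mu$ and $\mu'$ have identical moments of all orders.

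The second step is to show that these common moments determine $\mu$ uniquely, i.e.\ that $\mu$ is moment-determinate on $(\R^d)^m$. The natural sufficient condition is a Carleman- or Cramér-type exponential-moment bound for the joint distribution: it suffices to exhibit $\varepsilon'>0$ with $\E[\exp(\varepsilon'\,\|(X_{t_1},\dots,X_{t_m})\|)]<\infty$, since a random vector whose Laplace transform is finite in a neighborhood of the origin is determined by its moments. This in turn follows from the per-time exponential moment hypothesis: writing $\|(x^{(1)},\dots,x^{(m)})\| \le \|x^{(1)}\|+\cdots+\|x^{(m)}\|$ and applying the elementary bound $\exp(\varepsilon'\sum_i\|X_{t_i}\|)\le \tfrac{1}{m}\sum_i \exp(m\varepsilon'\|X_{t_i}\|)$ (convexity of the exponential), one reduces to finiteness of each $\E[\exp(m\varepsilon'\|X_{t_i}\|)]$, which holds for $m\varepsilon'$ small enough by the assumption applied at each of the finitely many times $t_1,\dots,t_m$ (take $\varepsilon'$ to be $1/m$ times the smallest of the corresponding $\varepsilon$'s). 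Therefore $\mu=\mu'$, and since the time points were arbitrary, $X$ and $X'$ have the same law.

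Finally, the last sentence of the lemma is immediate: Theorem~\ref{T:expmom} asserts precisely that conditions \eqref{T:expmom:1} and \eqref{eq:alinNEW} imply that for each $t\ge0$ there is $\varepsilon>0$ with $\E[\exp(\varepsilon\|X_t\|)]<\infty$, which is the standing hypothesis of the lemma.

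I expect the main (though modest) obstacle to be the second step — citing and correctly invoking a genuinely multivariate moment-determinacy result, since in $\R^{dm}$ determinacy is more delicate than on the line and the marginal exponential moments must be assembled into a joint one. The convexity argument above handles the assembly cleanly, so the remaining point is simply to quote an appropriate multivariate determinacy criterion (e.g.\ from \cite{Berg:1979} or \cite{Kleiber/Stoyanov:2013}); everything else is routine.
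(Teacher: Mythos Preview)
Your proposal is correct and follows the same overall architecture as the paper: reduce to finite-dimensional marginals, use Corollary~\ref{C:moments} to match all mixed moments, and then invoke moment determinacy via the exponential-moment hypothesis. The only genuine difference is in how determinacy of the joint law of $(X_{t_1},\dots,X_{t_m})$ is established. You assemble a \emph{joint} exponential moment on $\R^{dm}$ via the convexity inequality $\exp(\varepsilon'\sum_i\|X_{t_i}\|)\le \tfrac1m\sum_i\exp(m\varepsilon'\|X_{t_i}\|)$ and then invoke a multivariate Cram\'er-type criterion. The paper instead stays one-dimensional: for each coordinate $X_{i,t}$ the exponential moment gives an analytic moment generating function, hence determinacy of that single marginal (via \cite{Curtiss:1942}), and then Petersen's theorem \cite{Petersen:1982} is used to pass from determinacy of all one-dimensional marginals to determinacy of the full measure on $\R^{dm}$. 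Your route avoids quoting Petersen's theorem at the cost of needing an explicit multivariate determinacy statement; the paper's route avoids assembling the joint exponential moment but relies on the (nontrivial) marginal-to-joint determinacy result. Both are clean and yield the same conclusion.
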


\begin{proof}
For any $t\ge 0$ and $i\in\{1,\ldots,d\}$, the hypothesis yields $\E[\exp(\varepsilon |X_{i,t}|)]<\infty$ for some $\varepsilon>0$. As a consequence, the moment generating function of $X_{i,t}$ exists and is analytic in $(-\varepsilon,\varepsilon)$, hence equal to its power series expansion, and thus determined by the moments of $X_{i,t}$. By \citet[Theorem~1]{Curtiss:1942}, the moment generating function determines the law of $X_{i,t}$, which thus satisfies the determinacy property. Now, according to \citet[Theorem~3]{Petersen:1982}, determinacy of the one-dimensional marginals of a measure on~$\R^m$ implies determinacy of the measure itself. It follows that determinacy holds for the law of each collection $(X_{t_1},\ldots,X_{t_m})$, $0\le t_1<\cdots<t_m$. By Corollary~\ref{C:moments} the corresponding moments are the same for any $E$-valued solution to~\eqref{eq:SDE} with the same initial law as~$X$. This proves the theorem. 
\end{proof}

If $X_0=x$ is deterministic, then \eqref{T:expmom:1} holds and Lemma~\ref{L:unique} directly yields our first result.

\begin{theorem} \label{TNEW:unique}
If the linear growth condition~\eqref{eq:alinNEW} is satisfied, then uniqueness in law for $E$-valued solutions to~\eqref{eq:SDE} holds.
\end{theorem}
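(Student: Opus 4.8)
The plan is to reduce the theorem directly to Lemma~\ref{L:unique}, whose hypothesis already does all the heavy lifting. By the definition of uniqueness in law for $E$-valued solutions given under (b) in Section~\ref{S:defPP}, we must show that for every deterministic starting point $x\in E$, any two $E$-valued solutions $X$ and $X'$ to~\eqref{eq:SDE} with $X_0=X_0'=x$ have the same law. Fix such an $x$. The key observation is that a deterministic initial condition trivially has finite exponential moments: $\E[\e^{\delta\|X_0\|}]=\e^{\delta\|x\|}<\infty$ for every $\delta>0$, so condition~\eqref{T:expmom:1} holds. Since we are assuming the linear growth condition~\eqref{eq:alinNEW}, the second hypothesis of Theorem~\ref{T:expmom} is in force as well.

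From here the argument is immediate. First I would invoke Theorem~\ref{T:expmom} to conclude that for each $t\ge 0$ there exists $\varepsilon>0$ with $\E[\e^{\varepsilon\|X_t\|}]<\infty$; this is exactly the hypothesis required by Lemma~\ref{L:unique}. Applying Lemma~\ref{L:unique} to $X$ then shows that any $E$-valued solution to~\eqref{eq:SDE} with the same initial law as $X$ — in particular $X'$, which starts at the same deterministic point $x$ — has the same law as $X$. Since $x\in E$ was arbitrary, uniqueness in law for $E$-valued solutions holds.

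There is essentially no obstacle here: the theorem is a corollary packaging Theorem~\ref{T:expmom} and Lemma~\ref{L:unique} together under the single clean hypothesis that the diffusion coefficient grows at most linearly. The one point worth stating carefully is that the notion of uniqueness in law in this paper is formulated only for deterministic initial conditions, which is precisely what makes~\eqref{T:expmom:1} automatic; no further integrability assumption on $X_0$ is needed. All the genuine content — the moment formula of Theorem~\ref{T:moments}, the exponential moment bound of Theorem~\ref{T:expmom}, and the appeal to determinacy of the moment problem via the results of Curtiss and Petersen inside Lemma~\ref{L:unique} — has already been established, so the proof is a two-line deduction.
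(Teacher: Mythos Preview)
Your proposal is correct and matches the paper's own proof essentially verbatim: the paper simply observes that a deterministic initial condition makes~\eqref{T:expmom:1} automatic, and then invokes Lemma~\ref{L:unique} (whose ``in particular'' clause already packages the appeal to Theorem~\ref{T:expmom}). There is nothing to add.
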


Theorem~\ref{TNEW:unique} assumes the linear growth condition~\eqref{eq:alinNEW} to ensure existence of exponential moments. While valid for all affine diffusions, as well as when~$E$ is compact, this condition excludes some interesting examples, in particular geometric Brownian motion.\footnote{For geometric Brownian motion there is a more fundamental reason to expect that uniqueness cannot be proved via the moment problem: it is well-known that the log-normal distribution is not determined by its moments; see \cite{Heyde:1963}. It thus becomes natural to pose the following question: {\it Can one find a process~$Y$, essentially different from geometric Brownian motion, such that all joint moments of all finite-dimensional marginal distributions,
\[
\E[Y_{t_1}^{\alpha_1} \cdots Y_{t_m}^{\alpha_m}], \qquad m\in\N, \ (\alpha_1,\ldots,\alpha_m)\in\N^m,\ 0\le t_1<\cdots<t_m<\infty,
\]
coincide with those of geometric Brownian motion?} We have not been able to exhibit such a process. Note that any such $Y$ must possess a continuous version. Indeed, the known formulas for the moments of the log-normal distribution imply that for each $T\ge0$, there is a constant $c=c(T)$ such that $\E[(Y_t-Y_s)^4] \le \kappa\,(t-s)^2$ for all $s\le t\le T, \ |t-s|\le 1$, whence Kolmogorov's continuity lemma implies that $Y$ has a continuous version; see \citet[Theorem~I.25.2]{Rogers/Williams:1994}.} Uniqueness for the geometric Brownian motion holds of course, and can be established via the Yamada--Watanabe pathwise uniqueness theorem for one-dimensional diffusions. Our second result records this fact.

\begin{theorem}\label{T:unique11}
If the dimension is $d=1$, then uniqueness in law for $E$-valued solutions to~\eqref{eq:SDE} holds.
\end{theorem}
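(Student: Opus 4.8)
The plan is to prove pathwise uniqueness among $E$-valued solutions and then invoke the one-dimensional Yamada--Watanabe theorem. Recall the scalar classification noted above: since $\Gcal$ is polynomial on $E\subseteq\R$, the restrictions of $a$ and $b$ to $E$ are polynomials of degree at most two and one respectively, and $a=\sigma^2\ge0$ on $E$. Fix $x\in E$; we may assume an $E$-valued solution of~\eqref{eq:SDE} with $X_0=x$ exists, for otherwise the statement is vacuous. We make two reductions. First, by L\'evy's characterization the process $\int_0^{\,\cdot}\big(\1{\sigma(X_s)\ge0}-\1{\sigma(X_s)<0}\big)\,\d W_s$ is again a Brownian motion, and with respect to it $X$ solves~\eqref{eq:SDE} with diffusion coefficient equal to $\sqrt a$ on $E$; since only the values of the coefficient on $E$ matter for $E$-valued solutions, we may and do fix it to be (a fixed continuous extension of) $\sqrt a$. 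Second, by the ``pathwise uniqueness plus weak existence implies uniqueness in law'' part of the Yamada--Watanabe theorem, it suffices to show: if $X$ and $X'$ are $E$-valued solutions of~\eqref{eq:SDE} on a common filtered probability space, driven by the same Brownian motion $W$, with $X_0=X_0'=x$, then $X=X'$ up to indistinguishability. The restriction to $E$-valued solutions causes no problem here, since being $E$-valued is a law property that is preserved by the gluing construction underlying the Yamada--Watanabe argument.

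To prove this pathwise uniqueness statement I would localize. Put $\tau_n=\inf\{t\ge0:|X_t|\vee|X_t'|\ge n\}$, so that $\tau_n\uparrow\infty$ a.s.\ by path continuity, and note that on $[0,\tau_n]$ both processes lie in the compact set $K_n:=E\cap[-n,n]$. On $K_n$ the affine drift $b$ is Lipschitz, and the diffusion coefficient $\sigma=\sqrt a$ satisfies a Yamada--Watanabe modulus condition, namely $|\sigma(y)-\sigma(z)|\le C_n|y-z|^{1/2}$ for all $y,z\in K_n$. This is immediate from $\sigma^2=a$ with $a$ a nonnegative polynomial of degree at most two: wherever $a>0$ the function $\sqrt a$ is smooth, so the only points of $K_n$ where smoothness can fail are the (at most two) roots of $a$, and near such a root $r$ lying in $E$ the polynomial $a$ is comparable either to $(x-r)^2$ (forcing $\sqrt a$ to be locally Lipschitz) or to $|x-r|$ (so $\sqrt a$ is locally of order $|x-r|^{1/2}$), in either case $\tfrac12$-H\"older. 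With the Lipschitz drift and this modulus, the classical Yamada--Watanabe estimate -- comparing $|X_{t\wedge\tau_n}-X'_{t\wedge\tau_n}|$ with the usual sequence of smooth approximations to the absolute value, using $\int_{0+}u^{-1}\,\d u=\infty$ -- gives $\E\big[|X_{t\wedge\tau_n}-X'_{t\wedge\tau_n}|\big]=0$ for all $t\ge0$ and all $n$; letting $n\to\infty$ yields $X=X'$.

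The only step that requires any thought is the $\tfrac12$-H\"older estimate for $\sigma$ on $K_n$, and it is elementary once the scalar classification is invoked. The sole subtlety lies at the zeros of $a$ in $E$: at an interior zero a nonnegative quadratic must have a double root, so $\sigma$ is locally Lipschitz there; a zero on the boundary $\partial E$ may be simple, in which case $\sigma$ behaves like $|x-r|^{1/2}$ -- this is precisely the squared Bessel / CIR situation $a(x)=x$ on $[0,\infty)$, or the Jacobi situation $a(x)=x(1-x)$ on $[0,1]$, and the non-Lipschitz, $\tfrac12$-H\"older regime is exactly what the Yamada--Watanabe condition is designed to handle. In particular, if $a\ge0$ on all of $\R$ then the same factorization shows $\sigma$ is locally Lipschitz, so pathwise uniqueness already follows from the standard Lipschitz theory; this covers geometric Brownian motion, where $a(x)=\sigma_0^2x^2$. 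The remaining ingredients -- the localization, the Yamada--Watanabe estimate, and the implication from pathwise uniqueness to uniqueness in law -- are entirely routine; see, e.g., \citet{Rogers/Williams:1994}.
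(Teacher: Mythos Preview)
Your approach is essentially the same as the paper's: reduce to $\sigma=\sqrt{a}$, observe that the affine drift is Lipschitz and the square-root diffusion is locally $\tfrac12$-H\"older on $E$, apply a localized Yamada--Watanabe argument for pathwise uniqueness, and deduce uniqueness in law. The paper obtains the H\"older estimate in one line via $(\sigma(x)-\sigma(y))^2\le|\sigma(x)^2-\sigma(y)^2|=|a(x)-a(y)|\le|a+2nA|\,|x-y|$ for $|x|,|y|\le n$, which replaces your case analysis near the roots, but otherwise the structure is identical.
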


\begin{proof}
Since $\Gcal$ is polynomial, the drift $b(x)$ in \eqref{eq:SDE} is an affine function on $E$, and the dispersion restricted to $E$ is of the form $\sigma(x) = \sqrt{ \alpha + ax + Ax^2}$ for some real parameters $\alpha, a, A$. Hence $b(x)$ is Lipschitz continuous, and $\sigma(x)$ satisfies
\[ \left(\sigma(x)-\sigma(y)\right)^2 \le \rho_n\left( |x-y|\right),\quad \text{for all $x,y\in E$ with $|x|, |y|\le n$,} \]
where $\rho_n(z)= |a+2nA| z$, for any $n\ge 1$. A localization argument in conjunction with \citet[Theorem~V.40.1]{Rogers/Williams:1994} shows that pathwise unqiueness holds for any $E$-valued solution to~\eqref{eq:SDE}. This in turn implies uniqueness in law; see \citet[Theorem~V.17.1]{Rogers/Williams:1994}. 
\end{proof}

Our third result, in combination with Theorems~\ref{TNEW:unique} and \ref{T:unique11}, yields uniqueness in a wide range of cases that are encountered in applications. The setup is the following. We assume that any $E$-valued solution to~\eqref{eq:SDE} can be partitioned as $X=(Y,Z)$, where $Y$ is an autonomous $m$-dimensional diffusion with closed state space $E_Y\subseteq\R^m$, $Z$ is $n$-dimensional, and $m+n=d$. That is, $(Y,Z)$ solves the stochastic differential equation
\begin{align}
\d Y_t &= b_Y(Y_t)\,\d t \ + \ \sigma_Y(Y_t)\,\d W_t \label{eq:SDE_Y} \\
\d Z_t &= b_Z(Y_t,Z_t)\,\d t \ + \ \sigma_Z(Y_t,Z_t)\,\d W_t, \label{eq:SDE_Z}
\end{align}
for polynomials $b_Y:\R^m\to\R^m$ and $b_Z:\R^m\times\R^n\to\R^n$ of degree one,  continuous maps $\sigma_Y:\R^m\to\R^{m\times d}$ and $\sigma_Z:\R^m\times\R^n\to \R^{n\times d}$, and where $Y$ takes values in $E_Y$. The proof of the following theorem is given in Section~\ref{secTunique2}.

\begin{theorem} \label{T:unique2}
Assume that uniqueness in law for $E_Y$-valued solutions to~\eqref{eq:SDE_Y} holds, and that $\sigma_Z$ is locally Lipschitz in~$z$ locally in~$y$ on~$E$. That is, for each compact subset $K\subseteq E$, there exists a constant~$\kappa$ such that for all $(y,z,y',z')\in K\times K$,
\begin{equation}\label{T:unique2Ass}
   \| \sigma_Z(y,z) - \sigma_Z(y',z') \| \le \kappa \|z-z'\|.
\end{equation}
Then uniqueness in law for $E$-valued solutions to~\eqref{eq:SDE} holds.
\end{theorem}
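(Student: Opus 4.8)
The plan is to exploit the hierarchical structure to reduce uniqueness in law for $X$ to two ingredients: uniqueness in law for the autonomous component $Y$, which is assumed, and a conditional pathwise uniqueness statement for $Z$ given a fixed trajectory of $Y$. First I would take two $E$-valued solutions $X=(Y,Z)$ and $X'=(Y',Z')$ to~\eqref{eq:SDE} started at the same point $(y_0,z_0)\in E$, possibly on different stochastic bases with different Brownian motions $W$ and $W'$. Since $Y$ and $Y'$ solve the same autonomous equation~\eqref{eq:SDE_Y} from the same initial point, the hypothesis gives $Y\overset{d}{=}Y'$ as processes. The goal is then to upgrade this to $(Y,Z)\overset{d}{=}(Y',Z')$.

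The key step is a Yamada--Watanabe argument for~\eqref{eq:SDE_Z} with the $Y$-trajectory treated as a given (random but fixed) input. Fix a path $y=(y_t)_{t\ge0}$ in $E_Y$ and consider the SDE $\d Z_t = b_Z(y_t,Z_t)\,\d t + \sigma_Z(y_t,Z_t)\,\d W_t$ for $Z$ constrained to the section $E_{y}:=\{z:(y_t,z)\in E\}$ — actually it is cleanest to keep $(y_t,Z_t)\in E$ directly. On any compact time interval $[0,T]$, the path $y$ is bounded, and the candidate solutions stay in a compact subset $K\subseteq E$ (by a localization/stopping-time argument up to exit times of large balls, which can be removed by letting the radius tend to infinity). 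On $K$, assumption~\eqref{T:unique2Ass} makes $\sigma_Z$ Lipschitz in $z$, and $b_Z$ is affine hence globally Lipschitz, so the classical one-dimensional-style Yamada--Watanabe estimate (in the multidimensional Lipschitz form, e.g.~the standard Gronwall argument on $\E\|Z_t-Z'_t\|^2$, or \citet[Theorem~V.40.1]{Rogers/Williams:1994} applied coordinatewise after localization) yields pathwise uniqueness for $Z$ given $y$: two solutions driven by the same Brownian motion with the same $Z_0$ and the same $y$ coincide.

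To turn conditional pathwise uniqueness plus uniqueness in law for $Y$ into joint uniqueness in law for $(Y,Z)$, I would invoke the Yamada--Watanabe principle in the ``conditional'' form: uniqueness in law of the driving/autonomous component together with pathwise uniqueness of the dependent component implies uniqueness in law of the pair. Concretely, one realizes $(Y,W)$ (equivalently, $Y$ together with the Brownian motion) on a canonical space; by uniqueness in law for $Y$ this joint law is the same for both solutions once one notes that $W$ can be recovered from $(Y,Z)$ on the support of $\sigma_Y$ or, more robustly, augments the Brownian motion so that $(Y,W)$ has a law depending only on $\Gcal$; then the dependent equation~\eqref{eq:SDE_Z} has a pathwise unique strong solution given $(Y,W)$, so $Z$ is a measurable functional $F(Y,W)$ of the input, the same functional for both solutions, whence $(Y,Z)=(Y,F(Y,W))\overset{d}{=}(Y',F(Y',W'))=(Y',Z')$. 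This is exactly the Yamada--Watanabe argument with $Y$ in the role of the ``coefficients'' and $Z$ in the role of the unknown.

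The main obstacle is the measurable-selection/Yamada--Watanabe bookkeeping in this partially decoupled setting: one must be careful that the Brownian motion $W$ drives \emph{both} equations, so the ``input'' to the $Z$-equation is not just $Y$ but the pair $(Y,W)$, and one needs that the joint law of $(Y,W)$ is pinned down — which requires a short argument, since a priori only the law of $Y$ alone is given by hypothesis. The clean way around this is to run the standard Yamada--Watanabe machinery directly: build on a single probability space two independent copies of the solution conditioned on the same $(Y,W)$, use conditional pathwise uniqueness to show they agree a.s., and conclude that $Z$ is adapted to (a completion of) the filtration generated by $(Y,W)$; combined with uniqueness in law for the autonomous $Y$-equation — for which $W$ is genuinely the driving noise — this forces the law of $(Y,W)$, hence of $(Y,Z)$, to be unique. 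The remaining details (localization to remove the compactness assumption, verifying that stopped solutions still solve stopped equations) are routine.
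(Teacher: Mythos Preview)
Your overall strategy matches the paper's: use a Yamada--Watanabe coupling for the $Z$-component, conditioned on the pair $(Y,W)$, and then combine with uniqueness for the autonomous $Y$-equation. The Gronwall/BDG step showing $Z=Z'$ once both are driven by the same $(Y,W)$ is exactly what the paper does after localization, and your identification of the key obstacle --- that one needs the \emph{joint} law of $(Y,W)$ to be uniquely determined, not just the law of $Y$ --- is spot on.

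The one genuine gap is precisely at that obstacle. You write that uniqueness in law for the $Y$-equation ``forces the law of $(Y,W)$\ldots to be unique'' because ``$W$ is genuinely the driving noise,'' but this is not automatic: $W$ is $d$-dimensional while $\sigma_Y$ is $m\times d$, so in general the components of $W$ not ``seen'' by $Y$ could a priori carry extra randomness correlated with $Y$ in different ways across different weak solutions. The passage from weak uniqueness of $Y$ to joint uniqueness of $(Y,W)$ is a nontrivial theorem. The paper isolates this step as a separate lemma and invokes \citet[Theorem~3.1]{Cherny:2002}, after first modifying the coefficients to vanish off $E_Y$ so that any solution started in $E_Y$ stays there (this reduction to an $\R^m$-valued SDE is needed because Cherny's result is stated for equations on full space). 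Once that lemma is in place, the rest of your sketch goes through essentially verbatim: form regular conditional laws $Q^i(\d z;w,y)$, couple on the product space under $\pi(\d w,\d y)Q^1(\d z;w,y)Q^2(\d z';w,y)$, and conclude $Z=Z'$ by the localized Lipschitz estimate.
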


\section{Existence and boundary attainment} \label{secExBa}

In this section we discuss existence of $E$-valued solutions to~\eqref{eq:SDE}, and give conditions under which the boundary of the state space is attained. The results are stated and proved using some basic concepts from algebra and algebraic geometry. Section~\ref{A:alg} provides a review of the required notions.

Existence of a solution to~\eqref{eq:SDE} with values in $\R^d$ is well known to hold under linear growth conditions; see for instance~\citet[Theorem~IV.2.4]{Ikeda/Watanabe:1981}. The problem at hand thus boils down to finding conditions under which a solution to~\eqref{eq:SDE} takes values in $E$. This is a stochastic invariance problem. In Section~\ref{appL:nonneg} we discuss necessary and sufficient conditions for nonnegativity of certain It\^o processes, which is the basic tool we use for proving stochastic invariance.

We henceforth assume that the state space $E$ is a {\em basic closed semialgebraic set}. Specifically, let $\Pcal$ and $\Qcal$ be finite collections of polynomials on $\R^d$, and define
\begin{equation}\label{eq:E}
E =  \left\{x\in M\, :\ p(x)\ge 0 \text{ for all } p\in\Pcal \right\}
\end{equation}
where
\begin{equation}\label{eq:M}
M = \left\{x\in\R^d\, :\ q(x)=0 \text{ for all } q\in\Qcal\right\}.
\end{equation}
In particular, if $\Qcal=\emptyset$ then $M=\R^d$. The following result provides simple necessary conditions for the invariance of $E$ with respect to~\eqref{eq:SDE}.

\begin{theorem}\label{T:neces}
Suppose there exists an $E$-valued solution to~\eqref{eq:SDE} with $X_0=x$, for any $x\in E$. Then
\begin{enumerate}
\item\label{P:neces_p} $a\nabla p=0$ and $\Gcal p\ge 0$ on $E\cap\{p=0\}$ for each $p\in\Pcal$;
\item\label{P:neces_q} $a\nabla q=0$ and $\Gcal q=0$ on $E$ for each $q\in\Qcal$.
\end{enumerate}
\end{theorem}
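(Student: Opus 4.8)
The plan is to derive the two sets of conditions by probing the candidate solution $X$ with well-chosen test functions and invoking the positive maximum principle already established in Lemma~\ref{L:maxprinciple}, together with a second-order refinement of it. First I would treat the equality constraints in~\ref{P:neces_q}. Fix $q\in\Qcal$ and $x\in E$. Since $X$ stays in $E\subseteq M$, we have $q(X_t)=0$ for all $t$, so applying It\^o's formula to $q(X_t)$ shows that the finite-variation part $\int_0^t\Gcal q(X_s)\,\d s$ and the local martingale part $\int_0^t\nabla q(X_s)^\top\sigma(X_s)\,\d W_s$ must both vanish identically. Vanishing of the martingale part forces its quadratic variation $\int_0^t\nabla q(X_s)^\top a(X_s)\nabla q(X_s)\,\d s$ to be zero, and since $a$ is positive semidefinite this gives $a(X_s)\nabla q(X_s)=0$, and in particular, evaluating at $s=0$, $a(x)\nabla q(x)=0$; running over all starting points $x\in E$ yields $a\nabla q=0$ on $E$. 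Similarly, vanishing of the drift part, differentiated at $t=0$, gives $\Gcal q(x)=0$, so $\Gcal q=0$ on $E$.

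Next I would handle the inequality constraints in~\ref{P:neces_p}. Fix $p\in\Pcal$ and a point $\overline x\in E\cap\{p=0\}$. Since $p\ge 0$ on $E$ and $p(\overline x)=0$, the point $\overline x$ is a minimizer of $p$ over $E$, hence a maximizer of $-p$. Lemma~\ref{L:maxprinciple} applied to $-p$ gives $\Gcal(-p)(\overline x)\le 0$, i.e.\ $\Gcal p(\overline x)\ge 0$; since $\overline x$ was arbitrary, $\Gcal p\ge 0$ on $E\cap\{p=0\}$. To obtain $a\nabla p=0$ on this set, I use a second-order argument: for a suitable constant $c>0$, consider the function $f=p-c\,p^2$, which still has $\overline x$ as a local maximizer over a small neighborhood of $\overline x$ in $E$ because near $\overline x$ the values of $p$ are small and nonnegative (a strictly-local version of Lemma~\ref{L:maxprinciple} applies after localization by a stopping time). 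Computing $\Gcal f(\overline x) = \Gcal p(\overline x) - c\big(2p(\overline x)\Gcal p(\overline x) + \nabla p(\overline x)^\top a(\overline x)\nabla p(\overline x)\big) = \Gcal p(\overline x) - c\,\nabla p(\overline x)^\top a(\overline x)\nabla p(\overline x)$, using $p(\overline x)=0$. The maximum principle gives $\Gcal f(\overline x)\le 0$; combined with $\Gcal p(\overline x)\ge 0$, letting $c\to\infty$ forces $\nabla p(\overline x)^\top a(\overline x)\nabla p(\overline x)=0$, and positive semidefiniteness of $a(\overline x)$ then gives $a(\overline x)\nabla p(\overline x)=0$.

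I expect the main obstacle to be the localization needed to justify the second-order maximum-principle argument for $f=p-c\,p^2$: unlike the globally defined test functions used for the equality constraints, $f$ need not attain a global maximum over $E$ at $\overline x$, only a local one along trajectories staying near $\overline x$. The fix is to introduce the stopping time $\tau=\inf\{t\ge 0: X_t\notin B\}$ for a small ball $B$ around $\overline x$, argue on the stopped process, and note that $\tau>0$ almost surely by path continuity; the contradiction argument of Lemma~\ref{L:maxprinciple} then goes through verbatim with $t\wedge\tau$ in place of $t\wedge\tau$ relative to the sign of $\Gcal f$. A minor secondary point is ensuring the local martingale parts above are genuine martingales after stopping, which is immediate since $X$ is bounded on $[0,t\wedge\tau]$ and $a,b$ are continuous. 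With these localizations in place the argument is routine.
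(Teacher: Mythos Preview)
Your overall strategy is sound but differs from the paper's. The paper does not use Lemma~\ref{L:maxprinciple} at all here; instead it applies Lemma~\ref{L:nonneg}\ref{L:nonneg:1} directly to the nonnegative It\^o process $Z_t=p(X_t)$ started at $\overline x$, which immediately yields $\Gcal p(\overline x)\ge 0$ and $\nabla p(\overline x)^\top\sigma(\overline x)=0$ (hence $a(\overline x)\nabla p(\overline x)=0$) in one stroke, and then handles $q\in\Qcal$ by applying the same to $\pm q$. Your route via the maximum principle and a quadratic test function is a legitimate alternative and avoids the Girsanov-type argument hidden in the proof of Lemma~\ref{L:nonneg}\ref{L:nonneg:1}; your treatment of part~\ref{P:neces_q} via uniqueness of the semimartingale decomposition is also fine.

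There is, however, a sign slip in your second-order step that makes the argument, as written, fail. With $f=p-cp^2$ and $\overline x\in E\cap\{p=0\}$, for $y\in E$ near $\overline x$ one has $0\le p(y)<1/c$ and hence $f(y)=p(y)\big(1-cp(y)\big)\ge 0=f(\overline x)$: the point $\overline x$ is a local \emph{minimizer} of $f$, not a maximizer. Your inequality $\Gcal f(\overline x)\le 0$ is therefore the wrong way round, and from $\Gcal p(\overline x)\le c\,\nabla p^\top a\nabla p(\overline x)$ nothing follows as $c\to\infty$. The fix is immediate: apply the (localized) maximum principle to $-f=cp^2-p$, for which $\overline x$ \emph{is} a local maximizer over $E\cap B$. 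This yields
\[
\Gcal(-f)(\overline x)=c\,\nabla p(\overline x)^\top a(\overline x)\nabla p(\overline x)-\Gcal p(\overline x)\le 0,
\]
so $0\le \nabla p(\overline x)^\top a(\overline x)\nabla p(\overline x)\le \Gcal p(\overline x)/c$; letting $c\to\infty$ gives $\nabla p^\top a\nabla p(\overline x)=0$ and hence $a(\overline x)\nabla p(\overline x)=0$ since $a=\sigma\sigma^\top$ is positive semidefinite on~$E$. With this correction your localization via the exit time of a small ball is exactly what is needed, and the rest of your outline goes through.
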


\begin{proof}
Pick any $p\in\Pcal$, $x\in E\cap\{p=0\}$, and let $X$ be a solution to~\eqref{eq:SDE} with $X_0=x$. Then $p(X_t)=\int_0^t\Gcal p(X_s)\d s+\int_0^t\nabla p(X_s)^\top \sigma(X_s)\d W_s$ and $p(X)\ge 0$, so {\ref{P:neces_p}} follows by Lemma~\ref{L:nonneg}{\ref{L:nonneg:1}}. To prove {\ref{P:neces_q}} for $q\in\Qcal$, simply apply the same argument to $q$ and~$-q$. 
\end{proof}

The condition $a\nabla p=0$ states, roughly speaking, that at any boundary point of the state space, there can be no diffusive fluctuations orthogonally to the boundary. The condition $\Gcal p\ge 0$ can be interpreted as ``inward-pointing adjusted drift'' at the boundary. The following example shows that it cannot be replaced by a simple ``inward-pointing drift'' condition.

\begin{example}
Consider the bivariate process $(U,V)$ with dynamics
\begin{align*}
\d U_t &= \d W_{1t}						&& U_0\in\R\\
\d V_t &= \alpha \d t + 2\sqrt{V_t}\d W_{2t}		&& V_0\in\R_+,
\end{align*}
where $(W_1,W_2)$ is Brownian motion and $\alpha> 0$. In other words, $U$ is Brownian motion and $V$ is an independent squared Bessel process. The state space is $\R\times\R_+$. Now consider the process $(X,Y)=(U,V-U^2)$. Its dynamics is
\begin{align*}
\d X_t &= \d W_{1t} \\
\d Y_t &= (\alpha-1) \d t - 2X_t \d W_{1t} + 2\sqrt{X_t^2+Y_t}\d W_{2t},
\end{align*}
and its state space is $E=\{(x,y)\in\R^2:x^2+y\ge0\}$, the epigraph of the function $-x^2$. The drift of $(X,Y)$ is $b(x,y)=(0,\alpha-1)$, which points out of the state space at every boundary point, provided $\alpha<1$. Nonetheless, with $p(x,y)=x^2+y$, a calculation yields $\Gcal p(x,y)=\alpha>0$.
\end{example}

As a converse to Theorem~\ref{T:neces}, we now give sufficient conditions for the existence of an $E$-valued solution to~\eqref{eq:SDE}. The proof of the following theorem is given in Section~\ref{secTexistence}.

\begin{theorem}\label{T:existence}
Suppose $E$ satisfies the following geometric and algebraic properties,
\begin{enumerate}[label={\rm(G\arabic*)}]
\item\label{G1} $\nabla r(x)$, $r\in \Qcal$, are linearly independent for all $x\in M$;
\item\label{G2} the ideals generated by $\Qcal\cup\{p\}$ and $M\cap\{p=0\}$ are equal, $(\Qcal\cup\{p\})=\Ical(M\cap\{p=0\})$, for each $p\in\Pcal$;
\end{enumerate}
and the maps $a$ and $b$ satisfy
\begin{enumerate}[label={\rm(A\arabic*)},start=0]
\item\label{T:existence:psd} $a \in\S^d_+$ on $E$;
\item\label{T:existence:p} $a\,\nabla p=0$ on $M\cap\{p=0\}$ and $\Gcal p>0$ on $E\cap\{p=0\}$ for each $p\in\Pcal$;
\item\label{T:existence:q} $a\,\nabla q=0$ and $\Gcal q=0$ on $M$ for each $q\in\Qcal$.
\end{enumerate}
Then $\Gcal$ is polynomial   on $E$, and there exists a continuous map $\sigma:\R^d\to\R^{d\times d}$ with $\sigma\sigma^\top =a$ on $E$ and such that the stochastic differential equation~\eqref{eq:SDE} admits an $E$-valued solution $X$ for any initial law of $X_0$. This solution can be chosen so that it spends zero time in the sets $\{p=0\}$, $p\in\Pcal$. That is,
\begin{equation} \label{T:existence:zt}
\int_0^t \1{p(X_s)=0}\d s = 0 \text{ for all } t\ge0 \text{ and all } p\in\Pcal.
\end{equation}
\end{theorem}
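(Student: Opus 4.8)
The plan is to construct a weak solution to \eqref{eq:SDE} on $\R^d$ using a globally defined, continuous dispersion coefficient of linear growth, and then to show that such a solution can be chosen to remain in $E$; the property that $\Gcal$ is polynomial on $E$ will follow afterwards from the positive maximum principle. The invariance step is the heart of the matter and is where \ref{G1}, \ref{G2}, \ref{T:existence:p}, \ref{T:existence:q} are used, so I would prove invariance first and deduce the polynomial property last.

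\emph{A global dispersion coefficient.} Let $a^+(x)\in\S^d_+$ denote the projection of $a(x)$ onto $\S^d_+$ (negative eigenvalues replaced by $0$). Then $a^+$ is continuous, coincides with $a$ on $E$ by~\ref{T:existence:psd}, and satisfies $\|a^+(x)\|\le\|a(x)\|\le C(1+\|x\|^2)$ since $a$ is quadratic. Its continuous positive-semidefinite square root $\sigma=(a^+)^{1/2}$ is then continuous with $\|\sigma(x)\|\le C'(1+\|x\|)$ and $\sigma\sigma^\top=a$ on $E$. As $b$ is affine, hence also of linear growth, \citet[Theorem~IV.2.4]{Ikeda/Watanabe:1981} furnishes, for any prescribed law of $X_0$, a weak solution $X$ to \eqref{eq:SDE} with values in $\R^d$.

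\emph{Invariance of $E$.} The algebraic input comes as follows. By~\ref{T:existence:p} each component of the polynomial vector $a\nabla p$ vanishes on $M\cap\{p=0\}$, hence lies in $\Ical(M\cap\{p=0\})=(\Qcal\cup\{p\})$ by~\ref{G2}; thus $a\nabla p=p\,g_p+\sum_{r\in\Qcal}h_{p,r}\,r$ for polynomials $g_p,h_{p,r}$, and in particular $\nabla p^\top a\nabla p=p\,c_p$ modulo the ideal $(\Qcal)$. Similarly, by~\ref{T:existence:q} the polynomials $a\nabla q$ and $\Gcal q$ vanish on $M$, and \ref{G1}, which exhibits $M$ as a smooth complete intersection, makes them divisible by the members of $\Qcal$ and lets each $q\in\Qcal$ serve as part of a local coordinate system near $M$. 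Now fix $X$ as above with $X_0=x\in E$ and let $\tau=\inf\{t:X_t\notin E\}$; on the stochastic interval $[0,\tau]$ one has $X_t\in E$ (as $E$ is closed and $X$ continuous), hence $\sigma\sigma^\top(X_t)=a(X_t)$, so that along $X$ the identities above govern the It\^o dynamics. It remains to rule out $\tau<\infty$, which is a local question at $\partial E$: localizing $X$ near a point $x_0\in E$ and straightening, via~\ref{G1} and~\ref{G2}, the coordinates so that $M$ and the constraints $p\in\Pcal$ active at $x_0$ become coordinate hyperplanes, the displacement of $X$ in each active direction is a nonnegative continuous semimartingale whose martingale part has quadratic-variation density $\nabla p^\top a\nabla p=p\,c_p$ (vanishing when $p(X_t)=0$) and whose drift $\Gcal p(X_t)$ is $>0$ there by~\ref{T:existence:p}, while the displacement transverse to $M$ has identically vanishing drift and quadratic variation (since $a\nabla q=0$, $\Gcal q=0$ on $M$, and $X_t\in M$). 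Lemma~\ref{L:nonneg} applied to each active $p(X)$, together with an It\^o argument for the $q(X)$, shows $X$ cannot leave $E$ before leaving the chart; a patching argument over a locally finite cover then gives $\tau=\infty$, so $X$ is $E$-valued, and one passes to general initial laws by conditioning.

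\emph{Zero occupation time and the polynomial property.} For each $p\in\Pcal$ the local-martingale part $N$ of the nonnegative semimartingale $Y=p(X)$ satisfies $\int_0^t\1{Y_s=0}\d\langle N\rangle_s=0$, since its density $\nabla p(X_s)^\top a(X_s)\nabla p(X_s)$ vanishes on $\{p(X_s)=0\}$ ($X_s\in M$ and $a\nabla p=0$ on $M\cap\{p=0\}$), while the drift of $Y$ is $\ge\varepsilon_K>0$ on $\{Y_s=0,\,X_s\in K\}$ for each compact $K$; the standard fact that such a semimartingale spends no Lebesgue time at level $0$ then yields \eqref{T:existence:zt}. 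Finally, since an $E$-valued solution now exists from every $x\in E$, Lemma~\ref{L:maxprinciple} applied to $\pm f$ (each maximized over $E$ at every point of $E$) shows $\Gcal f=0$ on $E$ for every $f\in\Pol(\R^d)$ vanishing on $E$, i.e.\ $\Gcal$ is polynomial on $E$. The main obstacle throughout is the invariance step: coordinating the equality constraint with the possibly several simultaneously active inequality constraints at a boundary stratum, and doing so although $\sigma\sigma^\top$ matches $a$ only on $E$—both difficulties being absorbed by the coordinate straightenings that~\ref{G1} and~\ref{G2} make available (equivalently, by the divisibility $a\nabla p=p\,g_p+\cdots$ they yield), which localize the problem and reduce each active constraint to a one-dimensional nonnegativity statement handled by Lemma~\ref{L:nonneg}.
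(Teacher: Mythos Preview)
Your overall strategy---extend the coefficients globally, obtain a weak solution on $\R^d$, then prove invariance of $E$ via Lemma~\ref{L:nonneg}---has a genuine gap at the invariance step, and it is precisely the point where the paper's proof departs from yours.

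\textbf{The circularity in the $M$-invariance.} Your modified diffusion matrix is $a^+=\pi\circ a$, but you keep the original drift $b$. On $M\setminus E$ the matrix $a$ need not be positive semidefinite, so $a^+\ne a$ there, and the drift of $q(X)$ under your operator is
\[
\tfrac12\tr\!\big(a^+(X_t)\nabla^2 q(X_t)\big)+b(X_t)^\top\nabla q(X_t)
=\Gcal q(X_t)+\tfrac12\tr\!\big((a^+-a)(X_t)\nabla^2 q(X_t)\big),
\]
which is \emph{not} zero in general on $M\setminus E$, even though $\Gcal q=0$ there by~\ref{T:existence:q}. Hence your sentence ``the displacement transverse to $M$ has identically vanishing drift and quadratic variation (since \ldots\ and $X_t\in M$)'' is circular: you assert $X_t\in M$ in order to justify the very identities that would force $X$ to stay on $M$. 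As soon as your $\R^d$-valued solution leaves $E$, it can drift off $M$, and then the divisibility $a\nabla p=p\,g_p$ modulo $(\Qcal)$ no longer controls the quadratic variation of $p(X)$ along the path. The local ``straightening'' you propose does not repair this: in the straightened coordinates the transverse drift is still nonzero off $E$. The paper handles this by an explicit drift correction (their Lemma in the proof of the theorem constructing $\widehat b=b+c$ with $\widehat\Gcal q=0$ on all of $M$), and then solves the martingale problem for $\widehat\Gcal$ directly on a neighborhood $E_0$ of $E$ \emph{inside} $M$ via \citet[Theorem~4.5.4]{Ethier/Kurtz:2005}. Thus $M$-invariance is built into the construction; Ikeda--Watanabe on $\R^d$ is not used.

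\textbf{The local-time hypothesis of Lemma~\ref{L:nonneg}.} You invoke Lemma~\ref{L:nonneg}\ref{L:nonneg:2} for $p(X)$, noting only that the quadratic-variation density $\nabla p^\top a\nabla p$ vanishes on $\{p=0\}$. But Lemma~\ref{L:nonneg}\ref{L:nonneg:2} requires $L^0=0$, not merely $\nu=0$ on $\{Z=0\}$; the paper's Example~A.3 shows the latter is strictly weaker. The paper obtains $L^0=0$ via the occupation density formula together with the bound $\nabla p^\top\widehat a\,\nabla p\le C|p|$ locally on $M$, which in turn uses the divisibility $a\nabla p=hp$ on $M$ and the spectral representation $\widehat a=S\Lambda^+S^\top$. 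You have the right divisibility ingredient, but the actual local-time argument---and the fact that it must be carried out for $\widehat a$, not $a$---is missing from your sketch. (Your later ``zero occupation time'' paragraph repeats the same conflation.)

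In short: the construction step must produce an $M$-valued solution before the nonnegativity lemma can be applied, and this requires the drift correction that compensates for replacing $a$ by $a^+$; your proposal omits this and the argument becomes circular.
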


Conditions~\ref{T:existence:p}--\ref{T:existence:q} should be contrasted with the necessary conditions of Theorem~\ref{T:neces}. The latter are somewhat weaker, since they only make statements about $a$ and $b$ on $E$ rather than $M$, and since the inequality in~Theorem~\ref{T:neces}\ref{P:neces_p} is weak. Theorem~\ref{T:existence} can be generalized to allow for weak inequality in \ref{T:existence:p}, at the cost of allowing absorption of the process at the boundary. We do not consider this generalization here.

Condition~\ref{G1} implies that $M$ is an algebraic submanifold in $\R^d$ of dimension $d-|\Qcal|$. The least obvious condition is arguably \ref{G2}. The crucial implication of \ref{G2} is that any polynomial $f$ that vanishes on $M\cap\{p=0\}$ has a representation $f=h\, p$ on $M$ for some polynomial $h$. In conjunction with \ref{T:existence:p} this implies that $a(x)\nabla p(x)$ decays like $p(x)$ as $x\in E$ approaches the boundary set $E\cap\{p=0\}$, for $p\in\Pcal$. This allows one to prove that the local time of $p(X)$ at level zero vanishes, which makes Lemma~\ref{L:nonneg} applicable; see Section~\ref{secTexistence} for the details.

Condition~\ref{G2} is also the least straightforward to verify. We therefore present two sufficient conditions that are easier to check in concrete examples. The first condition is useful when $M=\R^d$, in which case each ideal appearing on the left hand side in \ref{G2} is generated by a single polynomial. This covers many interesting examples, yet yields conditions that are easy to verify in practice. A proof of the following result can be found in \citet[Theorem~4.5.1]{Bochnak/Coste/Roy:1998}.

\begin{lemma} \label{L:irred}
Let $p\in\Pol(\R^d)$ be an irreducible polynomial and $\Vcal(p)$ its zero set. Then $(p)=\Ical(\Vcal(p))$ if and only if $p$ changes sign on $\R^d$, that is, $p(x)p(y)<0$ for some $x,y\in\R^d$.
\end{lemma}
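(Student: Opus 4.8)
The plan is to cite the stated result from real algebraic geometry, since this is exactly Theorem 4.5.1 in \citet{Bochnak/Coste/Roy:1998}, but to sketch the two implications so that the reader sees why sign change is the right hypothesis. Throughout, $p$ is irreducible in $\Pol(\R^d)$, $\Vcal(p)=\{p=0\}$ is its real zero set, and $\Ical(\Vcal(p))$ is the ideal of polynomials vanishing on $\Vcal(p)$; the inclusion $(p)\subseteq\Ical(\Vcal(p))$ is trivial, so the content is the reverse inclusion versus the sign-change dichotomy.

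For the easy direction, suppose $p$ does not change sign, say $p\ge 0$ on $\R^d$. Then $p$ vanishes on $\Vcal(p)$ together with all its first-order partial derivatives (at any zero, $p$ attains a global minimum, so $\nabla p=0$ there). One would like to conclude that $p$ is a square times a unit, contradicting irreducibility unless $p$ is constant or $\Vcal(p)=\emptyset$; the clean way is to pass to the complexification and use that $(p)=\Ical_{\R}(\Vcal(p))$ would force $\Vcal_{\C}(p)$ to have a smooth real point of full dimension $d-1$, which is incompatible with $p\ge 0$. Concretely: if $(p)=\Ical(\Vcal(p))$ then $\Vcal(p)$ is Zariski dense in the complex hypersurface $\{p=0\}\subseteq\C^d$, and a real point where $p$ changes sign must exist because near a smooth real point of a real hypersurface of a reduced polynomial, $p$ takes both signs. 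Hence no sign change implies $(p)\subsetneq\Ical(\Vcal(p))$ (the extreme case $\Vcal(p)=\emptyset$, e.g. $p=1+\|x\|^2$, giving $\Ical(\Vcal(p))=\Pol(\R^d)\ne (p)$).

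For the main direction, assume $p(x_0)<0<p(x_1)$ for some $x_0,x_1$. Let $f\in\Ical(\Vcal(p))$; we must show $p\mid f$. Since $p$ is irreducible, it suffices to show $\Vcal_{\C}(p)\subseteq\Vcal_{\C}(f)$ by the complex Nullstellensatz (irreducibility makes $(p)$ prime, hence radical). The key geometric input is that the sign change forces $\Vcal(p)$ to contain a smooth real point of the complex hypersurface: walking along a path from $x_0$ to $x_1$, $p$ crosses zero, and by a Sard/implicit-function argument one locates a real point $x^*$ with $p(x^*)=0$ and $\nabla p(x^*)\ne 0$. Near $x^*$ the real zero set is a $(d-1)$-dimensional real-analytic manifold, which is therefore Zariski dense in the irreducible complex hypersurface $\Vcal_{\C}(p)$ (a proper complex subvariety of $\C^d$ cannot contain a real $(d-1)$-manifold and still be a proper subvariety of an irreducible hypersurface). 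Since $f$ vanishes on this real manifold, it vanishes on its Zariski closure $\Vcal_{\C}(p)$, giving $p\mid f$ as desired.

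The main obstacle is the density step: showing that the germ of real points near a smooth real zero is Zariski dense in the complex hypersurface. This is where irreducibility is essential and where one genuinely uses real algebraic geometry rather than soft arguments; the cleanest route is to invoke the local structure of real points on a complex variety (the real points near a smooth real point form a set of real dimension equal to the complex dimension), from which Zariski density in the irreducible complex hypersurface is immediate. Given that all of this is packaged in \citet[Theorem~4.5.1]{Bochnak/Coste/Roy:1998}, the write-up in the paper simply cites that reference.
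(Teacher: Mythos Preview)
Your proposal is correct and matches the paper's approach exactly: the paper's entire proof is the single sentence ``A proof of the following result can be found in \citet[Theorem~4.5.1]{Bochnak/Coste/Roy:1998}.'' Your added sketch of the two implications is a reasonable heuristic supplement, and you correctly flag that the Zariski-density step is where the genuine real-algebraic-geometry content lies; just note that your passage through the complex Nullstellensatz needs the (true but unstated) fact that a polynomial irreducible over~$\R$ is square-free over~$\C$, and that the existence of a smooth real zero from a sign change is itself nontrivial and is part of what the cited theorem packages.
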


The second condition applies when the ideals generated by the families $\Qcal\cup\{p\}$ with $p\in\Pcal$ are prime and of full dimension.

\begin{lemma} \label{L:prime}
For $p\in\Pcal$, assume that the ideal $(\Qcal\cup\{p\})$ is prime with dimension $d-1-|\Qcal|$, and that there exists some $x\in M\cap\{p=0\}$ such that the vectors $\nabla r(x)$, $r\in\Qcal\cup\{p\}$, are linearly independent. Then $(\Qcal\cup\{p\})=\Ical(M\cap\{p=0\})$.
\end{lemma}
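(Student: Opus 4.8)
The plan is to invoke the standard dictionary between prime ideals, irreducible affine varieties, and smooth points, together with the Nullstellensatz. Write $J=(\Qcal\cup\{p\})$ and $V=M\cap\{p=0\}=\Vcal(J)$. Since $J$ is assumed prime, $V$ is an irreducible affine variety, and by the Nullstellensatz $\Ical(V)=\sqrt{J}=J$ provided we are working over $\C$; over $\R$ one must be slightly more careful, but the standard reference (e.g.\ \citet{Bochnak/Coste/Roy:1998}) gives that for a prime ideal $J\subseteq\Pol(\R^d)$ one has $\Ical(\Vcal_\R(J))=J$ as soon as the real variety $\Vcal_\R(J)$ contains a nonsingular point of the complex variety whose local dimension equals $\dim J$. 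So the real content of the lemma is exactly the hypothesis that some $x\in V$ is a point where $\nabla r(x)$, $r\in\Qcal\cup\{p\}$, are linearly independent: this is precisely the Jacobian criterion certifying that $x$ is a smooth point of $\Vcal(J)$ of codimension $|\Qcal|+1$, i.e.\ of dimension $d-1-|\Qcal|$, matching the assumed Krull dimension of $J$.

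First I would recall that for a prime ideal $J$ of dimension $e:=d-1-|\Qcal|$, the variety $\Vcal(J)$ is irreducible of dimension $e$, and the singular locus is a proper closed subvariety, hence of dimension strictly less than $e$; in particular the smooth locus is nonempty and Zariski-dense. Second, I would check that the given point $x$ is a smooth point: the Jacobian of the generating family $\Qcal\cup\{p\}$ at $x$ has rank $|\Qcal|+1=d-e$ by the linear independence hypothesis, which is the maximal possible rank consistent with $\dim_x \Vcal(J)=e$, so by the Jacobian criterion $x$ is nonsingular and $\Vcal(J)$ has local dimension $e$ there. Third, with a genuine real smooth point of the correct dimension in hand, I would apply the real Nullstellensatz / the structure theory in \citet[Ch.~4]{Bochnak/Coste/Roy:1998} (this is essentially the same input used for Lemma~\ref{L:irred}): a prime ideal $J$ possessing a real point that is nonsingular of dimension $\dim J$ is automatically real, i.e.\ $\Ical(\Vcal_\R(J))=J$. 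Since $\Vcal_\R(J)=M\cap\{p=0\}$, this is the desired equality $(\Qcal\cup\{p\})=\Ical(M\cap\{p=0\})$.

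The main obstacle, and the only place where care is genuinely required, is the passage from ``complex smooth point'' to ``the ideal is real,'' i.e.\ ensuring that the existence of one real nonsingular point of full dimension forces $J=\Ical(\Vcal_\R(J))$ rather than merely $J\subseteq\Ical(\Vcal_\R(J))\subseteq\sqrt[\R]{J}$. This is exactly where primality is used: for a prime $J$, being real is equivalent to $\Vcal_\R(J)$ being Zariski-dense in $\Vcal_\C(J)$, and the presence of a real smooth point of dimension $\dim J$ guarantees this density because the real points near $x$ already form an $e$-dimensional real-analytic manifold, whose Zariski closure must be all of the irreducible $\Vcal_\C(J)$. I would phrase this step by citing the relevant statement in \citet{Bochnak/Coste/Roy:1998} rather than reproving it, in the same spirit as the citation accompanying Lemma~\ref{L:irred}. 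Everything else—translating ``dimension $d-1-|\Qcal|$'' into codimension, and reading off the Jacobian rank—is routine.
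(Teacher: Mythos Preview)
Your proposal is correct and matches the paper's approach: the paper's proof is a one-line citation of \citet[Proposition~3.3.16]{Bochnak/Coste/Roy:1998}, which is precisely the statement you identify---that a prime ideal $J\subseteq\Pol(\R^d)$ with a real point at which the Jacobian of the generators has rank equal to the codimension $d-\dim J$ is real, i.e.\ $J=\Ical(\Vcal_\R(J))$. Your exposition simply unpacks why the hypotheses of the lemma feed directly into that proposition.
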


\begin{proof}
This follows directly from \citet[Proposition~3.3.16]{Bochnak/Coste/Roy:1998}. 
\end{proof}

\begin{remark}
Stochastic invariance problems have been studied by a number of authors; see \cite{DaPrato/Frankowska:2004}, \cite{Filipovic/Tappe/Teichmann:2014}, among many others. The approach in these papers is to impose an ``inward-pointing Stratonovich drift'' condition. This breaks down for polynomial   diffusions. Indeed, consider the squared Bessel process
\[
\d X_t = \alpha\, \d t + 2\sqrt{X_t}\,\d W_t,
\]
which is an $\R_+$-valued affine process for $\alpha\ge0$. The stochastic integral cannot always be written in Stratonovich form, since $\sqrt{X}$ fails to be a semimartingale for $0<\alpha<1$. If nonetheless one formally computes the Stratonovich drift, one obtains $\alpha-1$, suggesting that $\alpha\ge 1$ is needed for stochastic invariance of $\R_+$. However, it is well-known that $\alpha\ge 0$ is the correct condition. Our approach is rather in the spirit of \cite{DaPrato/Frankowska:2007} who however focus on stochastic invariance of closed convex sets.
\end{remark}

Apart from existence, Theorem~\ref{T:existence} asserts that $X$ spends zero time in the sets $\{p=0\}$, $p\in\Pcal$, which roughly speaking correspond to boundary segments of the state space. It does not, however, tell us whether these sets are actually hit. The purpose of the following theorem is to give necessary and sufficient conditions for this to occur. The proof is given in Section~\ref{appT:bdry}. The vector $h$ of polynomials appearing in the theorem exists if~\ref{G2} and~\ref{T:existence:p} are satisfied.

\begin{theorem} \label{T:boundary}
Let $X$ be an $E$-valued solution to~\eqref{eq:SDE} satisfying~\eqref{T:existence:zt}. Consider $p\in\Pcal$ and let $h$ be a vector of polynomials such that $a\,\nabla p = h\,p$ on $M$.
\begin{enumerate}
\item\label{T:boundary:1} Assume there exists a neighborhood $U$ of $E\cap\{p=0\}$ such that
\begin{equation} \label{T:bdry:1}
2\,\Gcal p - h^\top\nabla p\ge0 \quad\text{on}\quad E\cap U.
\end{equation}
Then $p(X_t)>0$ for all $t>0$.
\item\label{T:boundary:2} Assume~\ref{G2} holds and
\[
2\,\Gcal p - h^\top\nabla p=0 \quad\text{on}\quad M\cap\{p=0\}.
\]
Then $p(X_t)>0$ for all $t>0$.
\item\label{T:boundary:3} Let $\overline x\in E\cap\{p=0\}$ and assume
\[
\Gcal p(\overline x)\ge 0 \qquad\text{and}\qquad 2\,\Gcal p({\overline x}) - h({\overline x})^\top\nabla p({\overline x})<0.
\]
Then for any $T>0$ there exists $\varepsilon>0$ such that if $\|X_0-\overline x\|<\varepsilon$ almost surely, then $p(X_t)=0$ for some $t\le T$ with positive probability.
\end{enumerate}
\end{theorem}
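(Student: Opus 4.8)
The plan is to reduce everything to the one–dimensional semimartingale $Y_t:=p(X_t)$. By It\^o's formula and $\sigma\sigma^\top=a$ on $E$,
\[
\d Y_t=\Gcal p(X_t)\,\d t+\nabla p(X_t)^\top\sigma(X_t)\,\d W_t ,
\]
and since $a\,\nabla p=h\,p$ on $M\supseteq E$ the quadratic variation is $\d\langle Y\rangle_t=\nabla p(X_t)^\top a(X_t)\nabla p(X_t)\,\d t=\nu(X_t)\,Y_t\,\d t$, where $\nu:=h^\top\nabla p\in\Pol(\R^d)$. Thus near $E\cap\{p=0\}$ the process $Y$ behaves like a squared–Bessel–type diffusion, and the borderline test function is the logarithm: on $\{p>0\}$,
\[
\Gcal(\log\circ p)=\frac{\Gcal p}{p}-\frac{\nu}{2p}=\frac{2\,\Gcal p-h^\top\nabla p}{2\,p}.
\]
All three parts are governed by the sign of $2\,\Gcal p-h^\top\nabla p$ along the boundary.

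For~\ref{T:boundary:1} and~\ref{T:boundary:2} I would argue that $Y$ never reaches $0$, and then upgrade to $Y_t>0$ for $t>0$ even when $Y_0=0$: by~\eqref{T:existence:zt} the set $\{t:Y_t>0\}$ has full Lebesgue measure, so the non-attainment argument can be restarted from any small time $s$ with $Y_s>0$. Suppose $Y$ hits $0$ at a finite time $\tau_0$. Then $X_{\tau_0}\in E\cap\{p=0\}\subseteq U$, so there are, by continuity, a rational $q<\tau_0$ and an integer $R$ with $X_t\in U\cap B_R$ and $0<Y_t<c$ for all $t\in[q,\tau_0]$, where $c>0$ is fixed so small that $\{x\in E:p(x)\le c\}\cap\overline B_R\subseteq U$ (possible since $E\cap\{p=0\}\cap\overline B_R$ is compact). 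Stopping $(X,Y)$ at the exit from $(U\cap B_R)\times(0,c)$: in case~\ref{T:boundary:1} the drift $\Gcal(\log\circ p)$ is $\ge0$ on $E\cap U$, so $\log Y$ is a local submartingale bounded above by $\log c$; hence $\log c-\log Y$ is a nonnegative supermartingale, which has a.s.\ finite supremum, so $\inf Y>0$ a.s.\ on $[q,\tau_0]$ — contradicting $Y_{\tau_0}=0$. In case~\ref{T:boundary:2}, condition~\ref{G2} and $a\,\nabla p=h\,p$ imply that $2\,\Gcal p-h^\top\nabla p$, which vanishes on $M\cap\{p=0\}$, lies in $\Ical(M\cap\{p=0\})=(\Qcal\cup\{p\})$, hence equals $g\,p$ on $M$ for some $g\in\Pol(\R^d)$; then $\Gcal(\log\circ p)=g/2$ is bounded on bounded subsets of $E$, so on $[q,\tau_0]$ the process $\log Y$ is a continuous local martingale plus a finite-variation term of locally bounded total variation, and such a process cannot converge to $-\infty$ in finite time (a continuous local martingale, being a time change of Brownian motion, does not) — again contradicting $Y_{\tau_0}=0$. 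A union over $q\in\Q$ and $R\in\N$ completes the proof in both cases. (It\^o's formula is applied to $\log\circ p$, which is only $C^2$ on $\{p>0\}$, via localization at the hitting times of levels $1/n$; this is routine.)

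For~\ref{T:boundary:3} I would build a barrier. From $0\le\Gcal p(\overline x)$ and $2\,\Gcal p(\overline x)-h(\overline x)^\top\nabla p(\overline x)<0$ we get $\nu(\overline x)>0$, so by continuity there is $\rho>0$ with $\nu>0$ and $\Gcal p<\tfrac12\nu$ on the compact set $K:=E\cap\overline B_\rho(\overline x)$. Choosing $\theta\in(0,1)$ small enough that $\Gcal p-\tfrac12(1-\theta)\nu\le-\gamma<0$ on $K$ (possible since $\Gcal p-\tfrac12\nu$ attains a negative maximum on $K$ while $\nu$ is bounded there), one gets, on $\{p>0\}\cap K$,
\[
\Gcal(p^\theta)=\theta\,p^{\theta-1}\Bigl(\Gcal p-\tfrac12(1-\theta)\nu\Bigr)\le-\gamma\theta\,p^{\theta-1}\le-\gamma\theta\,c^{\theta-1}<0\qquad\text{whenever } p\le c ,
\]
for any fixed $c>0$. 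Adding $\lambda\|x-\overline x\|^2$ with $\lambda>0$ small perturbs the drift only boundedly, so $v(x):=p(x)^\theta+\lambda\|x-\overline x\|^2$ satisfies $\Gcal v\le-\delta<0$ on $D\cap\{p>0\}$, where $D:=\{x\in E:\|x-\overline x\|<\rho,\ p(x)<c\}$; moreover $v\ge0$ and continuous on $K$, $v(\overline x)=0$, and $v\ge\alpha>0$ on the relative boundary $\partial_E D\subseteq\{\|x-\overline x\|=\rho\}\cup\{p=c\}$, with $\alpha:=\min\{\lambda\rho^2,c^\theta\}$. Letting $\tau_0:=\inf\{t:p(X_t)=0\}$ and $\tau_D$ the exit time of $X$ from $D$, the process $v(X_{t\wedge\tau_0\wedge\tau_D})+\delta\,(t\wedge\tau_0\wedge\tau_D)$ is a nonnegative local supermartingale, hence a supermartingale, and optional stopping gives $\E[\tau_0\wedge\tau_D]\le\E[v(X_0)]/\delta$ and $\alpha\,\P(\tau_D<\tau_0)\le\E[v(X_{\tau_0\wedge\tau_D})]\le\E[v(X_0)]$. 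Therefore
\[
\P(\tau_0\le T)\ \ge\ 1-\P(\tau_D<\tau_0)-\P(\tau_0\wedge\tau_D>T)\ \ge\ 1-\E[v(X_0)]\Bigl(\frac1\alpha+\frac1{\delta T}\Bigr),
\]
which is strictly positive once $\|X_0-\overline x\|<\varepsilon$ with $\varepsilon$ small enough that $\sup_{\overline B_\varepsilon(\overline x)}v<\bigl(\tfrac1\alpha+\tfrac1{\delta T}\bigr)^{-1}$; and on $\{\tau_0\le T\}$ one has $p(X_{\tau_0})=0$.

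The step I expect to be the main obstacle is the bookkeeping caused by the absence of the Markov property in~\ref{T:boundary:1}--\ref{T:boundary:2}: one must verify that immediately before any putative hitting time of $\{p=0\}$ the process is confined to a bounded piece of $U$, and then patch the local argument together over a countable family of rational restart times. In~\ref{T:boundary:3} the delicate point is instead the explicit construction of $v$, where the exponent $\theta$ has to be tuned against the strict sub-Feller inequality $2\,\Gcal p(\overline x)<h(\overline x)^\top\nabla p(\overline x)$ (using $\Gcal p(\overline x)\ge0$ only to ensure $h(\overline x)^\top\nabla p(\overline x)>0$), while the quadratic term $\lambda\|x-\overline x\|^2$ must be present — to penalize ``sideways'' exits from $D$ — yet small enough to preserve $\Gcal v<0$.
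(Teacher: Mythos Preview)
Your arguments for~\ref{T:boundary:1} and~\ref{T:boundary:2} are correct and rest on the same idea as the paper's: apply It\^o to $\log p(X)$ and use that a nonnegative supermartingale cannot explode. The paper organizes this slightly differently---instead of restarting from a rational time $q$ and localizing to $U\cap B_R$, it absorbs the contribution from $E\setminus U$ into a finite-variation correction $A_t=\int_0^t \mathbf 1_{\{X_s\notin U\}}\,\frac{2\Gcal p-h^\top\nabla p}{2p}(X_s)\,\d s$, which is finite because $p$ is bounded away from zero on $E\cap U^c\cap\overline B_n$; then $\log p(X)-A$ is a global local submartingale on $[0,\tau)$. For~\ref{T:boundary:2} the paper additionally observes, via a degree count, that the polynomial $g$ in your representation $2\Gcal p-h^\top\nabla p=g\,p$ on $M$ is in fact a constant, so $\log p(X)-\tfrac{g}{2}t$ is a genuine local martingale; your argument, which only uses that $g$ is locally bounded, is equally valid and slightly more robust.

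Your approach to~\ref{T:boundary:3} is genuinely different from the paper's and works. The paper proceeds by a time change and comparison with a squared Bessel process: on a neighborhood of $\overline x$ one has $2\Gcal p\le(1-\delta)\nu$, so after the time change $A_t=\tfrac14\int_0^t\nu(X_s)\,\d s$ the process $p(X_{\gamma_u})$ has drift $\le 2-2\delta$ and diffusion $2\sqrt{\,\cdot\,}$, hence is dominated by a ${\rm BESQ}(2-2\delta)$ process $Z$; known hitting-time asymptotics for $Z$ together with an exit-probability bound for $X$ (to ensure the time change covers a fixed interval) then give $\P(\tau_0\le T)>0$ for $X_0$ close to $\overline x$. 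Your Lyapunov construction $v=p^\theta+\lambda\|x-\overline x\|^2$ replaces the comparison theorem and the explicit BESQ formula by a direct supermartingale estimate, at the cost of having to tune two parameters ($\theta$ against the strict inequality, $\lambda$ to penalize lateral exits without spoiling $\Gcal v<0$). The paper's route makes the squared-Bessel heuristic transparent and reuses standard one-dimensional facts; yours is self-contained and yields the quantitative bound $\P(\tau_0\le T)\ge 1-\E[v(X_0)](\alpha^{-1}+(\delta T)^{-1})$ directly. One technical point worth flagging in your write-up: It\^o's formula for $p^\theta$ is only valid on $\{p>0\}$, so the supermartingale property of $v(X_{t\wedge\tau_0\wedge\tau_D})+\delta(t\wedge\tau_0\wedge\tau_D)$ should be obtained by first stopping at $\inf\{t:p(X_t)\le 1/n\}$ and then passing to the limit via Fatou (this is implicit in your ``nonnegative local supermartingale, hence supermartingale'' step, but deserves a sentence).
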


As a simple example, we may apply Theorem~\ref{T:boundary} to the scalar square-root diffusion $dX_t=(b+\beta X_t)\,dt+\sigma\sqrt{X_t}\,dB_t$ with parameters $b,\sigma> 0$ and $\beta<0$, and where $B$ is a one-dimensional Brownian motion. In this case $E=\R_+$, and $\Pcal$ consists of the single polynomial $p(x)=x$. We have $a(x)p'(x) = \sigma^2 x = \sigma^2 p(x)$, so that $h(x)\equiv \sigma^2$, and thus
\[
2\,\Gcal p(x) - h(x)p'(x) = 2(b+\beta x) - \sigma^2.
\]
It is well known that $X_t>0$ for all $t>0$ if and only if the Feller condition $2b\ge\sigma^2$ holds. Theorem~\ref{T:boundary}\ref{T:boundary:3} gives the necessity of the Feller condition. Theorem~\ref{T:boundary}\ref{T:boundary:1} and \ref{T:boundary:2} together give the sufficiency of the Feller condition. Indeed, suppose that $2b>\sigma^2$ then Theorem~\ref{T:boundary}\ref{T:boundary:1}  applies, while the condition in Theorem~\ref{T:boundary}\ref{T:boundary:2} is not satisfied. Theorem~\ref{T:boundary}\ref{T:boundary:2} in turn applies when $2b=\sigma^2$, while Theorem~\ref{T:boundary}\ref{T:boundary:1} does not.

\section{Examples of semialgebraic state spaces} \label{S:examples}

We now discuss examples of semialgebraic state spaces of interest, where our results are applicable.

\subsection{Some quadric sets}\label{ssqs}

Let $Q\in\S^d$ be nonsingular, and consider the state space $E=\{x\in\R^d:x^\top Qx\le 1\}$. Here $\Pcal$ consists of the single polynomial $p(x)=1-x^\top Qx$, and $M=\R^d$. After a linear change of coordinates we may assume $Q$ is diagonal with $Q_{ii}\in\{+1,-1\}$. We also suppose $Q_{ii}=1$ for at least some $i$, since otherwise $E=\R^d$. State spaces of this type include the closed unit ball, but also non-convex sets like $\{x\in\R^2:x_1^2-x_2^2\le1\}$, whose boundary is a hyperbola. One can also consider complements of such sets; see Remark~\ref{R:compl} below. One interesting aspect of the state spaces investigated here is that they do not admit non-deterministic affine diffusions; this follows directly from Proposition~\ref{P:quad} below, which shows that $a$ is either quadratic or identically zero. This is in contrast to the parabolic state spaces considered by \citet{Spreij/Veerman:2012}.

The following convex cone of polynomial maps plays a key role. Recall that a polynomial $r\in\Pol(\R^d)$ is called {\em homogeneous of degree $k$} if $r(sx)=s^kr(x)$ for all $x\in\R^d$ and~$s>0$.
\[
\Ccal^Q_+ = \left\{c:\R^d\to\S^d_+ :
\begin{array}{l} c_{ij}\in\Pol_2(\R^d) \text{ is homogeneous of degree 2 for all }i,j\\[1mm]
			\text{and } c(x)Qx=0\text{ for all }x\in\R^d
\end{array}\right\}.
\]
Note that the condition $c(x)Qx=0$ is equivalent to $c(x)\nabla p(x)=0$, meaning that all eigenvectors of $c(x)$ with nonzero eigenvalues are orthogonal to $\nabla p(x)$. The proof of the following proposition is given in Section~\ref{secPquad}.

\begin{proposition} \label{P:quad}
Conditions~\ref{G1}--\ref{G2} hold for the state space~$E$. Moreover, the operator~$\Gcal$ satisfies \ref{T:existence:psd}--\ref{T:existence:q} if and only if
\begin{align}
a(x) &= (1-x^\top Qx)\alpha + c(x)  \label{eq:quad:a} \\
b(x) &= \beta + Bx \label{eq:quad:b}
\end{align}
for some $\alpha\in\S^d_+$, $\beta\in\R^d$, $B\in\R^{d\times d}$ and $c\in\Ccal^Q_+$ such that
\begin{equation} \label{eq:quad}
\beta^\top Qx + x^\top B^\top Qx + \frac{1}{2}\tr(c(x)Q)  <  0 \quad\text{for all}\quad x\in\{p=0\}.
\end{equation}
\end{proposition}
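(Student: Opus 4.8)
The plan is to verify conditions \ref{G1}--\ref{G2} first, then translate \ref{T:existence:psd}--\ref{T:existence:q} into the explicit form \eqref{eq:quad:a}--\eqref{eq:quad}. Since $M=\R^d$ and $\Qcal=\emptyset$, condition \ref{G1} is vacuous. For \ref{G2} we must show $(p)=\Ical(\{p=0\})$ where $p(x)=1-x^\top Qx$. I would apply Lemma~\ref{L:irred}: one checks that $p$ is irreducible (its quadratic part $-x^\top Qx$ is a nondegenerate quadratic form that is not a perfect square, since $Q$ has at least one positive eigenvalue; if $Q$ were positive definite one argues directly, otherwise the form is indefinite and irreducibility is standard), and $p$ changes sign on $\R^d$ because $p(0)=1>0$ while $p(x)<0$ for $x$ large in a direction where $Q_{ii}=1$. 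Hence Lemma~\ref{L:irred} gives $(p)=\Ical(\Vcal(p))$, which is \ref{G2}.

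Next I would analyze \ref{T:existence:psd}--\ref{T:existence:q}. Condition \ref{T:existence:q} is vacuous. Since $b$ must have affine components (Lemma~\ref{L:PPchar}, or directly from \ref{T:existence:p} once we know $\Gcal$ is polynomial), write $b(x)=\beta+Bx$; this gives \eqref{eq:quad:b} with no further constraint from the drift alone. For $a$: by \eqref{eq:abX} each $a_{ij}\in\Pol_2(\R^d)$. The condition $a\nabla p=0$ on $\{p=0\}$ reads $a(x)Qx=0$ whenever $x^\top Qx=1$. The key algebraic step is to show this forces $a(x)Qx = (1-x^\top Qx)\,(\text{something})$, and then to extract the homogeneous structure. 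Concretely, I would decompose $a(x)=a_0+a_1(x)+a_2(x)$ into its components homogeneous of degrees $0,1,2$. Evaluating $a(x)Qx$ and using that it vanishes on the quadric $\{p=0\}$, together with \ref{G2} applied entrywise (every polynomial vanishing on $\{p=0\}$ is divisible by $p$), one concludes $a(x)Qx = p(x)\,v(x)$ for a vector of polynomials $v$; matching degrees on both sides (the left side has degree $\le 3$, $p$ has degree $2$) shows $v$ is affine, and a more careful degree bookkeeping — using that $a(x)Qx - (\text{degree-}3\text{ part})$ must itself be consistent — pins down that $a(x) = p(x)\,\alpha + c(x)$ where $\alpha=a_0\in\S^d$ is constant and $c(x):=a(x)-p(x)\alpha$ is homogeneous of degree $2$ (the degree-$1$ component must vanish, since $p(x)\alpha$ contributes degrees $0$ and $2$ only and $a$ has no constraint producing an odd part compatible with $a(x)Qx\in p\cdot\Pol_1$). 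Then $a(x)Qx=0$ for all $x$ forces $c(x)Qx=0$ identically, i.e. $c(x)\nabla p(x)=0$.

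Then I would handle positive semidefiniteness: $a\in\S^d_+$ on $E=\{p\ge0\}$. On the boundary $\{p=0\}$ this gives $c(x)\in\S^d_+$; by homogeneity $c(sx)=s^2 c(x)$, and every $x\ne 0$ lies on a ray through some boundary point up to scaling (since $Q$ is nonsingular, for $x$ with $x^\top Qx>0$ one has $x/\sqrt{x^\top Qx}\in\{p=0\}$, and for $x^\top Qx\le 0$ one uses continuity/limiting and the fact that $c(x)Qx=0$ constrains $c(x)$ to act only on the hyperplane $\{Qx\}^\perp$ where positivity propagates) — so $c\ge0$ everywhere, i.e. $c\in\Ccal^Q_+$; and then on the interior, $a(x)=p(x)\alpha+c(x)\ge0$ with $p(x)>0$ and $c(x)\ge0$ forces $\alpha\in\S^d_+$ by letting $x\to0$. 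Conversely, any such $\alpha,c$ give $a\in\S^d_+$ on $E$. Finally, the strict inequality $\Gcal p>0$ on $\{p=0\}$: compute $\Gcal p = \frac12\tr(a\nabla^2 p) + b^\top\nabla p$ with $\nabla p(x)=-2Qx$, $\nabla^2 p=-2Q$. On $\{p=0\}$, $\frac12\tr(a(x)\nabla^2p) = -\tr(a(x)Q) = -\tr(c(x)Q)$ (the $p(x)\alpha$ term vanishes), and $b(x)^\top\nabla p(x) = -2(\beta+Bx)^\top Qx$. Hence $\Gcal p(x) = -2\beta^\top Qx - 2x^\top B^\top Qx - \tr(c(x)Q)$, and $\Gcal p>0$ on $\{p=0\}$ is exactly $\beta^\top Qx + x^\top B^\top Qx + \frac12\tr(c(x)Q)<0$ there, which is \eqref{eq:quad}.

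The main obstacle I expect is the degree-bookkeeping argument that upgrades ``$a(x)Qx$ vanishes on $\{p=0\}$'' to the clean splitting $a(x)=p(x)\alpha+c(x)$ with $\alpha$ constant and $c$ homogeneous of degree $2$ — in particular ruling out an odd-degree contribution and correctly identifying that $\alpha$ must be the constant term of $a$. This requires care in separating $a(x)Qx=p(x)v(x)$ by homogeneous components and using nonsingularity of $Q$ to recover $a$ from $aQx$ up to the kernel constraint; the positivity propagation off the boundary (second step above) is the secondary subtlety.
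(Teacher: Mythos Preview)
Your overall strategy matches the paper's: verify \ref{G2} via Lemma~\ref{L:irred}, split $a$ by homogeneous degree, use $a\nabla p=0$ on $\{p=0\}$ together with \ref{G2} to obtain $a(x)Qx=p(x)v(x)$, then extract the decomposition $a=p\,\alpha+c$ and read off \eqref{eq:quad} from $\Gcal p>0$. Two points need attention.

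First, your irreducibility argument for $p$ fails when $d=1$, since then $p(x)=1-x^2=(1-x)(1+x)$. The paper treats $d=1$ separately (any polynomial vanishing on $\{-1,1\}$ is divisible by $1-x^2$) and invokes Lemma~\ref{L:irred} only for $d\ge2$.

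Second, and more substantially: your degree bookkeeping on $a(x)Qx=p(x)v(x)$ correctly yields $v(0)=0$, $v(x)=\alpha Qx$ with $\alpha=a(0)$, and, at degree two, $a_1(x)Qx=0$ for all $x$, where $a_1$ is the linear part of $a$. But this last identity does \emph{not} by itself give $a_1=0$; a linear map $L:\R^d\to\S^d$ with $L(x)Qx\equiv0$ is not obviously zero, and without $a_1=0$ your $c(x)=a(x)-p(x)\alpha$ fails to be homogeneous of degree two, so $c\notin\Ccal^Q_+$. Your parenthetical (``$a$ has no constraint producing an odd part compatible with $a(x)Qx\in p\cdot\Pol_1$'') is exactly the statement $a_1(x)Qx=0$, not $a_1=0$. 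The paper fills this gap with a dimension count: the linear map $T$ sending $L\in\{\text{linear maps }\R^d\to\S^d\}$ to $(x\mapsto L(x)Qx)\in\{\text{degree-2 homogeneous maps }\R^d\to\R^d\}$ goes between spaces of equal dimension $d^2(d+1)/2$, and an explicit construction shows $T$ is surjective, hence injective. You should supply either this argument or a direct coefficient computation.
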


\begin{remark}
If $c(x)$ satisfies the linear growth condition $\|c(x)\| \le C(1+\|x\|)$ for all $x\in E$, then $a(x)$ satisfies~\eqref{eq:alinNEW} and uniqueness in law for $E$-valued solutions to~\eqref{eq:SDE} holds by Theorem~\ref{TNEW:unique}. In particular, this holds if $Q$ is positive definite, i.e.~$Q=\Id$, so that $E$ is the unit ball and hence compact.
\end{remark}

\begin{remark} \label{R:compl}
The conditions of Proposition~\ref{P:quad} can easily be modified to cover state spaces of the form $E=\{x\in\R^d:x^\top Qx\ge 1\}$. This amounts to replacing $p$ by $-p$ above, and includes, for example, the complement of the open unit ball. With this modification, Proposition~\ref{P:quad} is still true as stated, except that $-\alpha$ should lie in $\S^d_+$, and the inequality in~\eqref{eq:quad} should be reversed.
\end{remark}

A question that is not addressed by Proposition~\ref{P:quad} is how to describe the set $\Ccal^Q_+$ in more explicit terms. We now provide a class of maps $c\in\Ccal^Q_+$, which yields a large family of polynomial   diffusions on~$E$ that we expect to be useful in applications.

Let $S_k$, $k=1,\ldots,d(d-1)/2$ be a basis for the linear space of skew-symmetric $d\times d$ matrices. Using the skew-symmetry of the $S_k$ together with the fact that $Q^2=\Id$ it is easy to check that any map $c$ of the form
\begin{equation} \label{eq:quad3}
c(x) =  \sum_{k, l=1}^{d(d-1)/2} \gamma_{kl} QS_k xx^\top S_l^\top Q,
\end{equation}
where $\Gamma=(\gamma_{kl})\in\S_+^{d(d-1)/2}$, lies in $\Ccal^Q_+$. For any $c(x)$ of the form~\eqref{eq:quad3}, condition~\eqref{eq:quad} then becomes
\[
\beta^\top Qx + x^\top \left( B^\top Q + \textstyle{\sum_{k,l}} \gamma_{kl} S_k^\top Q S_l \right) x < 0 \quad\text{for all}\quad x\in\{p=0\}.
\]

\subsection{The product space $[0,1]^m\times\R^n_+$}\label{ss01}

Consider the state space $E=[0,1]^m\times\R^n_+$. Here $d=m+n$, and the generating family of polynomials can be taken to be $\Pcal=\{x_i:i=1,\ldots,m+n;\ 1-x_i: i=1,\ldots,m\}$. To simplify notation, introduce index sets $I=\{1,\ldots,m\}$ and $J=\{m+1,\ldots,m+n\}$, and write $x_I$ (resp.~$x_J$) for the subvector of $x\in\R^d$ consisting of the components with indices in $I$ (resp.~$J$). Similarly, for a matrix $A\in\R^{d\times d}$ we write $A_{II}$, $A_{IJ}$, etc.~for the submatrices with indicated row- and column indices. The proof of the following proposition is given in Section~\ref{secP01mn}.

\begin{proposition} \label{P:01mn}
Conditions \ref{G1}--\ref{G2} hold for the state space~$E$. Moreover, the operator~$\Gcal$ satisfies \ref{T:existence:psd}--\ref{T:existence:q} if and only if
\begin{enumerate}
\item\label{P:01mn:a} The matrix $a$ is given by
\begin{align*}
a_{ii}(x)	&= \gamma_i x_i(1-x_i) 			&& (i\in I)\\
a_{ij}(x)	&=0 							&& (i\in I,\ j\in I\cup J,\ i\ne j)\\
a_{jj}(x)	&= \alpha_{jj}x_j^2 + x_j \left(\phi_j + \psi_{(j)}^\top x_I + \pi_{(j)}^\top x_J\right)
											&& (j\in J)\\
a_{ij}(x)	&= \alpha_{ij}x_ix_j 				&& (i,j\in J,\ i\ne j)
\end{align*}
for some $\gamma\in\R^m_+$, $\psi_{(j)}\in\R^m$, $\pi_{(j)}\in\R^n_+$ with $\pi_{(j),j}=0$, $\phi\in\R^n$ with $\phi_j\ge (\psi_{(j)}^-)^\top\oo$, and $\alpha=(\alpha_{ij})_{i,j\in J}\in\S^n$ such that $\alpha+\Diag(\Pi^\top x_J)\Diag(x_J)^{-1}\in\S^n_+$ for all $x_J\in\R^n_{++}$, where $\Pi\in\R^{n\times n}$ is the matrix with columns $\pi_{(j)}$.

\item\label{P:01mn:b} The vector $b$ is given by
\begin{equation} \label{eq:P_01mn:b}
b(x) = \left(
\begin{array}{lllll}
\beta_I	&+& B_{II} x_I				\\
\beta_J	&+& B_{JI}x_I	&+& B_{JJ}x_J	\\
\end{array}
\right)
\end{equation}
for some $\beta\in\R^d$ and $B\in\R^{d\times d}$ such that $(B^-_{i,I\setminus\{i\}})\oo<\beta_i< - B_{ii}-(B^+_{i,I\setminus\{i\}})\oo$ for all $i\in I$, $\beta_j> (B^-_{jI})\oo$ for all $j\in J$, and $B_{JJ}\in\R^{m\times m}$ has positive off-diagonal entries.
\end{enumerate}
\end{proposition}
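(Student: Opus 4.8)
The plan is to verify conditions \ref{G1}--\ref{G2} first, then characterize the operators $\Gcal$ satisfying \ref{T:existence:psd}--\ref{T:existence:q} by translating each algebraic condition into explicit constraints on the coefficients of $a$ and $b$. Since $M=\R^d$ and each $p\in\Pcal$ (namely $x_i$ for $i\in I\cup J$ and $1-x_i$ for $i\in I$) is a linear, hence irreducible, polynomial that changes sign on $\R^d$, condition \ref{G2} follows immediately from Lemma~\ref{L:irred}, and \ref{G1} is vacuous since $\Qcal=\emptyset$.

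For the characterization, I would proceed component by component through the conditions of Theorem~\ref{T:existence}. First, \ref{T:existence:q} is vacuous. The condition $a\nabla p=0$ on $\{p=0\}$ applied to $p(x)=x_i$ says that on the hyperplane $\{x_i=0\}$, the $i$-th column of $a$ vanishes; applied to $p(x)=1-x_i$ for $i\in I$, it says the $i$-th column of $a$ vanishes on $\{x_i=1\}$. Since $a_{ij}\in\Pol_2(\R^d)$, forcing $a_{ij}(x)=0$ whenever $x_i=0$ (for $j\ne i$, $i\in I\cup J$) forces $a_{ij}$ to be divisible by $x_i$; combined with divisibility by $x_j$ (from the condition for index $j$) and, for $i\in I$, by $(1-x_i)$, a degree count pins down $a_{ij}$ up to a scalar: $a_{ij}(x)=\gamma_i x_i(1-x_i)$ on the diagonal for $i\in I$, $a_{ij}=0$ for $i\in I$ off-diagonal, $a_{ij}=\alpha_{ij}x_ix_j$ for $i,j\in J$ distinct, and $a_{jj}$ divisible by $x_j$ for $j\in J$, giving the displayed form with the linear factor $\phi_j+\psi_{(j)}^\top x_I+\pi_{(j)}^\top x_J$. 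The nonnegativity sign constraints ($\gamma\in\R^m_+$, $\pi_{(j)}\in\R^n_+$ with $\pi_{(j),j}=0$, $\phi_j\ge(\psi_{(j)}^-)^\top\oo$, and the Schur-type condition $\alpha+\Diag(\Pi^\top x_J)\Diag(x_J)^{-1}\in\S^n_+$) come from imposing \ref{T:existence:psd}, i.e.\ $a(x)\in\S^d_+$ for all $x\in E$: one evaluates $a(x)$ at suitable boundary and interior points of $[0,1]^m\times\R^n_+$, in particular letting some coordinates tend to $0$ or $1$ to isolate each entry's sign, and factoring out $\Diag(x_J)$ from the $J\times J$ block to get the stated positive-semidefiniteness condition. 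Next, $\Gcal p>0$ on $E\cap\{p=0\}$: for $p(x)=x_j$, $j\in J$, since $a$'s $j$-th column vanishes there, $\Gcal x_j=b_j(x)$, so one needs $b_j>0$ on $E\cap\{x_j=0\}$; for $p(x)=x_i$ resp.\ $1-x_i$ with $i\in I$, similarly $\Gcal x_i=b_i>0$ on $\{x_i=0\}$ and $\Gcal(1-x_i)=-b_i>0$ on $\{x_i=1\}$. Writing $b(x)=\beta+Bx$ affine and substituting the extreme coordinate values of the other variables yields exactly the stated inequalities on $\beta$ and the entries of $B$, including that $b$ cannot depend on $x_J$ in the $I$-block (otherwise $b_i$ would be unbounded on $E\cap\{x_i=0\}$, which is noncompact in the $J$-directions, violating the strict sign), hence $B_{IJ}=0$; and that $B_{JJ}$ has positive off-diagonal entries comes from the requirement $b_j>0$ when $x_j=0$ and the other $x_{j'}$, $j'\in J$, are large.

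The main obstacle I anticipate is the positive-semidefiniteness bookkeeping for $a$ on the noncompact part: establishing that $a(x)\in\S^d_+$ for all $x\in[0,1]^m\times\R^n_+$ is \emph{equivalent} to the stated scalar and matrix inequalities requires care in choosing the right limiting points and in handling the interaction between the $J\times J$ block (which is genuinely quadratic and needs the Schur/diagonal-scaling argument) and the requirement that the linear factors $\phi_j+\psi_{(j)}^\top x_I+\pi_{(j)}^\top x_J$ stay nonnegative on $E$ — the latter forces $\pi_{(j)}\ge0$ and the bound $\phi_j\ge(\psi_{(j)}^-)^\top\oo$ by minimizing over $x_I\in[0,1]^m$ and $x_J\in\R^n_+$. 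The other direction (sufficiency) is then a direct verification that the displayed $a$ and $b$ satisfy \ref{T:existence:psd}--\ref{T:existence:q}, after which Theorem~\ref{T:existence} gives existence and the zero-time property, and the conclusion that $\Gcal$ is polynomial on $E$. As in the quadric case, the argument is bookkeeping-heavy but conceptually routine once the reduction to boundary/interior evaluations is set up; I would relegate the detailed computations to Section~\ref{secP01mn} as the authors do.
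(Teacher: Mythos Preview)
Your proposal is correct and follows essentially the same approach as the paper: verify \ref{G1}--\ref{G2} (the paper just says \ref{G2} is ``not hard to check'' while you invoke Lemma~\ref{L:irred} explicitly), then derive the form of $a$ from the divisibility constraints forced by $a\nabla p=0$ on $\{p=0\}$, extract the sign conditions from \ref{T:existence:psd} by evaluating at boundary points and taking the scaling limit in the $J$-block (the paper writes this as $s^{-2}a_{JJ}(x_I,sx_J)$ with $s\to\infty$, which is exactly your ``factor out $\Diag(x_J)$'' step), and finally read off the drift constraints from $\Gcal p>0$ on $E\cap\{p=0\}$. The only point where your sketch is slightly more compressed than the paper is the argument that $B_{IJ}=0$: you appeal to unboundedness violating ``the strict sign'', which is fine, but note that you need \emph{both} sign conditions (at $x_i=0$ and $x_i=1$) to rule out both signs of a nonzero $B_{ij}$.
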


\begin{remark}
We get uniqueness in the following two cases. First, if $\alpha=0$ and $\pi_{(j)}=0$ for all~$j$, then the linear growth condition~\eqref{eq:alinNEW} is satisfied and uniqueness follows by Theorem~\ref{TNEW:unique}. Second, if $\psi_{(j)}=0$ and $\pi_{(j)}=0$ for all $j$ and $\phi=0$, then the submatrix $a_{JJ}(x)$ only depends on $x_J$ and can be written $a_{JJ}=\sigma_{JJ}\sigma_{JJ}$, where $\sigma_{JJ}(x_J)=\Diag(x_J)\alpha^{1/2}$ is Lipschitz continuous. Since also $X_I$ is an autonomous $m$-dimensional diffusion on $[0,1]^m$, uniqueness follows from Theorem~\ref{T:unique2} in conjunction with Theorem~\ref{TNEW:unique}. Note that $X_I$ and $X_J$ are coupled only through the drift in this case.
\end{remark}

A natural next step is to consider the state space $[0,1]^m\times\R^n_+\times\R^l$, $d=m+n+l$. In this case one readily continues the above argument to deduce that the diffusion matrix is of the form
\[
a(x) = \left(
\begin{array}{lll}
a_{II}(x_I)		& 0					& a_{IK}(x_I)	\\
0			& a_{JJ}(x_I,x_J)		& a_{JK}(x_I,x_J)	\\
a_{IK}(x_I)^\top	& a_{JK}(x_I,x_J)^\top	& a_{KK}(x_I,x_J,x_K)
\end{array}
\right),
\]
where $K=\{m+n+1,\ldots,d\}$, $a_{II}$ and $a_{JJ}$ are given by Proposition~\ref{P:01mn}{\ref{P:01mn:a}}, $a_{IK}(x_I)=\Diag(x_I)(\Id - \Diag(x_I))\Rho$ for some $\Rho\in\R^{m\times l}$, $a_{JK}(x_I,x_J)= \Diag(x_J)\Eta(x_I,x_J)$ for some matrix $\Eta$ of polynomials in $\Pol_1(E)$, and $a_{KK}$ has component functions in $\Pol_2(E)$. Regarding the drift vector $b=(b_I,b_J,b_K)$, the last part $b_K$ is unrestricted within the class of affine functions of $x$, whereas $(b_I,b_J)$ must satisfy Proposition~\ref{P:01mn}{\ref{P:01mn:b}}. With this structure, we have \ref{T:existence:psd}--\ref{T:existence:q} if and only if $a\in\S^d_+$ on $E$. This of course imposes additional restrictions on $\Rho$, $\Eta$, and $a_{KK}$. Stating these restrictions explicitly is cumbersome, and we refrain from doing so here.

\subsection{The unit simplex}\label{ssusx}

Let $d\ge2$ and consider the unit simplex $E=\{x\in\R^d_+: x_1+\cdots+x_d=1\}$. Here $\Pcal=\{x_i:i=1,\ldots,d\}$ consists of the coordinate functions and $\Qcal$ consists of the single polynomial $1-\oo^\top x$. The proof of the following proposition is given in Section~\ref{secPsimplex}.

\begin{proposition} \label{P:simplex}
Conditions \ref{G1}--\ref{G2} hold for the state space~$E$. Moreover, the operator~$\Gcal$ satisfies \ref{T:existence:psd}--\ref{T:existence:q} if and only if
\begin{enumerate}
\item\label{P:simplex:a} The matrix $a$ is given by
\begin{align*}
a_{ii}(x)		&= \sum_{j\ne i}\alpha_{ij}x_ix_j 	&& \\
a_{ij}(x)		&= -\alpha_{ij}x_ix_j 				&& (i\ne j)
\end{align*}
on $E$ for some $\alpha_{ij}\in\R_+$ such that $\alpha_{ij}=\alpha_{ji}$ for all $i,j$.
\item\label{P:simplex:b} The vector $b$ is given by
\[
b(x)=\beta+Bx,
\]
where $\beta\in\R^d$ and $B\in\R^{d\times d}$ satisfy $B^\top\oo + (\beta^\top\oo)\oo = 0$ and $\beta_i+B_{ji} > 0$ for all $i$ and all $j\ne i$.
\end{enumerate}
\end{proposition}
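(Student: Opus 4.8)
The plan is to establish the proposition's two assertions in turn: that \ref{G1}--\ref{G2} hold for the simplex $E$, and then the if-and-only-if characterization of \ref{T:existence:psd}--\ref{T:existence:q}. Throughout I would write $q=1-\oo^\top x$ (so $\Qcal=\{q\}$) and exploit that $\nabla x_i=e_i$, $\nabla q=-\oo$, and $\nabla^2$ of these affine functions vanishes, so that $\Gcal x_i=b_i$ and $\Gcal q=-\oo^\top b$. This immediately reduces \ref{T:existence:q} to ``$a\oo=0$ and $\oo^\top b=0$ on $M$'', reduces \ref{T:existence:p} to ``for each $i$ the column $ae_i$ vanishes on $M\cap\{x_i=0\}$, and $b_i>0$ on $E\cap\{x_i=0\}$'', and leaves \ref{T:existence:psd} as ``$a\in\S^d_+$ on $E$''.

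Condition \ref{G1} is immediate, $\Qcal$ being a singleton with nonvanishing gradient. For \ref{G2} I would apply Lemma~\ref{L:prime} to each ideal $(\Qcal\cup\{x_i\})=(1-\oo^\top x,\,x_i)$: eliminating $x_i$ and one further coordinate through $1-\oo^\top x=0$ identifies the quotient ring with a polynomial ring in $d-2$ variables, so the ideal is prime of dimension $d-2=d-1-|\Qcal|$; and at the point $e_j\in M\cap\{x_i=0\}$, for any $j\ne i$ (available since $d\ge2$), the gradients $-\oo$ and $e_i$ are linearly independent. Lemma~\ref{L:prime} then gives $(1-\oo^\top x,x_i)=\Ical(M\cap\{x_i=0\})$.

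The substantive part of the characterization is the form of $a$. For $i\ne j$, the reduced form of \ref{T:existence:p} (applied with $p=x_i$ and with $p=x_j$) forces $a_{ij}$ to vanish on $M\cap\{x_i=0\}$ and on $M\cap\{x_j=0\}$. Passing to the affine chart on $M$ in which $u_k=x_k$ for $k<d$ and $x_d=1-\sum_{k<d}u_k$, the image $\tilde a_{ij}$ is a polynomial of degree $\le2$ vanishing on two distinct hyperplanes; a degree-$\le 2$ polynomial vanishing on $\{u_i=0\}$ is $u_i$ times an affine form, and vanishing in addition on the second hyperplane forces that affine form to be a scalar multiple of the second hyperplane's defining linear form. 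This yields $a_{ij}=-\alpha_{ij}x_ix_j$ on $M$, hence on $E$, for a scalar $\alpha_{ij}$, with $\alpha_{ij}=\alpha_{ji}$ by symmetry of $a$; and then $a\oo=0$ on $M$ forces $a_{ii}=-\sum_{j\ne i}a_{ij}=\sum_{j\ne i}\alpha_{ij}x_ix_j$ on $M$. I would then record the identity $\xi^\top a(x)\xi=\sum_{i<j}\alpha_{ij}x_ix_j(\xi_i-\xi_j)^2$, from which \ref{T:existence:psd} holds if and only if every $\alpha_{ij}\ge0$: sufficiency because $x_ix_j\ge0$ on $E$, necessity by testing $x=\tfrac12(e_i+e_j)$ against $\xi=e_i$. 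For the drift, writing $b(x)=\beta+Bx$, the condition $\oo^\top b=0$ on the hyperplane $M$ is equivalent to the affine form $\oo^\top\beta+\oo^\top Bx$ being a multiple of $1-\oo^\top x$, i.e.\ to $B^\top\oo+(\beta^\top\oo)\oo=0$; and since $b_i$ is affine, $b_i>0$ on the sub-simplex $E\cap\{x_i=0\}$ is equivalent to positivity of $b_i$ at its vertices $e_j$, $j\ne i$, which gives the remaining inequality.

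For the ``if'' direction each of these steps reverses. The one subtlety I would flag is that the stated formulas for $a$, although asserted only on $E$, automatically hold on all of $M$ because $E$ is Zariski-dense in the affine subspace $M$; this is precisely what is needed to recover the $M$-statements in \ref{T:existence:p}--\ref{T:existence:q}. I expect the main obstacle to be the middle step --- cleanly deducing $a_{ij}=-\alpha_{ij}x_ix_j$ from the vanishing of $a_{ij}$ on the two faces of $M$ --- where \ref{G2} and the degree bookkeeping in the chart are doing the work; the drift analysis and the verification of the ``if'' direction are then routine.
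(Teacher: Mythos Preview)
Your proposal is correct and follows the same overall architecture as the paper---verify \ref{G1}--\ref{G2} via Lemma~\ref{L:prime}, then treat the diffusion matrix and drift separately in each direction---but several of your technical arguments differ from the paper's and are in fact cleaner. For the off-diagonal entries, the paper works in $\R^d$ and invokes \ref{G2} twice (writing $a_{ji}=x_ih_{ji}+(1-\oo^\top x)g_{ji}$, then using symmetry and \ref{G2} again to factor $h_{ij}$), whereas you pass to an affine chart on $M$ and argue directly that a degree-$\le2$ polynomial vanishing on two distinct hyperplanes is a scalar multiple of their product. For the diagonal entries, you read off $a_{ii}=-\sum_{j\ne i}a_{ij}$ immediately from $a\oo=0$ on $M$; the paper instead writes $a_{ii}$ with undetermined coefficients and pins them down via a scaling argument $x\mapsto sx$, $s\to0$. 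The most substantive difference is your identity $\xi^\top a(x)\xi=\sum_{i<j}\alpha_{ij}x_ix_j(\xi_i-\xi_j)^2$, which settles both directions of the equivalence between \ref{T:existence:psd} and $\alpha_{ij}\ge0$ at once; the paper proves sufficiency by a strict diagonal dominance argument on $A(s)=(1-s)(\Lambda+\Id)+sa(x)$ and necessity by evaluating $a_{ii}$ at a one-parameter family of points and sending the parameter to~$1$. Your route is shorter and more transparent; the paper's has the minor advantage of not requiring the quadratic-form identity to be spotted. Your remark about Zariski density of $E$ in $M$ (to lift the ``on $E$'' formulas to the $M$-statements in \ref{T:existence:p}--\ref{T:existence:q}) is a point the paper leaves implicit.
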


\begin{remark}
Since $E$ is compact, Theorem~\ref{TNEW:unique} yields uniqueness in law for $E$-valued solutions to~\eqref{eq:SDE}.
\end{remark}

\begin{remark}
In the special case where $\alpha_{ij}=\sigma^2$ for some $\sigma>0$ and all $i,j$, the diffusion matrix takes the form
\begin{align*}
a_{ii}(x)		&= \sigma^2x_i(1-x_i) 	&& \\
a_{ij}(x)		&= -\sigma^2x_ix_j		&& (i\ne j).
\end{align*}
The resulting process is sometimes called a {\em multivariate Jacobi} process; see, for instance, \cite{Gourieroux/Jasiak:2006}.
\end{remark}

\begin{remark}
Alternatively, one can establish Proposition~\ref{P:simplex} by considering polynomial   diffusions $Y$ on the ``solid'' simplex $\{y\in\R^{d-1}_+:y_1+\cdots+y_{d-1}\le 1\}$, and then set $X=(X_1,\ldots,X_d)=(Y,1-Y_1-\ldots-Y_{d-1})$. In this case $\Qcal=\emptyset$, and it would be enough to invoke Lemma~\ref{L:irred} rather than Lemma~\ref{L:prime}.
\end{remark}

\section{Polynomial diffusion models in finance} \label{S:applications}

We now elaborate on various polynomial diffusion models in finance, following up on the introduction about \eqref{eq:PitTNEW}. Let the state price density $\zeta$ be a positive semimartingale on a filtered probability space $(\Omega,\Fcal,\Fcal_t,\P)$. This induces an arbitrage-free financial market model on any finite time horizon $T^\ast$. Indeed, let $S^1,\dots,S^m$ denote the price processes of $m$ fundamental assets. According to \eqref{eq:PitTNEW} we have $\zeta_t S^i_{t} =   \E\left[ \zeta_{T^\ast} S^i_{T^\ast} \mid \Fcal_t \right]$. Assuming that $S^1$ is positive, we choose it as numeraire. This implies an equivalent measure $\Q^1\sim\P$ on $\Fcal_{T^\ast}$ by
\[ \frac{\d \Q^1}{\d \P}  =  \frac{\zeta_{T^\ast} {S^1_{T^\ast}}}{\zeta_0 S^1_{0}}.\]
Discounted price processes $\frac{S^i}{ S^1 }$ are $\Q^1$-martingales:
\[ \frac{S^i_t}{ S^1_{t} }\frac{\d \Q^1}{\d \P}|_{\Fcal_t} = \frac{S^i_t}{ S^1_{t}} \frac{\zeta_t  S^1_{t}}{\zeta_0 S^1_{0}}= \frac{\zeta_t S^i_{t}}{\zeta_0 S^1_{0}}. \]
This implies that the market $\{S^1,\dots,S^m\}$ is arbitrage-free in the sense of No Free Lunch with Vanishing Risk, see~\citet{Delbaen/Schachermayer:1994}.

Now let $X$ be a polynomial   diffusion on a state space $E\subseteq\R^d$. Fix $n\in\N$, and let $p\in\Pol_n(E)$ be a positive polynomial on $E$ with coordinate representation $\vvec p$ with respect to some basis $H(x)=(h_1(x),\ldots,h_{N}(x))^\top$ for $\Pol_n(E)$. The state price density is specified by $\zeta_t = \e^{-\alpha t}\,p(X_t)$, where $\alpha$ is a real parameter. This setup yields an arbitrage-free model for the term structure of interest rates. The time-$t$ price $P(t,T)$ of a zero coupon bond maturing at $T$, corresponding to $C_T=1$ in~\eqref{eq:PitTNEW}, can now be computed explicitly using Theorem~\ref{T:moments},
\[
P(t,T) = \e^{-\alpha (T-t)} \frac{\E[p(X_T)\mid\Fcal_t]}{p(X_t)} = \e^{-\alpha (T-t)} \frac{H(X_t)^\top \e^{(T-t)G}\,\vvec p}{H(X_t)^\top \vvec p},
\]
where $G\in\R^{N\times N}$ is the matrix representation of $\Gcal$ on $\Pol_n(E)$. The short rate is obtained via the relation $r_t=-\partial_T\log P(t,T)\mid_{T=t}$, and is given by
\[ r_t = \alpha - \frac{H(X_t)^\top G\,\vvec p }{H(X_t)^\top \vvec p}.\]
This expression clarifies the role of the parameter $\alpha$ adjusting the level of interest rates. Such models show great potential. The linear case with $p$ of the form $p(x)=\phi+\psi^\top x$ has been studied in~\cite{Filipovic/Larsson/Trolle:2014}, including an extensive empirical assessment. The parameter $\psi$ is chosen such that $E$ lies in the positive cone $\{ x\in\R^d: \psi^\top x\ge 0\}$. A specific example is $E=\R^d_+$, as discussed in Section~\ref{ss01}.

One attractive feature of the polynomial framework is that it yields efficient pricing formulae for options on coupon bearing bonds. This includes swaptions, which are among the most important interest rate options. The generic payoff of such an option at expiry date $T$ is of the form
\[
C_{T}=\left(c_0+ c_1 P(T,T_1)+\cdots + c_m P(T,T_m)\right)^+
\]
for maturity dates $T<T_1<\cdots <T_m$ and deterministic coefficients $c_0,\dots,c_m$. Formula \eqref{eq:PitTNEW} for the time $t$ price of this option boils down to computing the $\Fcal_t$-conditional expectation of
\[ \zeta_T \,C_T=\left(H(X_{T})^\top \sum_{i=0}^m c_i \e^{-\alpha T_i} \e^{(T_i-T) G}\,\vvec p\right)^+,\]
which is the positive part of a polynomial in $X_T$. Efficient methods involving the closed form $\Fcal_t$-conditional moments of $X_T$ are available, see \cite{Filipovic/Mayerhofer/Schneider:2013}.

Polynomial   diffusions can be employed in a similar way to build stochastic volatility models. We now interpret $\P$ as risk-neutral measure, and specify the spot variance (squared volatility) of an underlying stock index by $v_t=p(X_t)$. The variance swap rate for period $[t,T]$ is then given in closed form by
\[ {\rm VS}(t,T) = \frac{1}{T-t}\E\left[ \int_t^T v_s \d s\mid\Fcal_t\right] = \frac{1}{T-t} H(X_t)^\top \left( \int_t^T \e^{(s-t)G} \d s\right) \vvec p.
\]
Such models have been successfully employed in~\cite{Filipovic/Gourier/Mancini:2015} and~\cite{Ackerer/Filipovic/Pulido:2015}. Both papers consider the quadratic case, which falls into the setup of Section~\ref{ssqs}, with a quadric state space $E=\{x\in\R^d:x^\top Qx\le 1\}$ and spot variance $v_t=p(X_t)$ for a polynomial $p$ of the form $p(x)=\phi+1-x^\top Q x$ where $\phi\ge 0$ denotes the minimal spot variance. While \cite{Filipovic/Gourier/Mancini:2015} study unbounded state spaces, \cite{Ackerer/Filipovic/Pulido:2015} focus on the compact case, where $Q$ is positive definite. They derive analytic option pricing formula in terms of Hermite polynomials for European call and put options on an asset with diffusive price process $\d S_t  = S_t r \,\d t + S_t \sqrt{v_t}\,\d W^\ast_t$ where $r$ denotes the constant short rate and $W^\ast$ is a Brownian motion, which is possibly correlated with $W$ in \eqref{eq:SDE}.

An application of the unit simplex in Section~\ref{ssusx} is obtained as follows. Consider a stock index, such as the S\&P 500, whose price process is given by a semimartingale $Z$. As above we interpret $\P$ as risk-neutral measure and assume a constant short rate $r$ such that $\e^{-rt}Z_t$ is a martingale. Let $d$ be the number of constituent stocks and let $X$ be a polynomial   diffusion on $E=\{x\in\R^d_+: x_1+\cdots+x_d=1\}$ which is independent of $Z$. We fix a finite time horizon $T^\ast$ and define the $E$-valued martingale, for $t\le T^\ast$,
\[
Y_t = \E[X_{T^\ast}\mid\Fcal_t].
\]
Since $X$ is polynomial, $Y_t$ is a first degree polynomial in $X_t$ whose coefficients can be determined by an application of Theorem~\ref{T:moments}. Specifically, with $\beta$ and $B$ being the drift parameters of $X$ as given in Proposition~\ref{P:simplex}, one finds
\[ \text{$Y_t = \Phi(T^\ast-t) + \Psi(T^\ast-t) X_t $ with $\Phi(\tau)=\int_0^{\tau} \e^{s B}\beta\,\d s$ and $\Psi(\tau)=\e^{\tau B }$.}\]
We now define the constituent stocks' price processes $S^i_t = Y^ i_t Z_t$, $i=1,\dots,d$, such that $S^1+\cdots+S^d=Z$. Assume that the price of the European call option on the index with maturity $T$ and strike $K$ is given in closed form, $C(T,K)$, for some analytic function $C$. The price of the call option on stock $i$ with maturity $T$ and strike $K$ is then given by
\[ C_i(T,K)= \E\left[ Y^i_T \,C(T,K/Y^i_T)\right] .\]
This price can be efficiently computed in three steps. First, compute $\xi \,C(T,K/\xi)$ for a finite set of grid points $\xi\in [0,1]$. Second, apply some polynomial interpolation scheme, for example using Chebyshev polynomials, to obtain a polynomial approximation of degree $n$, say $q(T,K,\xi)$, of $\xi\,C(T,K/\xi)$ in $\xi\in [0,1]$. Third, approximate the option price $C_i(T,K)$ by $H(X_0)^\top \e^{T \,G} \vvec p_i(T,K)$ where $\vvec p_i(T,K)$ is the coordinate representation of the polynomial $p(x)=q\left(T,K,\Phi_i(T^\ast-T) +( \Psi(T^\ast-T) x)_i\right)$ in $x$ with respect to some appropriately chosen basis of polynomials for ${\rm Pol}_n(E)$. Extensions to basket and spread options on the stocks $S^1,\dots,S^d$ are straightforward. This is work in progress.

An application of polynomial   diffusions on a compact state space to credit risk is given in \cite{Ackerer/Filipovic:2015}.

\appendix\normalsize

\section{Nonnegative It\^o processes}\label{appL:nonneg}

The following auxiliary result forms the basis of the proof of Theorem~\ref{T:existence}. It gives necessary and sufficient conditions for nonnegativity of certain It\^o processes.

\begin{lemma} \label{L:nonneg}
Let $Z$ be a continuous semimartingale of the form $Z_t=Z_0+\int_0^t\mu_s\d s+\int_0^t\nu_s\d B_s$, where $Z_0\ge 0$, $\mu$ and $\nu$ are continuous processes, and $B$ is Brownian motion. Let $L^0$ be the local time of $Z$ at level zero.
\begin{enumerate}
\item\label{L:nonneg:2} If $\mu>0$ on $\{Z=0\}$ and $L^0=0$, then $Z\ge 0$ and $\int_0^t \1{Z_s=0}\d s=0$.
\item\label{L:nonneg:1} If $Z\ge 0$, then on $\{Z=0\}$ we have $\mu\ge 0$ and $\nu=0$.
\end{enumerate}
\end{lemma}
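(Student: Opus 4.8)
My plan is to combine Tanaka's formula applied to $Z^-$, the occupation times formula, and a couple of sample-path facts about continuous local martingales near time $0$.

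For the first assertion (the invariance statement) I would begin by showing $Z\ge0$ by contradiction. Suppose the event $A=\{\inf_t Z_t<0\}$ had positive probability, and on $A$ set $\sigma=\inf\{t:Z_t<0\}$. Since $Z_0\ge0$ and $Z$ is continuous, $Z_\sigma=0$, so $\mu_\sigma>0$ by hypothesis; by continuity of $\mu$ the stopping time $\rho=\inf\{t>\sigma:\mu_t\le0\}$ then satisfies $\rho>\sigma$, with $\mu>0$ on $[\sigma,\rho)$. Tanaka's formula for $Z^-$, evaluated between $\sigma$ and $t$ and using $Z^-_\sigma=0$ together with $L^0\equiv0$, reads
\[
Z^-_t=-\int_\sigma^t\mathbf 1_{\{Z_s\le0\}}\mu_s\,\d s-\int_\sigma^t\mathbf 1_{\{Z_s\le0\}}\nu_s\,\d B_s,\qquad t\ge\sigma.
\]
On $[\sigma,\rho)$ the first integral is nonnegative, so $Z^-_t\le -N_t$ there, where $N_t=\int_\sigma^t\mathbf 1_{\{Z_s\le0\}}\nu_s\,\d B_s$ is a continuous local martingale null at $\sigma$; stopping at $\rho$, the process $-N_{\cdot\wedge\rho}$ is a nonnegative continuous local martingale issued from zero, hence identically zero. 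Thus $Z^-\equiv0$ on $[\sigma,\rho]$, which contradicts the definition of $\sigma$ since $\rho>\sigma$; hence $Z\ge0$. Once $Z\ge0$ is known, $\{Z_s\le0\}=\{Z_s=0\}$, and the occupation times formula gives $\int_0^t\mathbf 1_{\{Z_s=0\}}\nu_s^2\,\d s=\int_0^t\mathbf 1_{\{Z_s=0\}}\,\d\langle Z\rangle_s=0$, so $\int_0^\cdot\mathbf 1_{\{Z_s=0\}}\nu_s\,\d B_s\equiv0$; then Tanaka's formula together with $L^0\equiv0$ yields $\int_0^t\mathbf 1_{\{Z_s=0\}}\mu_s\,\d s=0$ for all $t$, and since the integrand is nonnegative (because $\mu>0$ on $\{Z=0\}$) and strictly positive wherever $Z_s=0$, this forces $\int_0^t\mathbf 1_{\{Z_s=0\}}\,\d s=0$.

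For the second assertion, by restarting the process at a time at which $Z$ vanishes it is enough to treat $Z_0=0$ and show $\mu_0\ge0$ and $\nu_0=0$; write $A_t=\int_0^t\mu_s\,\d s$ and $M_t=\int_0^t\nu_s\,\d B_s$, so $Z_t=A_t+M_t$. To get $\mu_0\ge0$: if $\P(\mu_0<0)>0$, then after restricting to $\{\mu_0\le-1/k\}$ for suitable $k$ and absorbing this event into a stopping time $\rho>0$ on which $\mu<0$, one has $A_t<0$ on $(0,\rho)$, so $Z\ge0$ forces $M_t\ge -A_t>0$ there; but $M_{\cdot\wedge\rho}$ is then a nonnegative continuous local martingale issued from zero, hence identically zero — a contradiction. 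To get $\nu_0=0$: if $\P(\nu_0\ne0)>0$, then after restricting to $\{|\nu_0|\ge1/k\}$ and localizing, there are $\delta>0$ and constants $c,m>0$ with $\langle M\rangle_t=\int_0^t\nu_s^2\,\d s\ge ct$ and $|A_t|\le mt$ on $[0,\delta]$; since $\langle M\rangle$ is then a strictly increasing homeomorphism of $[0,\delta]$, the Dambis--Dubins--Schwarz theorem supplies a Brownian motion $\beta$ with $M_t=\beta_{\langle M\rangle_t}$, and the inequality $M_t\ge -mt$, after reparametrizing by $u=\langle M\rangle_t\ge ct$, becomes $\beta_u\ge -(m/c)\,u$ for all small $u>0$, contradicting the law of the iterated logarithm for $\beta$ at the origin. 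Hence $\nu_0=0$.

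The main obstacle is the statement $\nu_0=0$ in the second assertion: the constraint $Z\ge0$ with $Z_0=0$ gives no useful $L^2$-control of $Z_t$ for small $t$ (the path may be large and positive with $\nu$ correspondingly large), so $\nu_0=0$ has to be extracted from a genuinely pathwise oscillation argument — a time change to Brownian motion followed by the law of the iterated logarithm at $0$ — rather than from a moment estimate. The second delicate point is the bookkeeping in the first assertion: one must notice that $\mu$ is needed to be positive on an entire time interval following $\sigma$, not merely on the zero set of $Z$, and that the hypothesis $L^0=0$ is precisely what removes the local-time term that would otherwise block the nonnegative-local-martingale comparison.
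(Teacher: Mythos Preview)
Your argument is correct. Part~\ref{L:nonneg:2} is essentially the paper's proof: both apply Tanaka's formula to $Z^-$ after the first negative excursion and exploit $L^0=0$; you phrase the contradiction via a nonnegative local martingale issued from zero, the paper takes an expectation, but the content is the same. Your derivation of $\int_0^t\1{Z_s=0}\d s=0$ via the occupation density formula is a minor variant of the paper's use of the jump relation $L^0=L^{0-}+2\int\1{Z=0}\mu\,\d s$.

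For part~\ref{L:nonneg:1} the two proofs genuinely diverge on the claim $\nu_0=0$. Your route is pathwise: time-change $M$ to a Brownian motion $\beta$ via Dambis--Dubins--Schwarz, then observe that $Z\ge0$ with $|A_t|\le mt$ forces $\beta_u\ge -(m/c)u$ for small $u$, contradicting the law of the iterated logarithm. The paper instead uses a Girsanov trick: for any $\phi\in\R$ it passes to the equivalent measure $\d\Q=\Ecal(-\phi B)_1\,\d\P$, under which $Z$ has drift $\mu-\phi\nu$; applying the already-established fact that the drift at $Z_0=0$ is nonnegative yields $(\mu_0-\phi\nu_0)\ge0$ for every $\phi$, forcing $\nu_0=0$. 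The Girsanov argument is slicker---it reduces $\nu_0=0$ to the $\mu_0\ge0$ case in one line and avoids any sample-path asymptotics---whereas your LIL approach is more self-contained (no measure change) but requires careful bookkeeping with the random constants $c,m,\delta$ after conditioning on the $\Fcal_0$-event $\{|\nu_0|\ge 1/k\}$.
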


\begin{proof}

After stopping we may assume that $Z_t$, $\int_0^t\mu_s\d s$, and $\int_0^t\nu_s\d B_s$ are uniformly bounded. This is done throughout the proof.

We first prove~{\ref{L:nonneg:2}}.  By \citet[Theorem~VI.1.7]{Revuz/Yor:1999} and using that $\mu>0$ on $\{Z=0\}$ and $L^0=0$ we get $0 = L^0_t =L^{0-}_t + 2\int_0^t \1{Z_s=0}\mu_s\d s \ge 0$. In particular, $\int_0^t\1{Z_s=0}\d s=0$, as claimed. Furthermore, Tanaka's formula \cite[Theorem~VI.1.2]{Revuz/Yor:1999} yields
\begin{equation} \label{eq:nonneg:1}
\begin{aligned}
Z_t^- &= -\int_0^t \1{Z_s\le 0}\d Z_s - \frac{1}{2}L^0_t \\
&= -\int_0^t\1{Z_s\le 0}\mu_s \d s - \int_0^t\1{Z_s\le 0}\nu_s \d B_s.
\end{aligned}
\end{equation}
Define stopping times $\rho=\inf\left\{ t\ge 0: Z_t<0\right\}$ and $\tau=\inf\left\{ t\ge \rho: \mu_t=0 \right\} \wedge (\rho+1)$. Using that $Z^-=0$ on $\{\rho=\infty\}$ as well as dominated convergence, we obtain
\[
\E\left[Z^-_{\tau\wedge n}\right] = \E\left[Z^-_{\tau\wedge n}\1{\rho<\infty}\right] \to \E\left[ Z^-_\tau\1{\rho<\infty}\right] \qquad (n\to\infty).
\]
Here $Z_\tau$ is well-defined on $\{\rho<\infty\}$ since $\tau<\infty$ on this set. On the other hand, by \eqref{eq:nonneg:1}, the fact that $\int_0^t\1{Z_s\le0}\mu_s\d s=\int_0^t\1{Z_s=0}\mu_s\d s=0$ on $\{\rho=\infty\}$, and monotone convergence, we get
\begin{align*}
\E\left[Z^-_{\tau\wedge n}\right]
&= \E\left[ - \int_0^{\tau\wedge n}\1{Z_s\le 0}\mu_s\d s\right]  = \E\left[ - \int_0^{\tau\wedge n}\1{Z_s\le 0}\mu_s\d s\, \1{\rho<\infty}\right] \\
&\to  \E\left[ - \int_0^\tau \1{Z_s\le 0}\mu_s\d s\, \1{\rho<\infty}\right] \qquad \text{as $n\to\infty$.}
\end{align*}
Consequently,
\begin{equation}\label{eq:nonneg:2}
\E\left[ Z^-_\tau\1{\rho<\infty}\right] = \E\left[ - \int_0^\tau \1{Z_s\le 0}\mu_s\d s\, \1{\rho<\infty}\right].
\end{equation}
The following hold on $\{\rho<\infty\}$: $\tau>\rho$; $Z_t\ge 0$ on $[0,\rho]$; $\mu_t>0$ on $[\rho,\tau)$; and $Z_t<0$ on some nonempty open subset of $(\rho,\tau)$. Therefore, the random variable inside the expectation on the right-hand side of~\eqref{eq:nonneg:2} is strictly negative on $\{\rho<\infty\}$. The left-hand side, however, is nonnegative, so we deduce $\P(\rho<\infty)=0$. Part~{\ref{L:nonneg:2}} is proved.

The proof of Part~{\ref{L:nonneg:1}} involves the same ideas used, for instance, in \citet[Proposition~3.1]{Spreij/Veerman:2012}. We first assume $Z_0=0$ and prove $\mu_0\ge0$ and $\nu_0=0$. Assume for contradiction that $\P(\mu_0<0)>0$, and define $\tau=\inf\{t\ge0:\mu_t\ge 0\}\wedge 1$. Then $0\le \E[Z_\tau] = \E[\int_0^\tau \mu_s\d s]<0$, a contradiction, whence $\mu_0\ge0$ as desired. Next, pick any $\phi\in\R$ and consider an equivalent measure $\d\Q=\Ecal(-\phi B)_1\d\P$. Then $B^\Q_t = B_t + \phi t$ is $\Q$-Brownian motion on $[0,1]$, and we have
\[
Z_t=\int_0^t(\mu_s-\phi\nu_s)\d s+\int_0^t\nu_s\d B^\Q_s.
\]
Pick any $\varepsilon>0$ and define $\sigma=\inf\{t\ge 0:|\nu_t|\le\varepsilon\}\wedge 1$. The first part of the proof applied to the stopped process $Z^\sigma$ under $\Q$ yields $(\mu_0-\phi\nu_0)\1{\sigma>0}\ge 0$ for all $\phi\in\R$. But this forces $\sigma=0$ and hence $|\nu_0|\le\varepsilon$. Since $\varepsilon>0$ was arbitrary, we get $\nu_0=0$ as desired.

Now, consider any stopping time $\rho$ such that $Z_\rho=0$ on $\{\rho<\infty\}$. Applying what we already proved to the process $(Z_{\rho+t}\1{\rho<\infty})_{t\ge0}$ with filtration $(\Fcal_{\rho+t}\cap\{\rho<\infty\})_{t\ge0}$ then yields $\mu_\rho\ge0$ and $\nu_\rho=0$ on $\{\rho<\infty\}$. Finally, let $\{\rho_n:n\in\N\}$ be a countable collection of such stopping times that are dense in $\{t:Z_t=0\}$. Applying the above result to each $\rho_n$ and using the continuity of $\mu$ and $\nu$, we obtain~{\ref{L:nonneg:1}}. 
\end{proof}

The following two examples show that the assumptions of Lemma~\ref{L:nonneg} are tight in the sense that the gap between {\ref{L:nonneg:2}} and {\ref{L:nonneg:1}} cannot be closed.

\begin{example}
The strict inequality appearing in Lemma~\ref{L:nonneg}{\ref{L:nonneg:2}} cannot be relaxed to a weak inequality: just consider the deterministic process $Z_t=(1-t)^3$.
\end{example}

\begin{example}
The assumption of vanishing local time at zero in Lemma~\ref{L:nonneg}{\ref{L:nonneg:2}} cannot be replaced by the zero volatility condition $\nu=0$ on $\{Z=0\}$, even if the strictly positive drift condition is retained. This is demonstrated by a construction that is closely related to the so-called Girsanov SDE; see \citet[Section~V.26]{Rogers/Williams:1994}. Let $Y$ be a one-dimensional Brownian motion, and define $\rho(y)=|y|^{-2\alpha}\vee 1$ for some $0<\alpha<1/4$. The occupation density formula implies that $\int_0^t\rho(Y_s)^2\d s=\int_{-\infty}^\infty (|y|^{-4\alpha}\vee 1)L^y_t(Y)\d y<\infty$ for all $t\ge 0$, so we may define a positive local martingale
\[
R_t = \exp\left( \int_0^t \rho(Y_s)\d Y_s - \frac{1}{2}\int_0^t \rho(Y_s)^2\d s\right).
\]
Let $\tau$ be a strictly positive stopping time such that the stopped process $R^\tau$ is a uniformly integrable martingale. Then define the equivalent probability measure $\d\Q=R_\tau\d\P$, under which the process $B_t=Y_t-\int_0^{t\wedge\tau}\rho(Y_s)\d s$ is Brownian motion. We now change time via
\[
\varphi_t = \int_0^t \rho(Y_s)\d s, \qquad A_u = \inf\{t\ge 0: \varphi_t > u\},
\]
and define $Z_u = Y_{A_u}$. This process satisfies $Z_u = B_{A_u} + u\wedge\sigma$, where $\sigma=\varphi_\tau$. Define $\beta_u=\int_0^u \rho(Z_v)^{1/2}\d B_{A_v}$, which is Brownian motion since $\langle\beta,\beta\rangle_u=\int_0^u\rho(Z_v)\d A_v=u$. This finally gives
\[
Z_u = \int_0^u (|Z_v|^\alpha\wedge 1) \d \beta_v + u\wedge\sigma.
\]
This process starts at zero, has zero volatility whenever $Z_t=0$, and strictly positive drift prior to the stopping time $\sigma$, which is strictly positive. Nonetheless, its sign changes infinitely often on any time interval $[0,t)$ since it is a time-changed Brownian motion viewed under an equivalent measure.
\end{example}

\section{Proof of Theorem~\ref{T:moments}}\label{secTmoments}

We first establish a lemma.

\begin{lemma} \label{L:exmom}
For any $k\in\N$ such that $\E[\|X_0\|^{2k}]<\infty$, there is a constant $C$ such that
\[
\E\left[ 1 + \|X_t\|^{2k} \mid \Fcal_0\right] \le \left(1+\|X_0\|^{2k}\right)\e^{Ct}, \qquad t\ge 0.
\]
\end{lemma}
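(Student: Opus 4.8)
The plan is to apply It\^o's formula to the function $x\mapsto \|x\|^{2k}=(x^\top x)^k$ along a solution $X$ of \eqref{eq:SDE}, take conditional expectations, and set up a Gronwall-type inequality. First I would compute, using \eqref{eq:abX}, that $\Gcal\|x\|^{2k}$ is a polynomial in $x$ of degree at most $2k$: indeed $\nabla \|x\|^{2k} = 2k\|x\|^{2k-2}x$ and $\nabla^2\|x\|^{2k} = 2k\|x\|^{2k-2}\Id + 4k(k-1)\|x\|^{2k-4}xx^\top$, so that $\Gcal\|x\|^{2k} = k\|x\|^{2k-2}\tr(a(x)) + 2k(k-1)\|x\|^{2k-4}x^\top a(x)x + 2k\|x\|^{2k-2}b(x)^\top x$; since $a_{ij}$ has degree $\le 2$ and $b_i$ degree $\le 1$, each term has degree $\le 2k$, hence there is a constant $C_0$ with $\Gcal\|x\|^{2k}\le C_0(1+\|x\|^{2k})$ for all $x$ (a bound on a polynomial of degree $\le 2k$ by $1+\|x\|^{2k}$). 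Equivalently $\Gcal(1+\|x\|^{2k})\le C_0(1+\|x\|^{2k})$.

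The next step is the usual localization. Let $\tau_m=\inf\{t\ge0:\|X_t\|\ge m\}$. It\^o's formula applied to $\phi(x)=1+\|x\|^{2k}$ gives
\[
\phi(X_{t\wedge\tau_m}) = \phi(X_0) + \int_0^{t\wedge\tau_m}\Gcal\phi(X_s)\,\d s + \int_0^{t\wedge\tau_m}\nabla\phi(X_s)^\top\sigma(X_s)\,\d W_s,
\]
and the stochastic integral is a true martingale up to $\tau_m$ because the integrand is bounded on $[0,\tau_m]$. Taking $\E[\,\cdot\mid\Fcal_0]$ and using the drift bound,
\[
\E[\phi(X_{t\wedge\tau_m})\mid\Fcal_0] \le \phi(X_0) + C_0\int_0^t \E[\phi(X_{s\wedge\tau_m})\mid\Fcal_0]\,\d s.
\]
Gronwall's inequality then yields $\E[\phi(X_{t\wedge\tau_m})\mid\Fcal_0]\le \phi(X_0)\e^{C_0 t}$ for every $m$. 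Since $\E[\|X_0\|^{2k}]<\infty$, the right-hand side is a.s.\ finite; letting $m\to\infty$ and applying Fatou's lemma (conditional version) gives $\E[\phi(X_t)\mid\Fcal_0]\le\phi(X_0)\e^{C_0 t}$, which is the claim with $C=C_0$.

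I do not expect a genuine obstacle here; the only point requiring a little care is justifying that the stochastic integral is a martingale after localization (so that it drops out under the conditional expectation) and that Fatou's lemma may be applied in the limit $m\to\infty$ — both are standard once the integrand is controlled on the stochastic interval $[0,t\wedge\tau_m]$. A minor subtlety is that $\sigma$ is only assumed continuous with $\sigma\sigma^\top=a$ on $E$, but since $X$ is $E$-valued we have $\|\nabla\phi(X_s)^\top\sigma(X_s)\|^2 = \nabla\phi(X_s)^\top a(X_s)\nabla\phi(X_s)$, which is again a polynomial in $X_s$ and hence bounded on $[0,\tau_m]$; so the martingale property and all the estimates go through using only $a$, $b$, and their polynomial degrees.
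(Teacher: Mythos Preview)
Your proposal is correct and follows essentially the same route as the paper's own proof: introduce $f(x)=1+\|x\|^{2k}$, use the polynomial property to get $\Gcal f\le C f$, localize via $\tau_m$, apply It\^o, Gronwall, and then Fatou as $m\to\infty$. Your argument is in fact slightly more explicit in deriving the bound $\Gcal f\le C f$ by computing $\nabla f$ and $\nabla^2 f$, whereas the paper simply invokes the polynomial property.
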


\begin{proof}
This is done as in the proof of Theorem~2.10 in~\cite{Cuchiero/etal:2012} using Gronwall's inequality. Specifically, let $f\in\Pol_{2k}(E)$ be given by $f(x)=1+\|x\|^{2k}$, and note that the polynomial   property implies that there is a constant $C$ with $|\Gcal f(x)| \le Cf(x)$ for all $x\in E$. For each $m$, let $\tau_m$ be the first exit time of $X$ from the ball $\{x\in E:\|x\|<m\}$. We can always choose a continuous version of $t\mapsto \E[f(X_{t\wedge\tau_m})\mid\Fcal_0]$, so let us fix such a version. Then by It\^o's formula and the martingale property of $\int_0^{t\wedge\tau_m}\nabla f(X_s)^\top\sigma(X_s)\d W_s$,
\begin{align*}
\E[f(X_{t\wedge\tau_m})\mid\Fcal_0]
&= f(X_0) + \E\left[\int_0^{t\wedge\tau_m}\Gcal f(X_s)\,\d s\Mid \Fcal_0 \right] \\
&\le f(X_0) + C\, \E\left[\int_0^{t\wedge\tau_m} f(X_s)\,\d s\Mid \Fcal_0 \right] \\
&\le f(X_0) + C\int_0^t\E[ f(X_{s\wedge\tau_m})\mid \Fcal_0 ]\, \d s.
\end{align*}
Gronwall's inequality now yields $\E[f(X_{t\wedge\tau_m})\mid\Fcal_0]\le f(X_0)\,\e^{Ct}$. Sending $m$ to infinity and applying Fatou's lemma gives the result. 
\end{proof}

We can now prove Theorem~\ref{T:moments}. For any $p\in\Pol_n(E)$, It\^o's formula yields
\[
p(X_u) = p(X_t) + \int_t^u \Gcal p(X_s) \d s + \int_t^u \nabla p(X_s)^\top \sigma(X_s)\d W_s.
\]
The quadratic variation of the right-hand side satisfies $\int_0^T\nabla p^\top a\, \nabla p(X_s)\d s\le C \int_0^T (1+\|X_s\|^{2n})\d s$ for some constant $C$. This has finite expectation by Lemma~\ref{L:exmom}, so the stochastic integral above is a martingale. Let $\vvec p\in\R^{{N}}$ be the coordinate representation of~$p$. Then~\eqref{eq:PT} and~\eqref{eq:GnPT} in conjunction with the linearity of the expectation and integration operators yield
\begin{align*}
\vvec p^\top \E[H(X_u) \mid \Fcal_t ]
&= \E[p(X_u) \mid \Fcal_t ]  = p(X_t) +  \E[\int_t^u \Gcal p(X_s) \d s\mid\Fcal_t]  \\
&={\vvec p\,}^\top H(X_t) + (G\,\vvec p\,)^\top\E[ \int_t^u  H(X_s)\d s \mid\Fcal_t  ].
\end{align*}
Fubini's theorem, justified by Lemma~\ref{L:exmom}, yields
\[
{\vvec p\,}^\top F(u) = {\vvec p\,}^\top H(X_t) + {\vvec p\,}^\top G^\top \int_t^u  F(s) \d s, \qquad t\le u\le T,
\]
where we define $F(u) = \E[H(X_u) \mid\Fcal_t]$. By choosing unit vectors for $\vvec p$ this gives a system of linear integral equations for $F(u)$, whose unique solution is $F(u)=\e^{(u-t)G^\top}H(X_t)$. Hence
\[
\E[p(X_T) \mid \Fcal_t ] = F(T)^\top \vvec p = H(X_t)^\top \e^{(T-t)G}\,\vvec p,
\]
as claimed. This completes the proof of the theorem.

\section{Proof of Theorem~\ref{T:expmom}}\label{secTexpmom}

Theorem~\ref{T:expmom} is an immediate corollary of the following result.

\begin{lemma}
 Consider the $d$-dimensional It\^o process $X$ with representation
\[ dX_t = (b+\beta X_t)dt + \sigma(X_t)\,dW_t \]
where $\sigma$ satisfies a square-root growth condition
\begin{equation}\label{sigmass}
\text{ $\|\sigma(X_t)\|^2 \le C(1+\|X_t\|)$ for all $t\ge0$}
\end{equation}
for some constant $C$. If
\begin{equation}\label{deltass}
\text{$ \E\left[ \e^{\delta \|X_0\|}\right]<\infty $ for some $\delta>0$,}
\end{equation}
then for each $T\ge 0$ there exists $\varepsilon>0$ with
\begin{equation}\label{Xexpbound}
\E\left[\e^{\varepsilon \|X_T\|}\right]<\infty.
\end{equation}
\end{lemma}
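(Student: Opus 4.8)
The plan is to dominate $\|X_t\|$ by a well-chosen Lyapunov function and then extract the exponential moment from an exponential supermartingale whose exponent decays in time. Work with the smooth function $f(x)=\sqrt{1+\|x\|^2}$, which satisfies $\|x\|\le f(x)\le 1+\|x\|$, $\|\nabla f(x)\|\le 1$, and $\|\nabla^2 f(x)\|\le 2\,f(x)^{-1}$. Writing $\Acal g=\tfrac12\tr(\sigma\sigma^\top\nabla^2 g)+(b+\beta x)^\top\nabla g$ for the generator of $X$ and $Y_t=f(X_t)$, the first step is to record two \emph{affine} estimates,
\[
\Acal f(X_t)\le C_1\bigl(1+Y_t\bigr)\qquad\text{and}\qquad\bigl|\sigma(X_t)^\top\nabla f(X_t)\bigr|^2\le C_2\bigl(1+Y_t\bigr),
\]
for suitable constants $C_1,C_2$. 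Both rely on~\eqref{sigmass}, which gives $\tr\bigl(\sigma(X_t)\sigma(X_t)^\top\bigr)\le C(1+\|X_t\|)\le C(1+Y_t)$: in the quadratic-variation term this is absorbed directly since $\nabla f$ is bounded; in $\Acal f$ the Hessian bound cancels one power of $f$, so that the trace term stays bounded, while the affine part of the drift contributes the $Y_t$ term through $|(b+\beta x)^\top\nabla f|\le |b|+\|\beta\|\,f(x)$.

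The second step is to construct a supermartingale of the form $M_t=\exp\bigl(\varepsilon(t)\,Y_t+k(t)\bigr)$ with deterministic $\varepsilon(\cdot)>0$ and $k(\cdot)$. By It\^o's formula the drift coefficient of $M$ equals $M_t$ times
\[
\varepsilon'(t)Y_t+k'(t)+\varepsilon(t)\,\Acal f(X_t)+\tfrac12\varepsilon(t)^2\bigl|\sigma(X_t)^\top\nabla f(X_t)\bigr|^2,
\]
which by the affine estimates and $\varepsilon(t)>0$ is at most $M_t$ times $\bigl(\varepsilon'(t)+C_1\varepsilon(t)+\tfrac{C_2}{2}\varepsilon(t)^2\bigr)Y_t+C_1\varepsilon(t)+\tfrac{C_2}{2}\varepsilon(t)^2+k'(t)$. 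I take $\varepsilon$ to solve the Riccati equation $\varepsilon'(t)=-C_1\varepsilon(t)-\tfrac{C_2}{2}\varepsilon(t)^2$ with $\varepsilon(0)=\delta$; since the right-hand side is strictly negative, the solution decreases monotonically and stays strictly positive on all of $[0,T]$ (no finite-time blow-up), so $\varepsilon:=\varepsilon(T)>0$ is well defined. Choosing $k(t)=-\int_0^t\bigl(C_1\varepsilon(s)+\tfrac{C_2}{2}\varepsilon(s)^2\bigr)\d s$ then makes the drift of $M$ nonpositive, so $M$ is a nonnegative continuous local supermartingale.

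The last step is routine localization. With stopping times $\tau_m=\inf\{t\ge 0:\|X_t\|\ge m\}$, which increase to $\infty$ almost surely since the linear/square-root growth precludes explosion, $M^{\tau_m}$ is a nonnegative local supermartingale on $[0,T]$, hence a genuine supermartingale, giving $\E[M_{T\wedge\tau_m}]\le\E[M_0]$; Fatou's lemma then yields $\E[M_T]\le\E[M_0]=\E[\e^{\delta f(X_0)}]\le\e^{\delta}\,\E[\e^{\delta\|X_0\|}]<\infty$ by~\eqref{deltass}. Since $M_T=\e^{k(T)}\e^{\varepsilon f(X_T)}\ge\e^{k(T)}\e^{\varepsilon\|X_T\|}$, this gives $\E[\e^{\varepsilon\|X_T\|}]\le\e^{-k(T)}\E[M_T]<\infty$, which is~\eqref{Xexpbound}.

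The main obstacle—and the reason a naive argument fails—is precisely the choice of exponent: a \emph{constant} $\varepsilon$ does not work, because the drift of $\e^{\varepsilon Y_t}$ carries a term proportional to $Y_t\,\e^{\varepsilon Y_t}$, so Gronwall only controls $\E[\e^{\varepsilon Y_t}]$ up to a possibly finite explosion time. Letting $\varepsilon(t)$ decay along the Riccati ODE exactly annihilates the $Y_t$-coefficient, after which the residual constant is absorbed into $k(t)$. The remaining work—the affine Hessian estimate and the supermartingale/Fatou passage—is standard. An alternative route would be to dominate $Y$ by a CIR-type one-dimensional diffusion and cite known exponential-moment bounds for such processes, but the direct Lyapunov argument above is self-contained.
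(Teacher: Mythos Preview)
Your argument is correct and gives a genuinely different proof from the paper's. The paper first removes the affine drift by variation of constants, writing $X_t=A_t+\e^{-\beta(T-t)}Y_t$ with $A_t$ depending only on $X_0$ and $Y$ a pure stochastic integral; it then applies It\^o's formula to $Z_t=\sqrt{1+\|Y_t\|}$ and observes that, thanks to the square-root growth \eqref{sigmass}, \emph{both} the drift and the diffusion coefficient of the one-dimensional process $Z$ are bounded. Hajek's comparison theorem then dominates $Z_T$ by a Gaussian $V$, whence $\E[\e^{\varepsilon' Z_T^2}]<\infty$, and since $Z_T^2=1+\|Y_T\|$ the exponential moment for $\|X_T\|$ follows via H\"older. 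Your approach instead works directly with $X$, absorbs the linear drift into the affine estimate $\Acal f\le C_1(1+f)$, and compensates for it by letting the exponent $\varepsilon(t)$ decay along the Riccati ODE---this is precisely what the paper's variation-of-constants step buys, namely the elimination of the $Y_t$-coefficient in the drift of the exponential. Your route is more self-contained (no external comparison result), while the paper's is arguably cleaner once Hajek's theorem is available; your closing remark about domination by a one-dimensional diffusion is essentially the paper's strategy, though it compares with arithmetic Brownian motion rather than a CIR process.
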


\begin{proof}

Fix $T\ge 0$. Variation of constants lets us rewrite $X_t = A_t + \e^{-\beta(T-t)}Y_t $ with
\[ A_t = \e^{\beta t} X_0+\int_0^t \e^{\beta(t- s)}b\,ds \]
and
\[
Y_t=  \int_0^t \e^{\beta (T- s)}\sigma(X_s)\,dW_s = \int_0^t \sigma^Y_s\,dW_s,
\]
where we write $\sigma^Y_t = \e^{\beta (T- t)}\sigma(A_t + \e^{-\beta(T-t)}Y_t )$. By \eqref{sigmass} the dispersion process $\sigma^Y_t$ satisfies
\begin{equation}\label{Ybound}
 \|\sigma^Y_t\|^2 \le  C_Y(1+\| Y_t\|)
 \end{equation}
for some constant $C_Y$.

Now let $f(y)$ be a real-valued and positive smooth function on $\R^d$ with $f(y)=\sqrt{1+\|y\|}$ for $\|y\|>1$. Some differential calculus gives, for $y\neq 0$,
\[ \nabla \|y\| = \frac{y}{\|y\|} \quad\text{and}\quad \frac{\partial^2 \|y\|}{\partial y_i\partial y_j}=\begin{cases} \frac{1}{\|y\|}-\frac{1}{2}\frac{y_i^2}{\|y\|^3}, & i=j\\ -\frac{1}{2}\frac{y_i y_j}{\|y\|^3} ,& i\neq j .\end{cases}\]
Hence
\[ \nabla f(y)= \frac{1}{2\sqrt{1+\|y\|}}\frac{ y}{\|y\|} \]
and
\[ \frac{\partial^2 f(y)}{\partial y_i\partial y_j}=-\frac{1}{4\sqrt{1+\|y\|}^3}\frac{ y_i}{\|y\|}\frac{ y}{\|y\|}+\frac{1}{2\sqrt{1+\|y\|}}\times
\begin{cases} \frac{1}{\|y\|}-\frac{1}{2}\frac{y_i^2}{\|y\|^3}, & i=j\\ -\frac{1}{2}\frac{y_i y_j}{\|y\|^3} ,& i\neq j \end{cases}\]
for $\|y\|>1$, while first and second order derivate of $f(y)$ are uniformly bounded for $\|y\|\le 1$.

It\^o's formula for $Z_t=f(Y_t)$ gives
\[ dZ_t = \mu^Z_t \,dt +\sigma^Z_t \,dW_t \]
with drift and dispersion processes
\[ \mu^Z_t =  \frac{1}{2}\sum_{i,j=1}^d \frac{\partial^2 f(Y_t)}{\partial y_i\partial y_j} (\sigma^Y_t{\sigma^Y_t}^\top)_{ij} ,\quad \sigma^Z_t= \nabla f(Y_t)^\top\sigma^Y_t .\]
In view of \eqref{Ybound} and the above expressions for $\nabla f(y)$ and $\frac{\partial^2 f(y)}{\partial y_i\partial y_j}$ these are bounded,
\[  \mu^Z_t \le m\quad\text{and}\quad \| \sigma^Z_t \|\le \rho, \]
for some constants $m$ and $\rho$. \cite[Theorem 1.3]{Hajek:1985} now implies that
\[ \E\left[\Phi(Z_T)\right] \le \E\left[\Phi(V)\right] \]
for any nondecreasing convex function $\Phi$ on $\R$, where $V$ is a Gaussian random variable with mean $f(0)+m T$ and variance $\rho^2 T$. Hence, for any $0<\varepsilon' <1/(2\rho^2 T)$ we have $\E[\e^{\varepsilon' V^2}] <\infty$.
We now let $\Phi$ be a nondecreasing convex function on $\R$ with $\Phi(z) = \e^{\varepsilon' z^2}$ for $z\ge 0$. Noting that $Z_T$ is positive, we obtain $\E[ \e^{\varepsilon' Z_T^2}]<\infty$.
As $f(y)^2=1+\|y\|$ for $\|y\|>1$, this implies $\E[ \e^{\varepsilon' \|Y_T\|}]<\infty$.
Combining this with the fact that $\|X_T\| \le \|A_T\| + \|Y_T\| $ and \eqref{deltass}, we obtain using H\"older's inequality the existence of some $\varepsilon>0$ with \eqref{Xexpbound}. 
\end{proof}

\section{Proof of Theorem~\ref{T:unique2}}\label{secTunique2}

We first provide a lemma.

\begin{lemma} \label{L:joint uniqueness}
Assume uniqueness in law holds for $E_Y$-valued solutions to~\eqref{eq:SDE_Y}. Let $Y^1$, $Y^2$ be two $E_Y$-valued solutions to~\eqref{eq:SDE_Y} with driving Brownian motions $W^1$, $W^2$ and with $Y^1_0=Y^2_0=y$ for some $y\in E_Y$. Then $(Y^1,W^1)$ and $(Y^2,W^2)$ have the same law.
\end{lemma}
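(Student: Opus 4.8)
The plan is to show that the joint law of a weak solution $(Y^i,W^i)$ of~\eqref{eq:SDE_Y} is already determined by the law of the solution component $Y^i$ alone, which by hypothesis (uniqueness in law, applied with the common deterministic starting point $y$) is the same for $i=1,2$. The mechanism is to recover the driving Brownian motion, up to extra independent noise, as a fixed measurable functional of the path of $Y^i$.

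First I would reconstruct a Brownian motion from the solution. Fix $i$, enlarge the probability space to carry a $d$-dimensional Brownian motion $\bar B^i$ independent of $(Y^i,W^i)$, write $N^i_t=Y^i_t-y-\int_0^t b_Y(Y^i_s)\,ds$ for the martingale part of $Y^i$, let $\sigma_Y(y)^+$ be the Moore--Penrose pseudoinverse (a Borel function of $y$), and put $P^\perp(y)=\Id-\sigma_Y(y)^+\sigma_Y(y)$. Define
\[
\widehat W^i_t=\int_0^t\sigma_Y(Y^i_s)^+\,dN^i_s+\int_0^tP^\perp(Y^i_s)\,d\bar B^i_s.
\]
Since $d\langle N^i\rangle_s=\sigma_Y(Y^i_s)\sigma_Y(Y^i_s)^\top\,ds$, the pseudoinverse identities (conveniently checked on the singular value decomposition of $\sigma_Y$) give $\langle \widehat W^i\rangle_t=\Id_d\,t$, so $\widehat W^i$ is a Brownian motion in the enlarged filtration by Lévy's characterization, and $\sigma_Y(Y^i_s)\,d\widehat W^i_s=dN^i_s$, so $(Y^i,\widehat W^i)$ is again an $E_Y$-valued solution to~\eqref{eq:SDE_Y}.

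Second, the stochastic integrals defining $\widehat W^i$ are a.s.\ limits of Riemann sums along a fixed sequence of partitions, so $\widehat W^i=\Psi(Y^i,\bar B^i)$ for one fixed measurable map $\Psi$ depending only on $b_Y$, $\sigma_Y$ and $y$. Since $Y^1$ and $Y^2$ have the same law and each $\bar B^i$ is a Brownian motion independent of $Y^i$, the pairs $(Y^1,\bar B^1)$ and $(Y^2,\bar B^2)$ have the same law, hence so do $(Y^1,\widehat W^1)$ and $(Y^2,\widehat W^2)$.

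The hard part will be the third step: showing $(Y^i,W^i)\stackrel{d}{=}(Y^i,\widehat W^i)$ for each $i$. Decomposing $W^i=\int_0^\cdot\sigma_Y(Y^i_s)^+\,dN^i_s+B^i$ with $B^i:=W^i-\int_0^\cdot\sigma_Y(Y^i_s)^+\,dN^i_s$, the first summand is identical to that of $\widehat W^i$ as a functional of $Y^i$, and a bracket computation yields $\langle B^i\rangle_t=\int_0^tP^\perp(Y^i_s)\,ds$ and $\langle B^i,N^i\rangle=0$. Thus it suffices to prove that, conditionally on $Y^i$, the residual $B^i$ is a centered Gaussian martingale with bracket $\int_0^\cdot P^\perp(Y^i_s)\,ds$ --- equivalently, that $B^i$ remains a local martingale after enlarging the filtration by $\sigma(Y^i)$. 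This is precisely where the uniqueness-in-law hypothesis enters: it amounts to well-posedness of the martingale problem attached to~\eqref{eq:SDE_Y}, which forces the conditional law of the driving noise given $Y^i$ to be the universal one already exhibited by $\widehat W^i$; phrased differently, this step is the classical fact that uniqueness in law for a stochastic differential equation entails joint uniqueness in law of the solution together with its driving Brownian motion. Chaining the three steps gives $(Y^1,W^1)\stackrel{d}{=}(Y^1,\widehat W^1)\stackrel{d}{=}(Y^2,\widehat W^2)\stackrel{d}{=}(Y^2,W^2)$, which is the claim.
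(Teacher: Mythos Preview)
Your third step is circular. You correctly reduce the problem to showing that the residual $B^i$ remains a martingale in the filtration enlarged by $\sigma(Y^i)$, but then you justify this by asserting that uniqueness in law ``forces the conditional law of the driving noise given $Y^i$ to be the universal one,'' and you explicitly identify this with ``the classical fact that uniqueness in law \ldots\ entails joint uniqueness in law of the solution together with its driving Brownian motion.'' That classical fact \emph{is} the content of the lemma you are trying to prove. Steps~1 and~2 are fine but do no real work: once you grant step~3, the whole construction with $\widehat W^i$ and $\bar B^i$ is unnecessary, since step~3 alone already gives $(Y^1,W^1)\stackrel{d}{=}(Y^2,W^2)$ directly.

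The paper does not attempt a self-contained argument here. It first modifies the coefficients to $\widehat b_Y=b_Y\mathbf 1_{E_Y}$ and $\widehat\sigma_Y=\sigma_Y\mathbf 1_{E_Y}$, checks (using closedness of $E_Y$) that any $\R^m$-valued solution starting in $E_Y$ stays there, so that the $E_Y$-uniqueness hypothesis upgrades to uniqueness in law for \emph{unconstrained} solutions of the modified SDE, and then invokes Cherny's theorem (\emph{Theory Probab.\ Appl.}~46, 2002, Theorem~3.1), which is precisely the nontrivial result that uniqueness in law implies joint uniqueness in law. Your step~3 needs either this citation or a genuine proof of it; the latter requires a Yamada--Watanabe style coupling on path space and is not a one-line consequence of well-posedness. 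Note also that your sketch never addresses the passage from $E_Y$-valued uniqueness to unconstrained uniqueness, which is where the coefficient modification enters.
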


\begin{proof} Consider the equation $\d Y_t = \widehat b_Y(Y_t)\,\d t + \widehat \sigma_Y(Y_t)\,\d W_t$, where $\widehat b_Y(y)=b_Y(y)\oo_{E_Y}(y)$ and $\widehat \sigma_Y(y)=\sigma_Y(y)\oo_{E_Y}(y)$. Since $E_Y$ is closed, any solution $Y$ to this equation with $Y_0\in E_Y$ must remain inside $E_Y$. To see this, let $\tau=\inf\{t:Y_t\notin E_Y\}$. Then there exists $\varepsilon>0$, depending on $\omega$, such that $Y_t\notin E_Y$ for all $\tau<t<\tau+\varepsilon$. However, since $\widehat b_Y$ and $\widehat \sigma_Y$ vanish outside $E_Y$, $Y_t$ is constant on $(\tau,\tau+\varepsilon)$. Since $E_Y$ is closed this is only possible if $\tau=\infty$.

The hypothesis of the lemma now implies that uniqueness in law for $\R^d$-valued solutions holds for $\d Y_t = \widehat b_Y(Y_t)\,\d t + \widehat \sigma_Y(Y_t)\,\d W_t$. Since $(Y^i,W^i)$, $i=1,2$, are two solutions with $Y^1_0=Y^2_0=y$, \citet[Theorem~3.1]{Cherny:2002} shows that $(W^1,Y^1)$ and $(W^2,Y^2)$ have the same law. 
\end{proof}

The proof of Theorem~\ref{T:unique2} follows along the lines of the proof of the Yamada-Watanabe theorem that pathwise uniqueness implies uniqueness in law; see \citet[Theorem~V.17.1]{Rogers/Williams:1994}. Let $(W^i,Y^i,Z^i)$, $i=1,2$, be $E$-valued weak solutions to \eqref{eq:SDE_Y}--\eqref{eq:SDE_Z} starting from $(y_0,z_0)\in E\subseteq\R^m\times\R^n$. We need to show that $(Y^1,Z^1)$ and $(Y^2,Z^2)$ have the same law. Since uniqueness in law holds for $E_Y$-valued solutions to~\eqref{eq:SDE_Y}, Lemma~\ref{L:joint uniqueness} implies that $(W^1,Y^1)$ and $(W^2,Y^2)$ have the same law, which we denote by $\pi(\d w,\d y)$. Let $Q^i(\d z;w,y)$, $i=1,2$, denote a regular conditional distribution of $Z^i$ given $(W^i,Y^i)$. We equip the path space $C(\R_+,\R^d\times\R^m\times\R^n\times\R^n)$ with the probability measure
\[
\overline\P(\d w,\d y,\d z,\d z') = \pi(\d w, \d y)Q^1(\d z; w,y)Q^2(\d z'; w,y).
\]
Let $(W,Y,Z,Z')$ denote the coordinate process on $C(\R_+,\R^d\times\R^m\times\R^n\times\R^n)$. Then the law under $\overline \P$ of $(W,Y,Z)$ equals the law of $(W^1,Y^1,Z^1)$, and the law under $\overline \P$ of $(W,Y,Z')$ equals the law of $(W^2,Y^2,Z^2)$. By well-known arguments, see for instance \citet[Lemma~V.10.1 and Theorems~V.10.4 and V.17.1]{Rogers/Williams:1994}, it follows that
\begin{align*}
Y_t &= y_0 + \int_0^t b_Y(Y_s)\d s + \int_0^t \sigma_Y(Y_s)\d W_s \\
Z_t &= z_0 + \int_0^t b_Z(Y_s,Z_s)\d s + \int_0^t \sigma_Z(Y_s,Z_s)\d W_s \\
Z'_t &= z_0 + \int_0^t b_Z(Y_s,Z'_s)\d s + \int_0^t \sigma_Z(Y_s,Z'_s)\d W_s.
\end{align*}
By localization we may assume that $b_Z$ and $\sigma_Z$ are Lipschitz in $z$, uniformly in $y$. A standard argument based on the BDG inequalities and Jensen's inequality (see \citet[Corollary~V.11.7]{Rogers/Williams:1994}) together with Gronwall's inequality yields $\overline \P(Z'=Z)=1$. Hence
\[
{\rm Law}(Y^1,Z^1) = {\rm Law}(Y,Z) = {\rm Law}(Y,Z') = {\rm Law}(Y^2,Z^2),
\]
as was to be shown.

\begin{remark}
Theorem~\ref{T:unique2} carries over, and its proof literally goes through, to the case where $(Y,Z)$ is an arbitrary $E$-valued diffusion that solves \eqref{eq:SDE_Y}--\eqref{eq:SDE_Z} and where uniqueness in law for $E_Y$-valued solutions to~\eqref{eq:SDE_Y} holds, provided \eqref{T:unique2Ass} is replaced by the assumption that both $b_Z$ and $\sigma_Z$ are locally Lipschitz in~$z$ locally in~$y$ on $E$. That is, for each compact subset $K\subseteq E$, there exists a constant~$\kappa$ such that for all $(y,z,y',z')\in K\times K$,
\[
\|b_Z(y,z) - b_Z(y',z')\| + \| \sigma_Z(y,z) - \sigma_Z(y',z') \| \le \kappa \|z-z'\|.
\]
\end{remark}

\section{Proof of Theorem~\ref{T:existence}} \label{secTexistence}

The proof of Theorem~\ref{T:existence} consists of two main parts. First, we construct coefficients $\widehat a=\widehat \sigma\widehat \sigma^\top$ and $\widehat b$ that coincide with $a$ and $b$ on $E$, such that a local solution to~\eqref{eq:SDE}, with $b$ and $\sigma$ replaced by $\widehat b$ and $\widehat \sigma$, can be obtained with values in a neighborhood of $E$ in $M$. This relies on~\ref{G1} and \ref{T:existence:q}, and occupies this section up to and including Lemma~\ref{L:existence}. Second, we complete the proof by showing that this solution in fact stays inside~$E$ and spends zero time in the sets $\{p=0\}$, $p\in\Pcal$. This relies on \ref{G2} and \ref{T:existence:p}.

Let $\pi:\S^d\to\S^d_+$ be the Euclidean metric projection onto the positive semidefinite cone. It has the following well-known property.

\begin{lemma} \label{L:proj}
For any symmetric matrix $A\in\S^d$ with spectral decomposition $A=S\Lambda S^\top$, we have $\pi(A)=S\Lambda^+ S^\top$, where $\Lambda^+$ is the element-wise positive part of $\Lambda$.
\end{lemma}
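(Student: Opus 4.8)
The plan is to exploit the orthogonal invariance of the Frobenius norm together with the elementary structure of the positive semidefinite cone. Recall that the Euclidean metric on $\S^d$ comes from the inner product $\langle X,Y\rangle=\tr(XY)$, that $\S^d_+$ is a closed convex cone, and hence that the metric projection $\pi(A)$ is well-defined and characterized as the \emph{unique} minimizer of $B\mapsto\|A-B\|$ over $B\in\S^d_+$. It therefore suffices to verify that $B^\ast=S\Lambda^+S^\top$ attains this minimum.

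First I would reduce to the diagonal case. Since $S$ is orthogonal, the map $B\mapsto S^\top B S$ is a bijection of $\S^d_+$ onto itself, and $\|A-B\|=\|S^\top(A-B)S\|=\|\Lambda-S^\top B S\|$. Writing $C=S^\top B S$, the problem becomes: minimize $\|\Lambda-C\|$ over $C\in\S^d_+$, where $\Lambda=\Diag(\lambda_1,\dots,\lambda_d)$.

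Next, for any $C\in\S^d_+$ let $D=\Diag(C_{11},\dots,C_{dd})$ be its diagonal part. Because the diagonal entries of a positive semidefinite matrix are nonnegative, $D\in\S^d_+$; and because $\Lambda$ is diagonal, $\|\Lambda-D\|^2=\sum_i(\lambda_i-C_{ii})^2\le\sum_i(\lambda_i-C_{ii})^2+\sum_{i\ne j}C_{ij}^2=\|\Lambda-C\|^2$. Hence the minimization may be restricted to diagonal matrices $C=\Diag(c_1,\dots,c_d)$ with $c_i\ge0$, where it decouples into the scalar problems $\min_{c_i\ge0}(\lambda_i-c_i)^2$, each solved by $c_i=\lambda_i^+$. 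Therefore the minimizer is $C=\Lambda^+$, i.e. $B^\ast=S\Lambda^+S^\top$, as claimed.

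I do not expect a genuine obstacle here; the only point requiring a moment's care is the reduction to diagonal $C$, which rests on the observation that passing to the diagonal part keeps a matrix in $\S^d_+$ and cannot increase the distance to the already diagonal matrix $\Lambda$. Uniqueness of $\pi(A)$ is automatic from convexity of $\S^d_+$, so no separate argument is needed.
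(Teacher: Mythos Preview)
Your argument is correct. The reduction to the diagonal case via orthogonal invariance is clean, and the key observation---that replacing $C\in\S^d_+$ by its diagonal part $D$ keeps $D\in\S^d_+$ (since $C_{ii}=e_i^\top C e_i\ge0$) while not increasing the distance to the diagonal matrix $\Lambda$---is exactly what is needed to decouple the problem into scalar projections.

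The paper takes a different route: it invokes the fact that the ordered-eigenvalue map $\lambda:\S^d\to\R^d$ is $1$-Lipschitz (a Hoffman--Wielandt type result, cited from Horn and Johnson), and then chains the inequalities
\[
\|A-S\Lambda^+S^\top\| = \|\lambda(A)-\lambda(A)^+\| \le \|\lambda(A)-\lambda(B)\| \le \|A-B\|
\]
for any $B\in\S^d_+$. Your proof is more self-contained and elementary, requiring no external spectral inequality; the paper's is shorter on the page but leans on a nontrivial cited result. Both establish the same minimizer and both rely on uniqueness from convexity of $\S^d_+$ to conclude.
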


\begin{proof}
This result follows from the fact that the map $\lambda:\S^d\to\R^d$ taking a symmetric matrix to its ordered eigenvalues is 1-Lipschitz; see \citet[Theorem~7.4.51]{Horn/Johnson:1985}. Indeed, for any $B\in\S^d_+$ we have
\[
\|A-S\Lambda^+S^\top\| = \|\lambda(A)-\lambda(A)^+\| \le \|\lambda(A)-\lambda(B)\| \le \|A-B\|.
\]
Here the first inequality uses that the projection of an ordered vector $x\in\R^d$ onto the set of ordered vectors with nonnegative entries is simply $x^+$. 
\end{proof}

We will use the projection $\pi$ to modify the given coefficients $a$ and $b$ outside $E$ in order to obtain candidate coefficients for the stochastic differential equation~\eqref{eq:SDE}. The diffusion coefficients are defined as follows:
\[
\widehat a(x) = \pi\circ a(x), \qquad \widehat \sigma(x) = \widehat a(x)^{1/2}.
\]
In order to construct the drift coefficient $\widehat b$ we need the following lemma.

\begin{lemma} \label{L:driftcorr}
There exists a continuous map $\widehat b :\R^d\to\R^d$ with $\widehat b=b$ on $E$ and such that the operator $\widehat\Gcal$ given by
\[
\widehat\Gcal f = \frac{1}{2}\tr( \widehat a\, \nabla^2 f) + \widehat b\,^{\top} \nabla f
\]
satisfies $\widehat\Gcal f=\Gcal f$ on $E$ and $\widehat \Gcal q = 0 $ on $M$ for all $q\in\Qcal$.
\end{lemma}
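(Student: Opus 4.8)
The plan is to fix the drift only in the directions that are tangent to $M$, leaving the normal directions free to be chosen by continuity. Concretely, by~\ref{G1} the gradients $\nabla q(x)$, $q\in\Qcal$, are linearly independent at every $x\in M$, and by a standard argument (continuity of the determinant of the Gram matrix $\left(\nabla q_i(x)^\top\nabla q_j(x)\right)_{i,j}$) they remain linearly independent on an open neighborhood $U$ of $M$ in $\R^d$. On $U$ one can therefore form, for each $x$, the orthogonal projection $P(x)$ onto the span of $\{\nabla q(x):q\in\Qcal\}$; this $P$ is a continuous (indeed smooth, away from the closure issues at $\partial U$) matrix-valued map. The idea is then to write $\widehat b = b + P\,v$ for a suitable continuous correction field $v:\R^d\to\R^d$ supported in $U$, chosen so that the two requirements $\widehat\Gcal q = 0$ on $M$ for each $q\in\Qcal$, and $\widehat b = b$ on $E$, are both met.

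First I would compute what the constraint $\widehat\Gcal q = 0$ on $M$ demands. Since $\widehat a = a$ on $E$ but $\widehat a = \pi\circ a$ only on $M\setminus E$, write $\widehat\Gcal q = \tfrac12\tr(\widehat a\,\nabla^2 q) + (b + Pv)^\top\nabla q$. Using \ref{T:existence:q} we have $a\,\nabla q = 0$ and $\Gcal q = 0$ on $M$, so on $M$ the quantity $\tfrac12\tr(\widehat a\,\nabla^2 q) + b^\top\nabla q$ equals $\tfrac12\tr\!\big((\widehat a - a)\nabla^2 q\big)$, which is a known continuous function on $M$; call its vector of values (over $q\in\Qcal$) $-g(x)\in\R^{|\Qcal|}$. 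We need $(Pv)^\top\nabla q = g$ on $M$, i.e.\ $N(x)^\top v(x) = g(x)$ where $N(x)$ is the matrix with columns $\nabla q(x)$; because $P(x)v(x)$ lies in the column space of $N(x)$ and $N(x)$ has full column rank, this linear system has the unique minimal-norm solution $v(x) = N(x)\big(N(x)^\top N(x)\big)^{-1} g(x)$ on $M$, and this formula extends continuously to $U$. Finally, to guarantee $\widehat b = b$ on $E$ one observes that on $M\cap E$ we have $\widehat a = a$, hence $g = 0$ there, hence $v = 0$ on $M\cap E$; multiplying $v$ by a continuous cutoff $\chi$ that is $1$ near $M\setminus E$ and $0$ on a neighborhood of $E$ (this uses that $E$ is closed, so $M\setminus E$ and $E$ have disjoint closures relative to $M$... more carefully, one localizes near each point of $M$), and extending $\chi v$ by zero outside $U$, yields the desired continuous $\widehat b := b + \chi\,Pv$.

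The main obstacle is the gluing: one must simultaneously arrange $\widehat b = b$ on all of $E$ (not just on $M\cap E$ — but note $b$ and $\widehat b$ only need to agree on $E$, and the correction lives in $U\supseteq M$, so outside a neighborhood of $M$ there is nothing to check) and produce a \emph{globally} continuous $\widehat b$ on $\R^d$ even though $\pi\circ a$, and hence $g$, is only controlled on $M$. The clean way is to define $\widehat b$ on $U$ by the formula above with an appropriate cutoff that vanishes near $E$ and near $\partial U$, set $\widehat b = b$ on $\R^d\setminus U$, and check continuity along the interface $\partial U$ (where the cutoff is $0$) and along $E$ (where $v=0$ because $g=0$ on $M\cap E$ and $v$ depends continuously on $g$). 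I expect the verification that the two pieces match continuously — using that the correction term vanishes on a neighborhood of $E$ within $M$ and can be tapered to zero before reaching $\partial U$ — to be the only genuinely delicate point; everything else is the linear algebra of solving $N^\top v = g$ with $N$ of full rank, which is routine given~\ref{G1}.
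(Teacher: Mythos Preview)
Your core strategy---solve the linear system $N(x)^\top v(x)=g(x)$ for the correction $v$ using the full-rank gradient matrix $N(x)=[\nabla q_1(x)\cdots\nabla q_m(x)]$, and take the Moore--Penrose solution $v=N(N^\top N)^{-1}g$---is exactly what the paper does. (The paper also includes a curve argument on $M$ to show $g$ lies in the range of $N^\top$, but since $N^\top$ has full row rank by~\ref{G1} this is automatic; your shortcut is fine.)

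There is, however, a genuine error in the gluing step. You propose multiplying $v$ by a cutoff $\chi$ that vanishes on a neighborhood of $E$ and equals $1$ near $M\setminus E$, asserting that ``$M\setminus E$ and $E$ have disjoint closures relative to $M$''. This is false: since $E$ is closed in $M$, the relative closure of $M\setminus E$ meets $E$ along the relative boundary $\partial_M E$, which is typically nonempty. At a point $x\in M\setminus E$ arbitrarily close to $\partial_M E$ one may have $g(x)\ne 0$, but your $\chi$ is already transitioning toward $0$ there, so $\chi(x)v(x)\ne v(x)$ and the constraint $\widehat\Gcal q(x)=0$ fails. In short, the cutoff near $E$ destroys the very identity you are trying to enforce on $M$.

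The fix is simply to drop that cutoff: you already observed that $g=0$ on $E$ (because $\widehat a=a$ there by~\ref{T:existence:psd}), hence the Moore--Penrose formula gives $v=0$ on $E$ automatically. No further work is needed to get $\widehat b=b$ on $E$. (Note also that $E\subseteq M$ by definition, so your distinction between ``$E$'' and ``$M\cap E$'' is vacuous.) For the global extension, the paper shrinks $U$ so that $v$ is continuous on $\overline U$ and then invokes the Tietze extension theorem; your alternative of using a cutoff that is $1$ on $M$ and vanishes near $\partial U$ would work equally well, since the constraint only lives on $M\subseteq U$.
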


\begin{proof}
We first prove that there exists a continuous map $c:\R^d\to\R^d$ such that
\begin{equation}\label{cprop}
\text{$c=0$ on $E$ and $\nabla q^\top c = - \frac{1}{2}\tr\left( (\widehat a-a)\,\nabla^2 q \right) $ on $M$ for all~$q\in\Qcal$. }
\end{equation}
Indeed, let $a=S\Lambda S^\top$ be the spectral decomposition of $a$, so that the columns $S_i$ of $S$ constitute an orthonormal basis of eigenvectors of $a$, and the diagonal elements $\lambda_i$ of $\Lambda$ are the corresponding eigenvalues. These quantities depend on~$x$ in a possibly discontinuous way. For each $q\in\Qcal$,
\begin{equation} \label{eq:driftcorr:1}
\tr\left((\widehat a-a)\,\nabla^2 q \right) = \tr\left( S\Lambda^- S^\top \nabla^2 q\right) = \sum_{i=1}^d \lambda_i^-\, S_i^\top\, \nabla^2q\, S_i.
\end{equation}
Consider now any fixed $x\in M$. For each $i$ such that $\lambda_i(x)^-\ne 0$, $S_i(x)$ lies in the tangent space of~$M$ at~$x$. Thus we may find a smooth path $\gamma_i:(-1,1)\to M$ such that $\gamma_i(0)=x$ and $\gamma_i'(0)=S_i(x)$. For any $q\in\Qcal$ we have $q=0$ on $M$ by definition, whence
\[
0 = \frac{\d^2}{\d s^2} (q \circ \gamma_i)(0) = \tr\left( \nabla^2 q(x)  \gamma_i'(0) \gamma_i'(0)^\top \right) + \nabla q(x)^\top \gamma_i''(0),
\]
or equivalently, $S_i(x)^\top \nabla^2 q(x) S_i(x) = -\nabla q(x)^\top \gamma_i'(0)$. In view of~\eqref{eq:driftcorr:1} this yields
\[
\tr\left((\widehat a(x)- a(x))\,\nabla^2 q(x) \right) = -\nabla q(x)^\top \sum_{i=1}^d \lambda_i(x)^-\gamma_i'(0) \quad\text{for all}\quad q\in\Qcal.
\]
Let $q_1,\ldots,q_m$ be an enumeration of the elements of $\Qcal$, and write the above equation in vector form:
\[
\begin{pmatrix}
\tr\left((\widehat a(x)- a(x))\,\nabla^2 q_1(x) \right) \\
\vdots \\
\tr\left((\widehat a(x)- a(x))\,\nabla^2 q_m(x) \right)
\end{pmatrix}
=
- \begin{pmatrix}
\nabla q_1(x)^\top \\
\vdots \\
\nabla q_m(x)^\top
\end{pmatrix}
\sum_{i=1}^d \lambda_i(x)^-\gamma_i'(0).
\]
The left-hand side thus lies in the range of $[\nabla q_1(x)\ \cdots\ \nabla q_m(x)]^\top$ for each $x\in M$. Since linear independence is an open condition, \ref{G1} implies that the latter matrix has full rank for all $x$ in a whole neighborhood $U$ of $M$. It thus has a Moore-Penrose inverse which is a continuous function of~$x$; see \citet[page~408]{Penrose:1955}. The desired map $c$ is now obtained on $U$ by
\[
c(x) = - \frac{1}{2}\begin{pmatrix}
\nabla q_1(x)^\top \\
\vdots \\
\nabla q_m(x)^\top
\end{pmatrix}^{-1}
\begin{pmatrix}
\tr\left((\widehat a(x)- a(x))\,\nabla^2 q_1(x) \right) \\
\vdots \\
\tr\left((\widehat a(x)- a(x))\,\nabla^2 q_m(x) \right)
\end{pmatrix},
\]
where the Moore-Penrose inverse is understood. Finally, after shrinking $U$ while maintaining $M\subseteq U$, $c$ is continuous on the closure $\overline U$, and can then be extended to a continuous map on $\R^d$ by the Tietze extension theorem; see \citet[Theorem~15.8]{Willard:2004}. This proves \eqref{cprop}.

The extended drift coefficient is now defined by $\widehat b = b + c$, and the operator $\widehat\Gcal$ by
\[
\widehat\Gcal f = \frac{1}{2}\tr( \widehat a\, \nabla^2 f) + \widehat b\,^{\top} \nabla f.
\]
In view of \eqref{cprop} it satisfies $\widehat\Gcal f=\Gcal f$ on $E$ and
\[
 \widehat \Gcal q = \Gcal q + \frac{1}{2}\tr\left( (\widehat a- a)\,\nabla^2 q \right) + c^\top \nabla q = 0
\]
on $M$ for all $q\in\Qcal$, as desired. 
\end{proof}

We now define the set
\[
E_0 = M \cap \{\|\widehat b-b\|<1\}.
\]
Note that $E\subseteq E_0$ since $\widehat b=b$ on $E$. Furthermore, the linear growth condition
\begin{equation} \label{eq:linear growth}
\|\widehat a(x)\|^{1/2} + \|\widehat b(x)\| \le \|a(x)\|^{1/2} + \|b(x)\| + 1 \le C(1+\|x\|),\qquad x\in E_0,
\end{equation}
is satisfied for some constant $C$. This uses that the component functions of $a$ and $b$ lie in $\Pol_2(\R^d)$ and $\Pol_1(\R^d)$, respectively.

An $E_0$-valued local solution to~\eqref{eq:SDE}, with $b$ and $\sigma$ replaced by $\widehat b$ and $\widehat \sigma$, can now be constructed by solving the martingale problem for the operator $\widehat \Gcal$ and state space~$E_0$. We first prove an auxiliary lemma.

\begin{lemma} \label{L:Ghatwd}
Let $f\in C^\infty(\R^d)$ and assume that the support $K$ of $f$ satisfies $K\cap M\subseteq E_0$. Let $x_0$ be a maximizer of $f$ over $E_0$. Then $\widehat \Gcal f(x_0)\le 0$.
\end{lemma}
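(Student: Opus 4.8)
The plan is to upgrade the hypothesis ``$x_0$ maximizes $f$ over $E_0$'' to the statement that $x_0$ is a local maximizer of $f$ along the submanifold $M$, and then to exploit that both $\widehat b$ and $\widehat a$ are adapted to $M$ at $x_0$, so that $\widehat\Gcal f(x_0)$ collapses to $\tfrac12\tr(\widehat a(x_0)\nabla^2 f(x_0))$ evaluated in directions tangent to $M$, where the second-derivative test applies. First I would observe that $E_0=M\cap\{\|\widehat b-b\|<1\}$ is open in $M$ and that $x_0\in E_0\subseteq M$; hence $f\le f(x_0)$ on an $M$-open neighbourhood of $x_0$, i.e.\ $x_0$ is a local maximizer of $f|_M$. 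By~\ref{G1}, $M$ is a smooth submanifold near $x_0$ with $T_{x_0}M=\bigcap_{q\in\Qcal}\ker\nabla q(x_0)^\top$ and conormal space spanned by the $\nabla q(x_0)$, so the first-order condition along $M$ gives $\nabla f(x_0)=\sum_{q\in\Qcal}\mu_q\nabla q(x_0)$ for some Lagrange multipliers $\mu_q$. (The hypothesis on the support $K$ is not needed for the estimate itself; it only guarantees that a maximizer of $f$ over $E_0$ exists, and when $x_0$ lies outside $K$ one may take all $\mu_q=0$ and the argument degenerates harmlessly.)

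Next I would pass to $\widetilde f=f-\sum_{q}\mu_q q$. Since each $q$ vanishes on $M$, we have $\widetilde f=f$ on $M$, so $x_0$ is still a local maximizer of $\widetilde f|_M$, while $\nabla\widetilde f(x_0)=0$ by construction. Because $\widehat\Gcal q=0$ on $M$ by Lemma~\ref{L:driftcorr} and $x_0\in M$, linearity gives
\[
\widehat\Gcal f(x_0)=\widehat\Gcal\widetilde f(x_0)+\sum_{q\in\Qcal}\mu_q\,\widehat\Gcal q(x_0)=\widehat\Gcal\widetilde f(x_0)=\tfrac12\tr\big(\widehat a(x_0)\nabla^2\widetilde f(x_0)\big),
\]
the last step using $\widehat b(x_0)^\top\nabla\widetilde f(x_0)=0$. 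It then remains to show this trace is $\le0$. On one hand, \ref{T:existence:q} yields $a(x_0)\nabla q(x_0)=0$ for every $q\in\Qcal$; on the other, Lemma~\ref{L:proj} shows $\widehat a(x_0)=\pi(a(x_0))$ shares an orthonormal eigenbasis $\{S_i\}$ with $a(x_0)$, its eigenvalues being the positive parts $\lambda_i^+$ of those of $a(x_0)$, whence $\ker a(x_0)\subseteq\ker\widehat a(x_0)$, so $\widehat a(x_0)\nabla q(x_0)=0$ and $\mathrm{range}\,\widehat a(x_0)\subseteq T_{x_0}M$. Writing $\widehat a(x_0)=\sum_i\lambda_i^+S_iS_i^\top$, each $S_i$ with $\lambda_i^+>0$ lies in $T_{x_0}M$, so $\tr(\widehat a(x_0)\nabla^2\widetilde f(x_0))=\sum_i\lambda_i^+\,S_i^\top\nabla^2\widetilde f(x_0)S_i$; and taking a local parametrization $\psi$ of $M$ at $x_0$ and using $\nabla\widetilde f(x_0)=0$, the Hessian of $\widetilde f\circ\psi$ at its maximizer equals $D\psi^\top\nabla^2\widetilde f(x_0)D\psi$ and is negative semidefinite, hence $v^\top\nabla^2\widetilde f(x_0)v\le0$ for all $v\in T_{x_0}M$, in particular for each such $S_i$. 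Therefore $\widehat\Gcal f(x_0)\le0$.

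I expect the only real obstacle to be organizing the ``tangency'' bookkeeping correctly: one must check simultaneously that $\widehat a(x_0)$ annihilates the conormal directions of $M$ (so the trace sees only the intrinsic Hessian along $M$), that the drift correction of Lemma~\ref{L:driftcorr} makes $\widehat\Gcal q$ vanish on $M$ (so the multiplier terms drop out), and that the constrained first-order condition allows one to replace $f$ by $\widetilde f$ with vanishing \emph{full} gradient. Once these three ingredients are aligned---and they rest only on \ref{G1}, \ref{T:existence:q}, Lemma~\ref{L:proj}, and Lemma~\ref{L:driftcorr}---the rest is the standard second-derivative test on a manifold.
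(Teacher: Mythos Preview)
Your proof is correct and follows essentially the same route as the paper: both arguments use the constrained first-order condition at $x_0$ to write $\nabla f(x_0)$ in the span of the $\nabla q(x_0)$, use \ref{T:existence:q} together with Lemma~\ref{L:proj} to see that $\widehat a(x_0)$ acts only in $T_{x_0}M$, invoke the second-order optimality condition on $M$ to bound the trace, and use Lemma~\ref{L:driftcorr} to kill the $\widehat\Gcal q$ contributions. The only difference is organizational: you subtract $\sum_q\mu_q q$ up front to obtain $\widetilde f$ with $\nabla\widetilde f(x_0)=0$, which makes the curvature terms in the intrinsic Hessian vanish automatically, whereas the paper works directly with curves $\gamma$ in $M$ and cancels the $\gamma''(0)$ terms by subtracting the corresponding identities for the $q$'s; these are two packagings of the same computation.
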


\begin{proof}
Let $\gamma:(-1,1)\to M$ be any smooth curve in $M$ with $\gamma(0)=x_0$. Optimality of $x_0$ and the chain rule yield
\[
0 = \frac{\d}{\d s} (f \circ \gamma)(0) = \nabla f(x_0)^\top \gamma'(0),
\]
from which it follows that $\nabla f(x_0)$ is orthogonal to the tangent space of $M$ at $x_0$. Thus
\begin{equation} \label{eq:max1}
\nabla f(x_0)=\sum_{q\in\Qcal} c_q \nabla q(x_0)
\end{equation}
for some coefficients $c_q$. Next, differentiating once more yields
\[
0 \ge \frac{\d^2}{\d s^2} (f \circ \gamma)(0) = \tr\left( \nabla^2 f(x_0)  \gamma'(0) \gamma'(0)^\top \right) + \nabla f(x_0)^\top \gamma''(0).
\]
Similarly, for any $q\in\Qcal$,
\[
0 = \frac{\d^2}{\d s^2} (q \circ \gamma)(0) = \tr\left( \nabla^2 q(x_0)  \gamma'(0) \gamma'(0)^\top \right) + \nabla q(x_0)^\top \gamma''(0).
\]
In view of~\eqref{eq:max1}, this implies
\begin{equation} \label{eq:max2}
\tr\left( \Big(\nabla^2 f(x_0) - \sum_{q\in\Qcal} c_q \nabla^2 q(x_0)\Big) \, \gamma'(0) \gamma'(0)^\top \right) \le 0.
\end{equation}
Observe that Lemma~\ref{L:proj} implies that $\ker A\subseteq\ker \pi(A)$ for any symmetric matrix $A$. Thus $\widehat a(x_0)\nabla q(x_0)=0$ for all $q\in\Qcal$ by \ref{T:existence:q}, which implies that $\widehat a(x_0)=\sum_i u_i u_i^\top$ for some vectors $u_i$ in the tangent space of $M$ at $x_0$. Thus, choosing curves $\gamma$ with $\gamma'(0)=u_i$, \eqref{eq:max2} yields
\begin{equation} \label{eq:max3}
\tr\left( \Big(\nabla^2 f(x_0) - \sum_{q\in\Qcal} c_q \nabla^2 q(x_0)\Big) \, \widehat a(x_0) \right) \le 0.
\end{equation}
Combining~\eqref{eq:max1}, \eqref{eq:max3}, and Lemma~\ref{L:driftcorr} we obtain
\[
\widehat\Gcal f(x_0) = \frac{1}{2} \tr\left( \widehat a(x_0)\, \nabla^2 f(x_0) \right) + \widehat b(x_0)^\top \nabla f(x_0)  \le \sum_{q\in\Qcal} c_q\, \widehat\Gcal q(x_0)=0,
\]
as desired. 
\end{proof}

Let $C_0(E_0)$ denote the space of continuous functions on $E_0$ vanishing at infinity. Lemma~\ref{L:Ghatwd} implies that $\widehat \Gcal$ is a well-defined linear operator on $C_0(E_0)$ with domain $C^\infty_c(E_0)$. It also implies that $\widehat \Gcal$ satisfies the positive maximum principle as a linear operator on $C_0(E_0)$. Hence the following local existence result can be proved.

\begin{lemma} \label{L:existence}
Let $\mu$ be a probability measure on $E$. There exists an $\R^d$-valued c\`adl\`ag process $X$ with initial distribution~$\mu$ that satisfies
\begin{equation} \label{eq:SDEhat}
X_t = X_0 + \int_0^t \widehat b(X_s)\,\d s + \int_0^t \widehat \sigma(X_s)\,\d W_s
\end{equation}
for all $t<\tau$, where $\tau = \inf\{t \ge 0: X_t \notin E_0\}>0$, and some $d$-dimensional Brownian motion~$W$.
\end{lemma}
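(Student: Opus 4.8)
The plan is to realize $X$ as a (stopped) solution of the martingale problem for $\widehat\Gcal$ on the state space $E_0$, and then to convert this into a weak solution of~\eqref{eq:SDEhat}.

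First I would record that $\widehat\Gcal$ is a well-defined linear operator on $C_0(E_0)$ with domain $C_c^\infty(E_0)$. Since $E_0$ is a smooth submanifold of $\R^d$ by~\ref{G1}, this amounts to checking that $\widehat\Gcal f=0$ on $M$ whenever $f\in C^\infty(\R^d)$ vanishes on $M$; but at $x\in M$ one has $\nabla f(x)\in\mathrm{span}\{\nabla q(x):q\in\Qcal\}$, and combining this with $\widehat a(x)\nabla q(x)=0$, Lemma~\ref{L:proj} (so that $\widehat a(x)=\sum_i u_iu_i^\top$ with the $u_i$ tangent to $M$), and $\widehat\Gcal q=0$ on $M$ gives $\widehat\Gcal f(x)=\sum_q c_q\widehat\Gcal q(x)=0$ — exactly the computation already performed inside the proof of Lemma~\ref{L:Ghatwd}. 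Hence $\widehat\Gcal$ descends to an operator on $C_0(E_0)$ with dense domain $C_c^\infty(E_0)$, mapping into $C_c(E_0)\subseteq C_0(E_0)$, and Lemma~\ref{L:Ghatwd} asserts precisely that it satisfies the positive maximum principle. Standard existence theory for martingale problems under the positive maximum principle (see, e.g., \citet[Theorem~4.5.4]{Ethier/Kurtz:1986}) then yields, for the given initial law $\mu$ on $E\subseteq E_0$, a càdlàg process $X$ with values in the one-point compactification $E_0^\Delta$, absorbed at $\Delta$, solving the $(\widehat\Gcal,\mu)$-martingale problem.

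Next I would establish the properties of $X$ and of $\tau=\inf\{t\ge0:X_t\notin E_0\}$. Positivity of $\tau$ is immediate: $E_0$ is relatively open in $M$ and $X_0\in E\subseteq E_0$ (because $\widehat b=b$, hence $\|\widehat b-b\|<1$, on $E$), so right-continuity forces $X_t\in E_0$ for all small $t$. Continuity of $X$ on $[0,\tau)$ is the classical fact that a solution of the martingale problem for a second-order differential operator with continuous coefficients has continuous sample paths; after passing to local charts of $M$ and localizing so the coefficients are bounded, this reduces to the $\R^k$ statement (cf. \citet{Stroock/Varadhan:1979}), and is where one genuinely uses that $\widehat\Gcal$ has no jump part. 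The linear growth estimate~\eqref{eq:linear growth} then prevents explosion before $\tau$, so on $\{\tau<\infty\}$ the limit $X_{\tau-}$ exists in $M\subseteq\R^d$; redefining $X$ to be constant (or arbitrary) on $[\tau,\infty)$ makes it an $\R^d$-valued càdlàg process, which is all the statement requires there. To extract the SDE, exhaust $E_0$ by relatively compact open sets $D_n\uparrow E_0$ with $\overline{D_n}\subseteq D_{n+1}$, set $\sigma_n=\inf\{t:X_t\notin D_n\}\uparrow\tau$, and for each coordinate $i$ pick $f_n^i\in C_c^\infty(E_0)$ equal to $x\mapsto x_i$ on $\overline{D_n}$. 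Since $\widehat\Gcal$ is local and $\widehat\Gcal(x\mapsto x_i)=\widehat b_i$, the martingale property of $f_n^i(X)-f_n^i(X_0)-\int_0^\cdot\widehat\Gcal f_n^i(X_s)\,\d s$ shows, on letting $n\to\infty$, that $N^i_t:=X^i_t-X^i_0-\int_0^t\widehat b_i(X_s)\,\d s$ is a continuous local martingale on $[0,\tau)$; testing against the products $f_n^if_n^j$ and using $\widehat\Gcal(fg)=f\widehat\Gcal g+g\widehat\Gcal f+\nabla f^\top\widehat a\,\nabla g$ together with Itô's formula gives $\langle N^i,N^j\rangle_t=\int_0^t\widehat a_{ij}(X_s)\,\d s$. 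A standard martingale representation theorem for continuous local martingales with absolutely continuous covariation — enlarging the probability space if necessary to accommodate the possible degeneracy of $\widehat\sigma=\widehat a^{1/2}$ (see, e.g., \citet{Ikeda/Watanabe:1981}) — then furnishes a $d$-dimensional Brownian motion $W$ with $N_t=\int_0^t\widehat\sigma(X_s)\,\d W_s$, i.e.~\eqref{eq:SDEhat} holds for all $t<\tau$.

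I expect the main obstacle to be the path-continuity of the martingale-problem solution on $[0,\tau)$: this is the one step where the ``local operator, no jump part'' structure must be used carefully, and where working on the submanifold $M$ rather than on $\R^d$ adds some bookkeeping with charts and localization. The remaining ingredients — well-definedness of $\widehat\Gcal$ on $C_0(E_0)$, positivity of $\tau$, non-explosion via~\eqref{eq:linear growth}, and the passage from the martingale problem to the SDE — are routine given Lemmas~\ref{L:proj}--\ref{L:Ghatwd}.
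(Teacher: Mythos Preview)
Your proposal is correct and follows the same route as the paper: solve the martingale problem on $E_0^\Delta$ via \citet[Theorem~4.5.4]{Ethier/Kurtz:2005}, establish path continuity, pass to the SDE by a localized martingale representation (the paper cites \citet[Theorem~5.3.3]{Ethier/Kurtz:2005} for this), and rule out explosion using the linear growth bound~\eqref{eq:linear growth}. The only noteworthy difference concerns the step you rightly flag as the obstacle: rather than passing to local charts of $M$ and invoking Stroock--Varadhan, the paper handles continuity in one line by citing \citet[Proposition~2]{Bakry/Emery:1985}, which shows directly that $f(X)$ and $N^f$ are continuous for any solution of the martingale problem for a diffusion-type generator, without requiring $\widehat\Gcal$ to preserve $C^\infty_c(E_0)$---this sidesteps all the chart bookkeeping you anticipate.
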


\begin{proof}
The conditions of \citet[Theorem~4.5.4]{Ethier/Kurtz:2005} are satisfied, so there exists an $E_0^\Delta$-valued c\`adl\`ag process $X$ such that $N^f_t = f(X_t) - f(X_0) - \int_0^t \widehat \Gcal f(X_s)\, \d s$ is a martingale for any $f\in C^\infty_c(E_0)$. Here $E_0^\Delta$ denotes the one-point compactification of~$E_0$ with some $\Delta\notin E_0$, and we set $f(\Delta)=\widehat \Gcal f(\Delta)=0$. \citet[Proposition~2]{Bakry/Emery:1985} then yields that $f(X)$ and $N^f$ are continuous.\footnote{Note that, unlike many other results in that paper, Proposition~2 in \citet{Bakry/Emery:1985} does not require $\widehat \Gcal$ to leave $C^\infty_c(E_0)$ invariant, and is thus applicable in our setting.} In particular, $X$ cannot jump to $\Delta$ from any point in $E_0$, whence $\tau$ is a strictly positive predictable time.

A localized version of the argument in \citet[Theorem~5.3.3]{Ethier/Kurtz:2005} now shows that, on an extended probability space, $X$ satisfies~\eqref{eq:SDEhat} for all $t<\tau$ and some Brownian motion~$W$. It remains to show that $X$ is non-explosive in the sense that $\sup_{t<\tau}\|X_\tau\|<\infty$ on $\{\tau<\infty\}$. Indeed, this implies that either $\tau=\infty$, or $\R^d\setminus E_0\neq\emptyset$ in which case we can take $\Delta\in\R^d\setminus E_0$. In either case, $X$ is $\R^d$-valued. To prove that $X$ is non-explosive, let $Z_t=1+\|X_t\|^2$ for $t<\tau$, and observe that the linear growth condition~\eqref{eq:linear growth} in conjunction with It\^o's formula yields $Z_t \le Z_0 + C\int_0^t Z_s\d s + N_t$ for all $t<\tau$, where $C>0$ is a constant and $N$ a local martingale on $[0,\tau)$. Let $Y_t$ denote the right-hand side. Then
\begin{align*}
e^{-tC}Z_t\le e^{-tC}Y_t &= Z_0+C \int_0^t e^{-sC}(Z_s-Y_s)\d s + \int_0^t e^{-sC} \d N_s \\
&\le Z_0 + \int_0^t e^{-s C}\d N_s
\end{align*}
for all $t<\tau$. The right-hand side is a nonnegative supermartingale on $[0,\tau)$, and we deduce $\sup_{t<\tau}Z_t<\infty$ on $\{\tau<\infty\}$, as required.
\end{proof}

Let $X$ and $\tau$ be the process and stopping time provided by Lemma~\ref{L:existence}. We now show that $\tau=\infty$ and that $X_t$ remains in $E$ for all $t\ge0$ and spends zero time in each of the sets $\{p=0\}$, $p\in\Pcal$. This will complete the the proof of Theorem~\ref{T:existence}, since $\widehat a$ and $\widehat b$ coincide with $a$ and $b$ on $E$.

We need to prove that $p(X_t)\ge 0$ for all $0\le t<\tau$ and all $p\in\Pcal$. Fix $p\in\Pcal$ and let $L^y$ denote the local time of $p(X)$ at level~$y$, where we choose a modification that is c\`adl\`ag in~$y$; see \citet[Theorem~VI.1.7]{Revuz/Yor:1999}. It\^o's formula yields
\[
p(X_t) = p(x) + \int_0^t \widehat \Gcal p(X_s) \d s + \int_0^t \nabla p(X_s)^\top \widehat \sigma(X_s)^{1/2}\d W_s, \qquad t<\tau.
\]
We first claim that $L^0_t=0$ for $t<\tau$. The occupation density formula \cite[Corollary~VI.1.6]{Revuz/Yor:1999} yields
\[
\int_{-\infty}^\infty \frac{1}{y}\1{y>0}L^y_t\d y =  \int_0^t \frac{\nabla p^\top \widehat a\,\nabla p(X_s)}{p(X_s)}\1{p(X_s)>0}\d s.
\]
By right continuity of $L^y_t$ in $y$ it suffices to show that the right-hand side is finite. For this, in turn, it is enough to prove that $(\nabla p^\top \widehat a\,\nabla p)/p$ is locally bounded on $M$. To this end, let $a=S\Lambda S^\top$ be the spectral decomposition of $a$, so that the columns $S_i$ of $S$ constitute an orthonormal basis of eigenvectors of $a$, and the diagonal elements $\lambda_i$ of $\Lambda$ are the corresponding eigenvalues. Note that these quantities depend on~$x$ in general. Since $a\,\nabla p=0$ on $M\cap\{p=0\}$ by \ref{T:existence:p}, condition~{\ref{G2}} implies that there exists a vector $h=(h_1,\ldots,h_d)^\top$ of polynomials such that
\[
a\,\nabla p = h\, p \quad \text{on}\quad M.
\]
Thus $\lambda_i S_i^\top\nabla p = S_i^\top a\, \nabla p = S_i^\top h\, p$, and hence $\lambda_i(S_i^\top\nabla p)^2 = S_i^\top\nabla p\, S_i^\top h\, p$. In conjunction with Lemma~\ref{L:proj} this yields
\begin{align*}
\nabla p^\top \widehat a\,\nabla p
&= \nabla p^\top S\Lambda^+ S^\top\nabla p
= \sum_i \lambda_i\1{\lambda_i>0}(S_i^\top\nabla p)^2 \\
&= \sum_i \1{\lambda_i>0}S_i^\top\nabla p\, S_i^\top h\, p.
\end{align*}
Consequently,
\[
\nabla p^\top \widehat a\,\nabla p \le |p| \sum_i \|S_i\|^2 \|\nabla p\| \, \|h\|.
\]
Since $\|S_i\|=1$, and $\nabla p$ and $h$ are locally bounded, we deduce that $(\nabla p^\top \widehat a\,\nabla p)/p$ is locally bounded, as required. Thus $L^0=0$ as claimed.

Next, since $\widehat \Gcal p= \Gcal p$ on $E$, the hypothesis \ref{T:existence:p} implies that $\widehat \Gcal p>0$ on a neighborhood $U_p$ of $E\cap\{p=0\}$. Shrinking $E_0$ if necessary, we may assume that $E_0\subseteq E\cup \bigcup_{p\in\Pcal} U_p$ and thus
\[
\text{$\widehat\Gcal p > 0$ on $E_0\cap\{p=0\}$}.
\]
Since $L^0=0$ before $\tau$, Lemma~\ref{L:nonneg} implies
\[
\text{$p(X_t)\ge 0$ for all $t<\tau$.}
\]
Thus the stopping time $\tau_E=\inf\{t\colon X_t\notin E\}\le\tau$ actually satisfies $\tau_E=\tau$. This implies $\tau=\infty$. Indeed, $X$ has left limits on $\{\tau<\infty\}$ by Lemma~\ref{L:existence}, and $E_0$ is a neighborhood in $M$ of the closed set $E$. Thus $\tau_E<\tau$ on $\{\tau<\infty\}$, whence this set is empty. Finally, Lemma~\ref{L:nonneg} also gives $\int_0^t\1{p(X_s)=0}\d s=0$. The proof of Theorem~\ref{T:existence} is complete.

\section{Proof of Theorem~\ref{T:boundary}} \label{appT:bdry}

The proof of Theorem~\ref{T:boundary} is divided into three parts.

\paragraph{Proof of Theorem~\ref{T:boundary}\ref{T:boundary:1}}
The following argument is a version of what is sometimes called ``McKean's argument''; see \citet[Section~4.1]{Mayerhofer:2011kx} for an overview and further references. Suppose first $p(X_0)>0$ almost surely. It\^o's formula and the identity $a\,\nabla h=h\, p$ on $M$ yield
\begin{equation}\label{eq:log pX}
 \begin{aligned}
\log\, & p(X_t) - \log p(X_0) \\
&= \int_0^t \left(\frac{\Gcal p(X_s)}{p(X_s)} - \frac{1}{2}\frac{\nabla p^\top a\,\nabla p(X_s)}{p(X_s)^2}\right) \d s + \int_0^t \frac{\nabla p^\top \sigma(X_s)}{p(X_s)}\d W_s   \\
&= \int_0^t \frac{2\,\Gcal p(X_s) - h^\top\nabla p(X_s)}{2p(X_s)} \d s + \int_0^t \frac{\nabla p^\top \sigma(X_s)}{p(X_s)}\d W_s
\end{aligned}
\end{equation}
for $t<\tau=\inf\{s\ge0:p(X_s)=0\}$. We will modify $\log p(X)$ to turn it into a local submartingale. To this end, define
\[
V_t = \int_0^t \1{X_s\notin U} \frac{1}{p(X_s)}\left|2\,\Gcal p(X_s) - h^\top\nabla p(X_s)\right| \d s.
\]
We claim that $V_t<\infty$ for all $t\ge 0$. To see this, note that the set $E\cap U^c \cap\{x:\|x\|\le n\}$ is compact and disjoint from $\{p=0\}\cap E$ for each $n$. Thus $\varepsilon_n=\min\{p(x):x\in E\cap U^c,\ \|x\|\le n\}$ is strictly positive. Defining $\sigma_n=\inf\{t:\|X_t\|\ge n\}$, this yields
\[
V_{t\wedge\sigma_n} \le \frac{t}{2\varepsilon_n} \, \max_{\|x\|\le n} \left|2\,\Gcal p(x) - h^\top\nabla p(x)\right| < \infty.
\]
Since $\sigma_n\to\infty$ due to the fact that $X$ does not explode, we have $V_t<\infty$ for all $t\ge0$ as claimed. It follows that the process
\[
A_t = \int_0^t \1{X_s\notin U} \frac{1}{p(X_s)}\left(2\,\Gcal p(X_s) - h^\top\nabla p(X_s)\right) \d s
\]
is well-defined and finite for all $t\ge 0$, with total variation process $V$.

Now, define stopping times $\rho_n=\inf\{t\ge 0: |A_t|+p(X_t) \ge n\}$ and note that $\rho_n\to\infty$ since neither $A$ nor $X$ explodes. Consider the process $Z = \log p(X) - A$, which satisfies
\begin{align*}
Z_t = \log p(X_0) &+ \int_0^t \1{X_s\in U}  \frac{1}{2p(X_s)}\left(2\,\Gcal p(X_s) - h^\top\nabla p(X_s)\right) \d s \\
&+ \int_0^t \frac{\nabla p^\top \sigma(X_s)}{p(X_s)}\d W_s.
\end{align*}
Then $-Z^{\rho_n}$ is a supermartingale on the stochastic interval $[0,\tau)$, bounded from below.\footnote{Details regarding stochastic calculus on stochastic intervals are available in \citet{Maisonneuve:1977}; see also \citet{Mayerhofer:2011kx,Carr/Fisher/Ruf:2014,Larsson/Ruf:2014}.} Thus by the supermartingale convergence theorem, $\lim_{t\uparrow\tau}Z_{t\wedge\rho_n}$ exists in $\R$, which implies $\tau\ge\rho_n$. Since $\rho_n\to\infty$, we deduce $\tau=\infty$, as desired.

Finally, suppose $\P(p(X_0)=0)>0$. The above proof shows that $p(X)$ cannot return to zero once it becomes positive. But due to~\eqref{T:existence:zt} we have $p(X_t)>0$ for arbitrarily small $t>0$, and this completes the proof.

\paragraph{Proof of Theorem~\ref{T:boundary}\ref{T:boundary:2}}

As in the proof of~\ref{T:boundary:1} it suffices to consider the case $p(X_0)>0$. By \ref{G2}  we deduce $2\,\Gcal p - h^\top \nabla p = \alpha p$ on $M$ for some $\alpha\in\Pol(\R^d)$. However, we have $\deg\Gcal p\le\deg p$ and $\deg a\nabla p \le 1+\deg p$, which yields $\deg h\le 1$. Consequently $\deg \alpha p \le \deg p$, implying that $\alpha$ is constant. Inserting this into~\eqref{eq:log pX} yields
\[
\log p(X_t) = \log p(X_0) + \frac{\alpha}{2}t + \int_0^t \frac{\nabla p^\top \sigma(X_s)}{p(X_s)}\d W_s
\]
for $t<\tau=\inf\{t: p(X_t)=0\}$. The process $\log p(X_t)-\alpha t/2$ is thus locally a martingale bounded from above, and hence nonexplosive by the same ``McKean's argument'' as in the proof of part~\ref{T:boundary:1}. This proves the result.

\paragraph{Proof of Theorem~\ref{T:boundary}\ref{T:boundary:3}}

The proof of relies on the following two lemmas.

\begin{lemma} \label{L:exitprob}
Let $b:\R^d\to\R^d$ and $\sigma:\R^d\to\R^{d\times d}$ be continuous functions with $\|b(x)\|^2+\|\sigma(x)\|^2\le\kappa(1+\|x\|^2)$ for some $\kappa>0$, and fix $\rho>0$. Let $Y$ be a $d$-dimensional It\^o process satisfying $Y_t = Y_0 + \int_0^t b(Y_s)\d s + \int_0^t \sigma(Y_s)\d W_s$. Then there exist constants $c_1,c_2>0$ that only depend on $\kappa$ and $\rho$, but not on $Y_0$, such that
\[
\P( \sup_{s\le t}\|Y_s-Y_0\| < \rho )\  \ge \ 1 - t\, c_1\, (1+\E[\|Y_0\|^2]), \qquad t\le c_2.
\]
\end{lemma}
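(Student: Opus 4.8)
The plan is to estimate the complementary probability $\P(\sup_{s\le t}\|Y_s-Y_0\|\ge\rho)$ by controlling the second moment of the running maximum $M_t:=\sup_{s\le t}\|Y_s-Y_0\|$ and then invoking Markov's inequality. We may assume $\E[\|Y_0\|^2]<\infty$, since otherwise the asserted inequality is vacuous. The goal of the main estimate is to show that there is a constant $C(\kappa)$ such that $\E[M_t^2]\le C(\kappa)(1+\E[\|Y_0\|^2])\,t$ for $t\le 1$.

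To obtain this, I would first localize. Set $\tau_R=\inf\{t:\|Y_t-Y_0\|\ge R\}$ and put $f_R(t)=\E[\,(R\wedge M_t)^2\,]=\E[\sup_{s\le t}\|Y_{s\wedge\tau_R}-Y_0\|^2]$, which is finite (bounded by $R^2$) by continuity of $Y$. Decomposing $Y_{s\wedge\tau_R}-Y_0$ into its drift and stochastic integral parts, applying the Cauchy--Schwarz inequality to the drift integral and Doob's $L^2$ maximal inequality (equivalently the Burkholder--Davis--Gundy inequality) to the martingale part, yields
\[
f_R(t)\ \le\ 2t\,\E\!\left[\int_0^{t\wedge\tau_R}\|b(Y_u)\|^2\,\d u\right]\ +\ 8\,\E\!\left[\int_0^{t\wedge\tau_R}\|\sigma(Y_u)\|^2\,\d u\right].
\]
Using the linear growth hypothesis together with $\|Y_u\|^2\le 2\|Y_0\|^2+2\|Y_u-Y_0\|^2$ bounds the integrand by $\kappa(1+2\|Y_0\|^2+2\|Y_u-Y_0\|^2)$, and on $\{u<\tau_R\}$ we have $\|Y_u-Y_0\|^2\le\sup_{r\le u}\|Y_{r\wedge\tau_R}-Y_0\|^2$. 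Restricting to $t\le1$ so that $2t\le 8$, this gives the integral inequality
\[
f_R(t)\ \le\ 8\kappa\,t\,(1+2\E[\|Y_0\|^2])\ +\ 16\kappa\int_0^t f_R(u)\,\d u,\qquad t\le1,
\]
and since $f_R$ is bounded, Gronwall's inequality yields $f_R(t)\le 8\kappa\,(1+2\E[\|Y_0\|^2])\,e^{16\kappa t}\,t$. This bound does not depend on $R$, so letting $R\to\infty$ (note $\tau_R\to\infty$ a.s.\ by continuity of $Y$, hence $R\wedge M_t\uparrow M_t$) and applying monotone convergence gives $\E[M_t^2]\le C(\kappa)(1+\E[\|Y_0\|^2])\,t$ for $t\le1$, with, say, $C(\kappa)=16\kappa e^{16\kappa}$.

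Finally, Markov's inequality gives $\P(M_t\ge\rho)=\P(M_t^2\ge\rho^2)\le\rho^{-2}\E[M_t^2]\le \rho^{-2}C(\kappa)(1+\E[\|Y_0\|^2])\,t$, so the claim holds with $c_1=C(\kappa)/\rho^2$ and $c_2=1$, both depending only on $\kappa$ and $\rho$. The one delicate point is the localization: one must pass through $\tau_R$ to ensure $f_R(t)<\infty$ before invoking Gronwall's inequality, and then verify that the resulting bound is uniform in $R$ so that the limit $R\to\infty$ can be taken; the remaining steps are routine applications of the standard martingale inequalities and bookkeeping of the constants.
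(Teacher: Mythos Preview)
Your proof is correct and follows essentially the same approach as the paper: Markov's inequality reduces the claim to a second-moment bound on $\sup_{s\le t}\|Y_s-Y_0\|$, which is obtained by localizing, applying Cauchy--Schwarz/BDG to the drift and martingale parts, invoking the growth bound, and closing with Gronwall. The only cosmetic differences are that the paper localizes via $\tau_n=\inf\{t:\|Y_t\|\ge n\}$ and takes $c_2$ to be the BDG constant, whereas you localize on $\|Y_t-Y_0\|$ and set $c_2=1$; and the paper passes to the limit with Fatou's lemma while you use monotone convergence.
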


\begin{proof}
By Markov's inequality, $\P( \sup_{t\le\varepsilon}\|Y_t-Y_0\| < \rho )\ge 1-\rho^{-2}\E[\sup_{t\le\varepsilon}\|Y_t-Y_0\|^2]$. Let $\tau_n$ be the first time $\|Y_t\|$ reaches level $n$. A standard argument using the BDG inequality and Jensen's inequality yields
\[
\E\left[ \sup_{s\le t\wedge \tau_n}\|Y_s-Y_0\|^2\right] \le 2c_2\,\E\left[\int_0^{t\wedge\tau_n}\left( \|\sigma(Y_s)\|^2 + \|b(Y_s)\|^2\right)\d s \right]
\]
for $t\le c_2$, where $c_2$ is the constant in the BDG inequality. The growth condition yields
\begin{align*}
\E\left[ \sup_{s\le t\wedge \tau_n}\|Y_s-Y_0\|^2\right]
&\le 2c_2\kappa\,\E\left[\int_0^{t\wedge\tau_n}\left( 1 + \|Y_s\|^2 \right)\d s \right] \\
&\le 4c_2\kappa\,(1+\E[\|Y_0\|^2])t \\
&\qquad + 4c_2\kappa\,\int_0^t\E\left[\sup_{u\le s\wedge\tau_n}\|Y_u-Y_0\|^2 \right]\d s,
\end{align*}
for $t\le c_2$, and Gronwall's lemma then gives $\E[ \sup_{s\le t\wedge \tau_n}\|Y_s-Y_0\|^2] \le c_3t\,\e^{4c_2\kappa t}$, where $c_3=4c_2\kappa\,(1+\E[\|Y_0\|^2])$. Sending $n$ to infinity and applying Fatou's lemma concludes the proof, upon setting $c_1=4c_2\kappa\,\e^{4c_2^2\kappa}\wedge c_2$. 
\end{proof}

\begin{lemma} \label{L:Bessel}
Let $0<\alpha<2$ and $z\ge0$, and let $Z$ be a ${\rm BESQ}(\alpha)$ process starting from $z\ge 0$. Let $\P_z$ denoting its law. Let $\tau_0=\inf\{t\ge0:Z_t=0\}$ be the first time $Z$ hits zero. Then, for any $\varepsilon>0$,
\[
\lim_{z\to0}\P_z(\tau_0>\varepsilon) = 0.
\]
\end{lemma}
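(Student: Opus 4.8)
The plan is to combine the Brownian scaling property of squared Bessel processes with the classical fact that a ${\rm BESQ}(\alpha)$ process with $0<\alpha<2$ reaches the origin in finite time almost surely. Fix $\varepsilon>0$; the value $z=0$ is trivial since then $\tau_0=0$, so only the behaviour as $z\downarrow0$ matters.

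\emph{Step 1: Scaling.} Recall that ${\rm BESQ}(\alpha)$ is the pathwise-unique solution of $\d Z_t=\alpha\,\d t+2\sqrt{Z_t}\,\d W_t$ (pathwise uniqueness holds because the dispersion $z\mapsto 2\sqrt z$ is $1/2$-H\"older). I would first check that if $Z$ is ${\rm BESQ}(\alpha)$ started from $z>0$, then $\widehat Z_t:=z^{-1}Z_{zt}$ is again ${\rm BESQ}(\alpha)$, now started from $1$: a direct computation gives $\d\widehat Z_t=\alpha\,\d t+2\sqrt{\widehat Z_t}\,\d\widehat W_t$ with $\widehat W_t:=z^{-1/2}W_{zt}$ a Brownian motion, and the claim follows from uniqueness in law. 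Since the first hitting time of $0$ scales as $\widehat\tau_0=z^{-1}\tau_0$, this yields the identity $\P_z(\tau_0>\varepsilon)=\P_1(\tau_0>\varepsilon/z)$ for every $z>0$.

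\emph{Step 2: Hitting zero.} Because $0<\alpha<2$, the origin is an accessible boundary point for ${\rm BESQ}(\alpha)$, i.e.\ $\P_1(\tau_0<\infty)=1$; see, for instance, \citet[Chapter~XI]{Revuz/Yor:1999}. Consequently $\P_1(\tau_0>t)\downarrow\P_1(\tau_0=\infty)=0$ as $t\uparrow\infty$. Since $\varepsilon/z\to\infty$ as $z\downarrow0$, combining this with Step 1 gives $\lim_{z\downarrow0}\P_z(\tau_0>\varepsilon)=\lim_{z\downarrow0}\P_1(\tau_0>\varepsilon/z)=0$, as desired.

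The argument has no genuine obstacle: the only substantial ingredient is the well-known boundary classification for low-dimensional squared Bessel processes. Should one wish to avoid quoting it, one can instead use the explicit law of the hitting time, $\tau_0\overset{d}{=}z/(2\,\Gamma)$ under $\P_z$ with $\Gamma$ a Gamma variate of shape parameter $1-\alpha/2$, whence $\P_z(\tau_0>\varepsilon)=\P(\Gamma<z/(2\varepsilon))\to0$ as $z\downarrow0$ because $\Gamma>0$ almost surely; the scaling route is preferable only in that it does not require this distributional identity.
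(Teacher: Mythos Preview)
Your proof is correct, but it follows a genuinely different route from the paper. The paper quotes an explicit formula for the density of $\tau_0$ under $\P_z$ from \citet{Going-Jaeschke/Yor:2003}, namely
\[
\P_z(\tau_0>\varepsilon) = \int_\varepsilon^\infty \frac{1}{t\,\Gamma(\widehat\nu)}\left(\frac{z}{2t}\right)^{\widehat\nu} \e^{-z/(2t)}\,\d t, \qquad \widehat\nu=1-\alpha/2,
\]
and then changes variables to $s=z/(2t)$ to obtain $\P_z(\tau_0>\varepsilon)=\Gamma(\widehat\nu)^{-1}\int_0^{z/(2\varepsilon)}s^{\widehat\nu-1}\e^{-s}\,\d s\to 0$. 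This is precisely the ``alternative'' you sketch at the end: the distributional identity $\tau_0\overset{d}{=}z/(2\Gamma)$ with $\Gamma\sim{\rm Gamma}(\widehat\nu,1)$ is exactly what the formula encodes. Your main argument, by contrast, avoids any explicit law for $\tau_0$: the scaling identity $\P_z(\tau_0>\varepsilon)=\P_1(\tau_0>\varepsilon/z)$ reduces everything to the purely qualitative fact $\P_1(\tau_0<\infty)=1$, which is the standard boundary classification for ${\rm BESQ}(\alpha)$ with $\alpha<2$. The scaling route is arguably cleaner and requires less external input; the paper's route gives, in addition, an explicit expression from which a rate of convergence can be read off.
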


\begin{proof}
By \citet[Eq.~(15)]{Going-Jaeschke/Yor:2003}, we have
\[
\P_z(\tau_0>\varepsilon) = \int_\varepsilon^\infty \frac{1}{t\Gamma(\widehat\nu)}\left(\frac{z}{2t}\right)^{\widehat\nu} \e^{-z/(2t)}\d t,
\]
where $\Gamma(\cdot)$ is the Gamma function and $\widehat\nu=1-\alpha/2\in(0,1)$. Changing variable to $s=z/(2t)$ yields $\P_z(\tau_0>\varepsilon)=\frac{1}{\Gamma(\widehat\nu)}\int_0^{z/(2\varepsilon)}s^{\widehat\nu-1}\e^{-s}\d s$, which converges to zero as $z\to0$ by dominated convergence. 
\end{proof}

We may now complete the proof of Theorem~\ref{T:boundary}\ref{T:boundary:3}. The hypotheses yield
\[
0 \le 2\,\Gcal p({\overline x}) < h({\overline x})^\top\nabla p({\overline x}).
\]
Hence there exist some $\delta>0$ such that $2\,\Gcal p({\overline x}) < (1-2\delta) h({\overline x})^\top\nabla p({\overline x})$, and an open ball $U$ in $\R^d$ of radius $\rho>0$, centered at ${\overline x}$, such that
\[
2\,\Gcal p \le \left(1-\delta\right) h^\top\nabla p
\quad\text{and}\quad
h^\top\nabla p >0 \qquad \text{on}\qquad E\cap U.
\]
Note that the radius $\rho$ does not depend on the starting point $X_0$.

For all $t<\tau(U)=\inf\{s\ge0:X_s\notin U\}\wedge T$, we have
\begin{align*}
p(X_t) - p(X_0) - \int_0^t\Gcal p(X_s)\d s
&= \int_0^t \nabla p^\top \sigma(X_s)\d W_s \\
&= \int_0^t \sqrt{\nabla p^\top a\nabla p(X_s)}\d B_s\\
&= 2\int_0^t \sqrt{p(X_s)}\, \frac{1}{2}\sqrt{h^\top \nabla p(X_s)}\d B_s
\end{align*}
for some one-dimensional Brownian motion, possibly defined on an enlargement of the original probability space. Here the equality $a\nabla p =hp$ on $E$ was used in the last step. Define an increasing process $A_t=\int_0^t\frac{1}{4}h^\top\nabla p(X_s)\d s$. Since $h^\top\nabla p(X_t)>0$ on $[0,\tau(U))$, the process $A$ is strictly increasing there. It follows that the time-change $\gamma_u=\inf\{t\ge0:A_t>u\}$ is continuous and strictly increasing on $[0,A_{\tau(U)})$. The time-changed process $Y_u=p(X_{\gamma_u})$ thus satisfies
\[
Y_u = p(X_0) + \int_0^u \frac{4\,\Gcal p(X_{\gamma_v})}{h^\top\nabla p(X_{\gamma_v})}\d v + 2\int_0^u \sqrt{Y_v}\d\beta_v, \qquad u< A_{\tau(U)}.
\]
Consider now the ${\rm BESQ}(2-2\delta)$ process $Z$ defined as the unique strong solution to the equation
\[
Z_u = p(X_0) + (2-2\delta)u + 2\int_0^u \sqrt{Z_v}\d\beta_v.
\]
Since $4\,\Gcal p(X_t) / h^\top\nabla p(X_t) \le 2-2\delta$ for $t<\tau(U)$, a standard comparison theorem implies that $Y_u\le Z_u$ for $u< A_{\tau(U)}$; see for instance \citet[Theorem~V.43.1]{Rogers/Williams:1994}. It is well-known that a BESQ$(\alpha)$ process hits zero if and only if $\alpha<2$; see \citet[page~442]{Revuz/Yor:1999}. It thus remains to exhibit $\varepsilon>0$ such that if $\|X_0-\overline x\|<\varepsilon$ almost surely, there is a positive probability that $Z_u$ hits zero before $X_{\gamma_u}$ leaves $U$, or equivalently, that $Z_u=0$ for some $u<A_{\tau(U)}$. To this end, set $C=\sup_{x\in U} h(x)^\top\nabla p(x)/4$, so that $A_{\tau(U)}\ge C\tau(U)$, and let $\eta>0$ be a number to be determined later. We have
\begin{equation}\label{eq:00111}
 \begin{aligned}
\P\big( \eta < A_{\tau(U)} &\text{ and } \inf_{u\le \eta} Z_u = 0\big) \\
&\ge \P\big( \eta < A_{\tau(U)} \big) - \P\big( \inf_{u\le \eta} Z_u > 0\big) \\
&\ge \P\big( \eta C^{-1} < \tau(U) \big) - \P\big( \inf_{u\le \eta} Z_u > 0\big) \\
&= \P\big( \sup_{t\le \eta C^{-1}} \|X_t - {\overline x}\| <\rho \big) - \P\big( \inf_{u\le \eta} Z_u > 0\big) \\
&\ge \P\big( \sup_{t\le \eta C^{-1}} \|X_t - X_0\| <\rho/2 \big) - \P\big( \inf_{u\le \eta} Z_u > 0\big),
\end{aligned}
\end{equation}
where we recall that $\rho$ is the radius of the open ball $U$, and where the last inequality follows from the triangle inequality provided $\|X_0-{\overline x}\|\le\rho/2$. By Lemma~\ref{L:exitprob} we can choose $\eta>0$ independently of $X_0$ so that $\P( \sup_{t\le \eta C^{-1}} \|X_t - X_0\| <\rho/2 )>1/2$. Then, by Lemma~\ref{L:Bessel}, we have $\P( \inf_{u\le \eta} Z_u > 0)<1/3$ whenever $Z_0=p(X_0)$ is sufficiently close to zero. This happens if $X_0$ is sufficiently close to ${\overline x}$, say within a distance $\rho'>0$. Thus, setting $\varepsilon=\rho'\wedge(\rho/2)$, the condition $\|X_0-{\overline x}\|<\rho'\wedge(\rho/2)$ implies that \eqref{eq:00111} is valid, with the right-hand side strictly positive. The theorem is proved.

\section{Proof of Proposition~\ref{P:quad}}\label{secPquad}
Condition~\ref{G1} is vacuously true, so we prove \ref{G2}. If $d=1$ we have $\{p=0\}=\{-1,1\}$, and it is clear that any univariate polynomial vanishing on this set has $p(x)=1-x^2$ as a factor. Thus \ref{G2} holds. If $d\ge 2$, then $p(x)=1-x^\top Qx$ is irreducible and changes sign, so \ref{G2} follows from Lemma~\ref{L:irred}.

Next, it is straightforward to verify that \eqref{eq:quad:a}--\eqref{eq:quad} imply \ref{T:existence:psd}--\ref{T:existence:q}, so we focus on the converse direction and assume~\ref{T:existence:psd}--\ref{T:existence:q} hold. We first prove that $a(x)$ has the stated form. Write $a(x)=\alpha + L(x) + A(x)$, where $\alpha=a(0)\in\S^d_+$, $L(x)\in\S^d$ is linear in~$x$, and $A(x)\in\S^d$ is homogeneous of degree two in~$x$. Since $a(x)Qx=a(x)\nabla p(x)/2=0$ on $\{p=0\}$, we have for any $x\in\{p=0\}$ and $\epsilon\in\{-1,1\}$,
\[
0 = \epsilon a(\epsilon x) Q x = \epsilon\left( \alpha Qx + A(x)Qx \right) + L(x)Qx.
\]
This implies $L(x)Qx=0$ for all $x\in\{p=0\}$, and thus, by scaling, for all $x\in\R^d$. We now argue that this implies $L=0$. To this end, consider the linear map $T: \Xcal \to \Ycal$ where
\begin{align*}
\Xcal&=\{\text{all linear maps $\R^d\to\S^d$}\}, \\
\Ycal&=\{\text{all second degree homogeneous maps $\R^d\to\R^d$}\},
\end{align*}
and $TK\in\Ycal$ is given by $(TK)(x) = K(x)Qx$. One readily checks that $\dim\Xcal=\dim\Ycal=d^2(d+1)/2$. Thus, if we can show that $T$ is surjective, the rank-nullity theorem implies that $\ker T$ is trivial. But the identity $L(x)Qx\equiv 0$ precisely states that $L\in\ker T$, yielding $L=0$ as desired. To see that $T$ is surjective, note that $\Ycal$ is spanned by elements of the form
\[
(0,\ldots,0,x_ix_j,0,\ldots,0)^\top
\]
with the $k$th component nonzero. Such an element can be realized as $(TK)(x)=K(x)Qx$ as follows: If $i,j,k$ are all distinct, one may take
\[
\begin{pmatrix}
K_{ii}		&	K_{ij}		&K_{ik} \\
K_{ji}		&	K_{jj}		&K_{jk} \\
K_{ki}	&	K_{kj}	&K_{kk}
\end{pmatrix}\!(x)
=
\frac{1}{2} \begin{pmatrix}
0	&-x_k	&x_j \\
-x_k	&0		&x_i \\
x_j	&x_i		&0
\end{pmatrix} \begin{pmatrix}
Q_{ii}&	0		&0 \\
0	&	Q_{jj}	&0 \\
0	&	0		&Q_{kk}
\end{pmatrix},
\]
and all remaining entries of $K(x)$ equal to zero. If $i=k$, one takes $K_{ii}(x)=x_j$ and the remaining entries zero. Similarly if $j=k$. If $i=j\ne k$, one sets
\[
\begin{pmatrix}
K_{ii}		&	K_{ik} \\
K_{ki}	&	K_{kk}
\end{pmatrix}\!(x)
=
\begin{pmatrix}
-x_k	&x_i \\
x_i	&0
\end{pmatrix} \begin{pmatrix}
Q_{ii}&	0 \\
0	&	Q_{kk}
\end{pmatrix},
\]
and the remaining entries zero. This covers all possible cases, and shows that $T$ is surjective. Thus $L=0$ as claimed.

At this point we have shown that $a(x)=\alpha+A(x)$ with $A$ homogeneous of degree two. Next, since $a\,\nabla p=0$ on $\{p=0\}$ there exists a vector $h$ of polynomials such that $a\,\nabla p/2=h\,p$. By counting degree, $h$ is of the form $h(x)=f+Fx$ for some $f\in\R^d$, $F\in\R^{d\times d}$. For any $s>0$ and $x\in\R^d$ such that $sx\in E$,
\[
\alpha Qx + s^2 A(x)Qx = \frac{1}{2s}a(sx)\nabla p(sx) = (1-s^2x^\top Qx)(s^{-1}f + Fx).
\]
By sending $s$ to zero we deduce $f=0$ and $\alpha x=Fx$ for all $x$ in some open set, hence $F=\alpha$. Thus $a(x)Qx=(1-x^\top Qx)\alpha Qx$ for all $x\in E$. Defining $c(x)=a(x) - (1-x^\top Qx)\alpha$, this shows that $c(x)Qx=0$ for all $x\in\R^d$, that $c(0)=0$, and that $c(x)$ has no linear part. In particular, $c$ is homogeneous of degree two. To prove that $c\in\Ccal^Q_+$, it only remains to show that $c(x)$ is positive semidefinite for all $x$. For this we observe that, for any $u\in\R^d$ and any $x\in\{p=0\}$,
\[
u^\top c(x) u = u^\top a(x) u \ge 0.
\]
In view of the homogeneity property, positive semidefiniteness follows for any~$x$. Thus $c\in\Ccal^Q_+$ and hence that $a(x)$ has the stated form. Furthermore, the drift vector is always of the form $b(x)=\beta+Bx$, and a brief calculation using the expressions for $a(x)$ and $b(x)$ shows that the condition $\Gcal p> 0$ on $\{p=0\}$ is equivalent to~\eqref{eq:quad}.

\section{Proof of Proposition~\ref{P:01mn}}\label{secP01mn}
Condition \ref{G1} is vacuously true, and it is not hard to check that \ref{G2} holds.

Next, it is straightforward to verify that {\ref{P:01mn:a}} and {\ref{P:01mn:b}} imply \ref{T:existence:psd}--\ref{T:existence:q}, so we focus on the converse direction and assume~\ref{T:existence:psd}--\ref{T:existence:q} hold.

We first deduce {\ref{P:01mn:a}} from the condition $a\,\nabla p=0$ on $\{p=0\}$ for all $p\in\Pcal$ together with the positive semidefinite requirement of $a(x)$. Taking $p(x)=x_i$, $i=1,\ldots,d$, we obtain $a(x)\nabla p(x) = a(x) e_i = 0$ on $\{x_i=0\}$. Hence the $i$:th column of $a(x)$ is a polynomial multiple of $x_i$. Similarly, with $p=1-x_i$, $i\in I$, it follows that $a(x)e_i$ is a polynomial multiple of $1-x_i$ for $i\in I$. Hence, by symmetry of $a$, we get
\[
\gamma_{ji}x_i(1-x_i) = a_{ji}(x) = a_{ij}(x) = h_{ij}(x)x_j\qquad (i\in I,\ j\in I\cup J)
\]
for some constants $\gamma_{ij}$ and polynomials $h_{ij}\in\Pol_1(E)$ (using also that $\deg a_{ij}\le 2$). For $i\ne j$ this is possible only if $a_{ij}(x)=0$, and for $i=j\in I$ it yields $a_{ii}(x)=\gamma_ix_i(1-x_i)$ as desired. In order to maintain positive semidefiniteness, we necessarily have $\gamma_i\ge 0$.

Now consider $i,j\in J$. By the above, we have $a_{ij}(x)=h_{ij}(x)x_j$ for some $h_{ij}\in\Pol_1(E)$. Similarly as before, symmetry of $a(x)$ yields
\[
h_{ij}(x)x_j = a_{ij}(x) = a_{ji}(x) = h_{ji}(x)x_i,
\]
so that for $i\ne j$, $h_{ij}$ has $x_i$ as a factor. It follows that $a_{ij}(x)=\alpha_{ij}x_ix_j$ for some $\alpha_{ij}\in\R$. If $i=j$, we get $a_{jj}(x)=\alpha_{jj}x_j^2+x_j(\phi_j+\psi_{(j)}^\top x_I + \pi_{(j)}^\top x_J)$ for some $\alpha_{jj}\in\R$, $\phi_j\in\R$, $\psi_{(j)}\in\R^m$, $\pi_{(j)}\in\R^n$ with $\pi_{(j),j}=0$. Positive semidefiniteness requires $a_{jj}(x)\ge 0$ for all $x\in E$. This directly yields $\pi_{(j)}\in\R^n_+$. Further, by setting $x_i=0$ for $i\in J\setminus\{j\}$ and making $x_j>0$ sufficiently small, we see that $\phi_j+\psi_{(j)}^\top x_I\ge0$ is required for all $x_I\in[0,1]^m$, which forces $\phi_j\ge (\psi_{(j)}^-)^\top\oo$. Finally, let $\alpha\in\S^n$ be the matrix with elements $\alpha_{ij}$ for $i,j\in J$, let $\Psi\in\R^{m\times n}$ have columns $\psi_{(j)}$, and $\Pi\in\R^{n\times n}$ have columns $\pi_{(j)}$. We then have
\begin{align*}
s^{-2}\,a_{JJ}(x_I,s x_J) &= \Diag(x_J)\alpha\Diag(x_J) \\
&\qquad  + \Diag(x_J)\Diag(s^{-1}(\phi+\Psi^\top x_I) + \Pi^\top x_J),
\end{align*}
so by sending $s$ to infinity we see that $\alpha + \Diag(\Pi^\top x_J)\Diag(x_J)^{-1}$ must lie in $\S^n_+$ for all $x_J\in\R^n_{++}$. This proves~{\ref{P:01mn:a}}.

For~{\ref{P:01mn:b}}, note that $\Gcal p(x) = b_i(x)$ for $p(x)=x_i$, and $\Gcal p(x)=-b_i(x)$ for $p(x)=1-x_i$. In particular, if $i\in I$, then $b_i(x)$ cannot depend on $x_J$. This establishes~\eqref{eq:P_01mn:b}. Next, for $i\in I$, we have $\beta_i+B_{iI}x_I> 0$ for all $x_I\in[0,1]^m$ with $x_i=0$, and this yields $\beta_i - (B^-_{i,I\setminus\{i\}})\oo > 0$. Similarly, $\beta_i+B_{iI}x_I<0$ for all $x_I\in[0,1]^m$ with $x_i=1$, so that $\beta_i + (B^+_{i,I\setminus\{i\}})\oo + B_{ii}< 0$. For $j\in J$, we may set $x_J=0$ to see that $\beta_J+B_{JI}x_I\in\R^n_{++}$ for all $x_I\in[0,1]^m$. Hence $\beta_j> (B^-_{jI})\oo$ for all $j\in J$. Moreover, fixing $j\in J$, setting $x_j=0$, and letting $x_i\to\infty$ for $i\ne j$ forces $B_{ji}>0$. The proof of~{\ref{P:01mn:b}} is complete.

\section{Proof of Proposition~\ref{P:simplex}}\label{secPsimplex}
Since $\Qcal$ consists of the single polynomial $q(x)=1-\oo^\top x$ it is clear that~\ref{G1} holds. To prove~\ref{G2} it suffices by Lemma~\ref{L:prime} to prove for each~$i$ that the ideal $(x_i, 1-\oo^\top x)$ is prime and has dimension $d-2$. But an affine change of coordinates shows that this is equivalent to same statement for $(x_1,x_2)$, which is well-known to be true.

Next, the only non-trivial aspect of verifying that {\ref{P:01mn:a}} and {\ref{P:01mn:b}} imply \ref{T:existence:psd}--\ref{T:existence:q} is to check that $a(x)$ is positive semidefinite for each $x\in E$. To do this, fix any $x\in E$ and let $\Lambda$ denote the diagonal matrix with $a_{ii}(x)$, $i=1,\ldots,d$ on the diagonal. Then for each $s\in[0,1)$, the matrix $A(s)=(1-s)(\Lambda+\Id)+sa(x)$ is strictly diagonally dominant\footnote{A matrix $A$ is called {\em strictly diagonally dominant} if $|A_{ii}|>\sum_{j\ne i}|A_{ij}|$ for all $i$; see \citet[Definition~6.1.9]{Horn/Johnson:1985}.} with positive diagonal elements. Hence by \citet[Theorem~6.1.10]{Horn/Johnson:1985}, it is positive definite. But since $\S^d_+$ is closed and since $\lim_{s\to1}A(s)=a(x)$, we get $a(x)\in\S^d_+$.

We now focus on the converse direction and assume~\ref{T:existence:psd}--\ref{T:existence:q} hold. We first prove~{\ref{P:simplex:a}}. Since the ideal $(x_i,1-\oo^\top x)$ satisfies \ref{G2} for each $i$, the condition $a(x)e_i=0$ on $M\cap\{x_i=0\}$ implies that
\begin{equation} \label{eq:simplex001}
a_{ji}(x) = x_i h_{ji}(x) + (1-\oo^\top x) g_{ji}(x)
\end{equation}
for some polynomials $h_{ji}$ and $g_{ji}$ in $\Pol_1(\R^d)$. Suppose $j\ne i$. By symmetry of $a(x)$, we get
\[
x_jh_{ij}(x) = x_ih_{ji}(x) + (1-\oo^\top x) (g_{ji}(x) - g_{ij}(x)).
\]
Thus $h_{ij}=0$ on $M\cap\{x_i=0\}\cap\{x_j\ne0\}$, and, by continuity, on $M\cap\{x_i=0\}$. Another application of \ref{G2} and counting degrees gives $h_{ij}(x)=-\alpha_{ij}x_i+(1-\oo^\top x)\gamma_{ij}$ for some constants $\alpha_{ij}$ and $\gamma_{ij}$. This proves $a_{ij}(x)=-\alpha_{ij}x_ix_j$ on $E$ for $i\ne j$, as claimed. For $i=j$, note that \eqref{eq:simplex001} can be written
\[
a_{ii}(x) = -\alpha_{ii}x_i^2 + x_i(\phi_i + \psi_{(i)}^\top x) + (1-\oo^\top x) g_{ii}(x)
\]
for some constants $\alpha_{ij}$, $\phi_i$ and vectors $\psi_{(i)}\in\R^d$ with $\psi_{(i),i}=0$. We need to identify $\phi_i$ and $\psi_{(i)}$. To this end, note that the condition $a(x)\oo=0$ on $\{1-\oo^\top x=0\}$ yields $a(x)\oo=(1-\oo^\top x)f(x)$ for all $x\in\R^d$, where $f$ is some vector of polynomials $f_i\in\Pol_1(\R^d)$. Writing the $i$:th component of $a(x)\oo$ in two ways then yields
\begin{equation} \label{eq:simplex1}
\begin{aligned}
x_i\left( -\sum_{j=1}^d \alpha_{ij}x_j + \phi_i + \psi_{(i)}^\top x\right) &= (1 - \oo^\top x)(f_i(x) - g_{ii}(x)) \\
&= (1 - \oo^\top x)(\eta_i + (\Eta x)_i)
\end{aligned}
\end{equation}
for all $x\in\R^d$ and some $\eta\in\R^d$, $\Eta\in\R^{d\times d}$. Replacing $x$ by $sx$, dividing by $s$, and sending $s$ to zero gives $x_i\phi_i = \lim_{s\to0} s^{-1}\eta_i + (\Eta x)_i$, which forces $\eta_i=0$, $\Eta_{ij}=0$ for $j\ne i$, and $\Eta_{ii}=\phi_i$. Substituting into~\eqref{eq:simplex1} and rearranging yields
\begin{equation} \label{eq:simplex2}
x_i\left(- \sum_{j=1}^d \alpha_{ij}x_j + \psi_{(i)}^\top x + \phi_i \oo^\top x\right) = 0
\end{equation}
for all $x\in\R^d$. The coefficient in front of $x_i^2$ on the left-hand side is $-\alpha_{ii}+\phi_i$ (recall that $\psi_{(i),i}=0$), which therefore is zero. That is, $\phi_i=\alpha_{ii}$. With this in mind, \eqref{eq:simplex2} becomes $x_i \sum_{j\ne i} (-\alpha_{ij}+\psi_{(i),j}+\alpha_{ii})x_j = 0$ for all $x\in\R^d$, which implies $\psi_{(i),j}=\alpha_{ij}-\alpha_{ii}$. At this point we have proved
\begin{align*}
a_{ii}(x) &= -\alpha_{ii}x_i^2 + x_i\left(\alpha_{ii} + \sum_{j\ne i}(\alpha_{ij}-\alpha_{ii})x_j\right) \\
&= \alpha_{ii}x_i(1-\oo^\top x) + \sum_{j\ne i}\alpha_{ij}x_ix_j
\end{align*}
on $E$, which yields the stated form of $a_{ii}(x)$. It remains to show that $\alpha_{ij}\ge 0$ for all $i\ne j$. To see this, suppose for contradiction that $\alpha_{ik}<0$ for some $(i,k)$. Pick $s\in(0,1)$ and set $x_k=s$, $x_j=(1-s)/(d-1)$ for $j\ne k$. Then
\[
a_{ii}(x) = x_i \sum_{j\ne i}\alpha_{ij}x_j = x_i\left(\alpha_{ik}s + \frac{1-s}{d-1}\sum_{j\ne i,k}\alpha_{ij}\right).
\]
For $s$ sufficiently close to $1$ the right-hand side becomes negative, which contradicts positive semidefiniteness of $a$ on $E$. This proves~{\ref{P:simplex:a}}.

For {\ref{P:simplex:b}}, first note that we always have $b(x)=\beta+Bx$ for some $\beta\in\R^d$ and $B\in\R^{d\times d}$. The condition $\Gcal q=0$ on $M$ for $q(x)=1-\oo^\top x$ yields $\beta^\top\oo + x^\top B^\top\oo = 0$ on $M$. Hence by Lemma~\ref{L:irred}, $\beta^\top\oo + x^\top B^\top\oo=\kappa(1-\oo^\top x)$ for all $x\in\R^d$ and some constant $\kappa$. This yields $\beta^\top\oo=\kappa$ and then $B^\top\oo=-\kappa\oo=-(\beta^\top\oo)\oo$. Next, the condition $\Gcal p_i \ge 0$ on $M\cap\{p_i=0\}$ for $p_i(x)=x_i$ can be written
\[
\min \left\{ \beta_i + \textstyle{\sum_{j=1}^d} B_{ji}x_j\ :\ x\in\R^d_+,\ \oo^\top x = \oo,\ x_i=0\right\} \ge 0,
\]
which in turn is equivalent to
\[
\min \left\{ \beta_i + \textstyle{\sum_{j\ne i}} B_{ji}x_j\ :\ x\in\R^d_+,\ \textstyle{\sum_{j\ne i}} x_j=1\right\} \ge 0.
\]
The feasible region of this optimization problem is the convex hull of $\{e_j:j\ne i\}$, and the linear objective function achieves its minimum at one of the extreme points. Thus we obtain $\beta_i+B_{ji} \ge 0$ for all $j\ne i$ and all $i$, as required.

\section{Some notions from algebraic geometry} \label{A:alg}

In this appendix we briefly review some well-known concepts and results from algebra and algebraic geometry. The reader is referred to \cite{Dummit/Foote:2004} and \cite{Bochnak/Coste/Roy:1998} for more details.

An {\em ideal $I$ of $\Pol(\R^d)$} is a subset of $\Pol(\R^d)$ closed under addition such that $f\in I$ and $g\in\Pol(\R^d)$ implies $fg\in I$. Given a finite family $\Rcal=\{r_1,\ldots,r_m\}$ of polynomials, the {\em ideal generated by $\Rcal$}, denoted by $(\Rcal)$ or $(r_1,\ldots,r_m)$, is the ideal consisting of all polynomials of the form $f_1 r_1+\cdots+f_mr_m$, with $f_i\in\Pol(\R^d)$. Given any set of polynomials $S$, its {\em zero set} is the set $\Vcal(S)=\{x\in\R^d:f(x)=0\text{ for all }f\in S\}$. The zero set of the family $\Rcal$ coincides with the zero set of the ideal $I=(\Rcal)$, that is, $\Vcal( \Rcal)=\Vcal(I)$. For example, the set $M$ in~\eqref{eq:M} is the zero set of the ideal~$(\Qcal)$. Given a set $V\subseteq\R^d$, the {\em ideal generated by~$V$}, denoted by $\Ical(V)$, is the set of all polynomials that vanish on $V$. It follows from the definition that $S\subseteq \Ical(\Vcal(S))$ for any set of polynomials $S$.  A basic problem in algebraic geometry is to establish when an ideal $I$ is equal to the ideal generated by the zero set of $I$,
\begin{equation}\label{IIVI}
I = \Ical(\Vcal(I)).
\end{equation}
If the ideal $I=(\Rcal)$ satisfies \eqref{IIVI} then that means that any polynomial $f$ that vanishes on the zero set $\Vcal(I)$ has a representation $f=f_1r_1+\cdots+f_mr_m$ for some polynomials $f_1,\ldots,f_m$.

An ideal $I$ of $\Pol(\R^d)$ is said to be {\em prime} if it is not all of $\Pol(\R^d)$ and if the conditions $f,g\in \Pol(\R^d)$ and $fg\in I$ imply $f\in I$ or $g\in I$. The {\em dimension} of an ideal $I$ of $\Pol(\R^d)$ is the dimension of the quotient ring $\Pol(\R^d)/I$; for a definition of the latter, see \citet[Section~16.1]{Dummit/Foote:2004}.

\bibliographystyle{plainnat}


\bibliography{bibl}

\end{document}